\documentclass{amsart}
\usepackage[utf8]{inputenc}
\usepackage{fourier}
\usepackage{amsmath}
\usepackage{comment}
\usepackage{amssymb}
\usepackage{hyperref}
\hypersetup{
    colorlinks,
    linkcolor={red!70!black},
    citecolor={blue!70!black},
    urlcolor={blue!90!black}
}
\usepackage[english]{babel}

\newcommand{\from}{\colon}
\newcommand{\zero}{\mathbf{0}}
\newcommand{\ud}{\,\mathrm{d}}
\DeclareMathOperator{\TV}{TV}

\DeclareMathOperator{\conv}{conv}
\DeclareMathOperator{\supp}{supp}
\DeclareMathOperator{\diam}{diam}
\DeclareMathOperator{\Lip}{Lip}
\DeclareMathOperator{\id}{id}
\DeclareMathOperator{\vol}{vol}
\DeclareMathOperator{\clos}{clos}
\DeclareMathOperator{\inter}{int}

\DeclareMathOperator{\Gr}{Gr}
\DeclareMathOperator{\FE}{FE}
\DeclareMathOperator{\mass}{mass}
\DeclareMathOperator{\Sym}{Sym}

\newcommand{\cF}{\mathcal{F}}
\newcommand{\cM}{\mathcal{M}}
\newcommand{\cV}{\mathcal{V}}
\newcommand{\R}{\mathbb{R}}
\newcommand{\Q}{\mathbb{Q}}
\newcommand{\N}{\mathbb{N}}
\newcommand{\Z}{\mathbb{Z}}
\newcommand{\bI}{\mathbf{I}}
\newcommand{\bR}{\mathbf{R}}
\newcommand{\one}{\mathbf{1}}
\newcommand{\cH}{\mathcal{H}}
\newcommand{\Cpoly}{C^{\textup{poly}}}

\newtheorem{thm}{Theorem}[section]
\newtheorem{prop}[thm]{Proposition}
\newtheorem{lemma}[thm]{Lemma}

\theoremstyle{remark}
\newtheorem{defn}[thm]{Definition}

\newtheorem{remark}[thm]{Remark}

\usepackage{pgfornament}

\newcommand{\OGr}{\widetilde{\Gr}}
\newcommand{\rstr}{\mathbin{\vrule height 1.4ex depth 0pt width
0.14ex\vrule height 0.14ex depth 0pt width 1ex}}

\newcommand{\smrstr}{\mathbin{\vrule height 1ex depth 0pt width
0.1ex\vrule height 0.1ex depth 0pt width .7ex}}

\title{Construction of fillings with prescribed Gaussian image and applications}

\author{Antonio De Rosa}
\address{Department of Decision Sciences and BIDSA, Bocconi University, Milano, Italy}
\email{antonio.derosa@unibocconi.it}

\author{Yucong Lei}
\address{University of Michigan}
\email{leiyc@umich.edu}

\author{Robert Young}
\address{Courant Institute of Mathematical Sciences, New York University}
\email{ryoung@cims.nyu.edu}

\thanks{Antonio De Rosa was funded by the European Union: the European Research Council (ERC), through StG ``ANGEVA'', project number: 101076411. Views and opinions expressed are however those of the authors only and do not necessarily reflect those of the European Union or the European Research Council. Neither the European Union nor the granting authority can be held responsible for them. Robert Young's work was supported by the National Science Foundation under Grant No.\ DMS-2005609.}
\date{\today}

\begin{document}

\begin{abstract}
  We construct $d$--dimensional polyhedral chains such that the distribution of tangent planes is close to a prescribed measure on the Grassmannian and the chains are either cycles (if the barycenter of the prescribed measure, considered as a measure on $\bigwedge^d \R^n$, is $0$) or their boundary is the boundary of a unit $d$--cube (if the barycenter of the prescribed measure is a simple $d$--vector). 
  Such fillings were first proved to exist by Burago and Ivanov \cite{BurIva}; our work gives an explicit construction, which is also flexible to generalizations. For instance, in the case that the measure on the Grassmannian is supported on the set of positively oriented $d$--planes, we can construct fillings that are Lipschitz multigraphs. We apply this construction to prove the surprising fact that, for anisotropic integrands, polyconvexity is equivalent to quasiconvexity of the associated $Q$-integrands (that is, ellipticity for Lipschitz multigraphs) and to show that strict polyconvexity is necessary for the atomic condition to hold. 
\end{abstract}

\maketitle

\section{Introduction}
Positive functions on the Grassmannian give rise to anisotropic energy functionals on surfaces. That is, for a bounded measurable function $$\Psi\from \OGr(d,n)\to (0,\infty)$$ on the oriented Grassmannian $\OGr(d,n)$, we define the energy of a $d$--dimensional oriented smooth surface $\Sigma \subset \R^n$ with respect to the integrand $\Psi$ as 
$$F_\Psi(\Sigma) = \int_\Sigma \Psi(T_x\Sigma) \ud \cH^d(x),$$
where $T_x\Sigma$ is the oriented tangent plane of $\Sigma$ at $x$ and $\mathcal{H}^d$ denotes the $d$--dimensional Hausdorff measure. We call $F=F_\Psi$ an \emph{anisotropic energy functional}. If 
$\Psi(P)=1$ for all $P\in \OGr(d,n)$, the energy $F_\Psi(\Sigma)$ is the surface area $\mathcal{H}^d(\Sigma)$.

We can extend $F_\Psi$ to the class of rectifiable currents in the natural way. That is, if $A=(E,\theta \omega_P)$ is the rectifiable $d$--current corresponding to a countably $d$--rectifiable set $E\subset \R^n$,  an $\mathcal H^{d}$--measurable, unitary, simple $d$--vector field $\omega_P:E \to \bigwedge^d\R^n$ associated to an orientation $P\from E\to \OGr(d,n)$ of the tangent bundle of $E$, and a multiplicity $\theta\from E\to [0,\infty)$, then 
\begin{equation}\label{energy}
    F_\Psi(A) = \int_E \theta(x)\Psi(P(x)) \ud \cH^d(x).
\end{equation}

Minimizers and critical points of anisotropic energy functionals are key objects of study in geometric measure theory; we refer to \cite{DR} for a survey on the anisotropic minimal surfaces theory, cf. \cite{Almgren68,DePDeRGhi3,DPM,DeRosa,EVA,FIGAL,MooneyYang,SchoenSimonAlm,LSP}.
Their regularity is often related to the ellipticity of $F$, i.e., whether $d$--planes in $\R^n$ are local minimizers of $F$. A $d$--plane $P$ is a local minimizer if and only if the unit $d$-dimensional ball $B\subset P$ minimizes $F$ among all competitors whose boundary is $\partial B$. When stating that $F$ is elliptic, we always need to specify the class of competitors used to test the local minimality of $d$-planes. 

In this paper, we will study how the ellipticity of $F$, and hence the regularity of the minimizers and critical points of $F$, depend on the set of competitors. We are particularly interested in the difference between competitors with multiplicity $1$, such as integral currents or graphs of functions, and competitors with arbitrary multiplicity, such as rectifiable currents with real coefficients or multigraphs.

These differences were studied by Burago and Ivanov in \cite{BurIva}, where they proved the following Theorem \ref{thm:inf-formula} and Theorem \ref{thm:BurIvaNonPoly}. To state these theorems we need to introduce some notation.
Let $\bR^d(\R^n)$ be the set of rectifiable currents with real coefficients and let $\bI^d(\R^n)$ be the set of integral currents. For a cycle $S\in \bI^{d-1}(\R^n)$, we define the filling energy of $S$ with real coefficients by 
$$\FE_F(S;\R) := \inf\left\{F(A) : A\in \bR^d(\R^n), \partial A = S\right\}$$
and with integer coefficients by
$$\FE_F(S;\Z) := \inf\left\{F(A) : A\in \bI^d(\R^n), \partial A = S\right\}.$$
Let $\mathcal{M}(\OGr(d,n))$ be the set of positive Radon measures on $\OGr(d,n)$, and for $P\in \OGr(d,n)$, let $\omega_P\in \bigwedge^d\R^n$ be the unit simple $d$--vector corresponding to $P$. We will frequently view $\OGr(d,n)$ as a set of unit vectors in $\bigwedge^d \R^n$ by identifying $P$ with $\omega_P$. 
\begin{thm}[{\cite[Thm.~3]{BurIva}}]\label{thm:inf-formula}
  Let $\Psi\from \OGr(d,n)\to (0,\infty)$ be a bounded, measurable function. Let $P_0 \in \OGr(d,n)$ and let $S\in \bI^{d-1}(\R^n)$ be the fundamental class of the boundary of a unit $d$--cube in $P_0$. 
  Then
  \begin{multline}\label{eq:inf-formula}
    \FE_F(S;\R) = \lim_{n\to \infty} n^{-1}\FE_F(nS;\Z) \\= \inf \left\{\int \Psi(P)\ud \mu(P) : \mu\in \mathcal{M}(\OGr(d,n)), \int \omega_P \ud \mu(P) = \omega_{P_0}\right\}.
  \end{multline}
  Consequently, $F_\Psi$ is elliptic for rectifiable currents with real coefficients if and only if \(\Psi\) can be extended to a $1$--homogeneous convex function on $\bigwedge^d\R^n$.
\end{thm}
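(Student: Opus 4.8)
The plan is to prove the two equalities in \eqref{eq:inf-formula} by a sandwich argument and then to read off the ellipticity dichotomy from Jensen's inequality. Throughout, write $I(P_0)$ for the infimum on the right‑hand side of \eqref{eq:inf-formula}, and let $Q_0\subset P_0$ be the unit $d$--cube with its orientation, so that $S=\partial Q_0$ and $F(Q_0)=\Psi(P_0)$. I will establish, in order: (i) $\FE_F(S;\R)\ge I(P_0)$ and $n^{-1}\FE_F(nS;\Z)\ge I(P_0)$ for every $n$; (ii) $\FE_F(S;\R)\le n^{-1}\FE_F(nS;\Z)$ for every $n$; (iii) $\limsup_{n\to\infty}n^{-1}\FE_F(nS;\Z)\le I(P_0)$. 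Steps (i) and (ii) are soft; step (iii), the explicit construction of fillings, is the heart of the argument and the contribution of this paper.

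\emph{Steps (i) and (ii).} To $A=(E,\theta\omega_P)\in\bR^d(\R^n)$ associate its Gauss measure $\mu_A:=P_\#\bigl(\theta\,\cH^d\rstr E\bigr)\in\cM(\OGr(d,n))$, a finite Radon measure since $A$ has finite mass, characterized by $\int g\ud\mu_A=\int_E g(P(x))\,\theta(x)\ud\cH^d(x)$ for bounded Borel $g$. Taking $g=\Psi$ gives $\int\Psi\ud\mu_A=F(A)$; pairing $A$ against constant‑coefficient $d$--forms and using Stokes' theorem for the cycle $A-Q_0$ (reducing by slicing to compactly supported competitors) shows that the barycenter $\int\omega_P\ud\mu_A$ depends only on $\partial A$, so if $\partial A=S$ it equals the barycenter attained by $Q_0$ itself, namely $\omega_{P_0}$. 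Hence $\mu_A$ is admissible for $I(P_0)$ and $F(A)=\int\Psi\ud\mu_A\ge I(P_0)$; the infimum over such $A$ gives $\FE_F(S;\R)\ge I(P_0)$, and the same computation with $\partial A=nS$ produces $\mu_A$ of barycenter $n\omega_{P_0}$, so $n^{-1}\mu_A$ is admissible and $n^{-1}F(A)\ge I(P_0)$. Step (ii) is immediate: $\bI^d(\R^n)\subset\bR^d(\R^n)$, and $\partial A=nS$ forces $\partial(n^{-1}A)=S$ with $F(n^{-1}A)=n^{-1}F(A)$.

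\emph{Step (iii), the construction.} Fix $\mu$ with $\int\omega_P\ud\mu=\omega_{P_0}$ and $\int\Psi\ud\mu\le I(P_0)+\varepsilon$. One first reduces to an atomic $\mu=\sum_{i=1}^{k}m_i\delta_{P_i}$, still with $\sum_i m_i\omega_{P_i}=\omega_{P_0}$ (exactly) and now $\sum_i m_i\Psi(P_i)\le I(P_0)+2\varepsilon$; since $\Psi$ is only bounded and measurable this already needs care (Lusin's theorem to replace $\Psi$ off a set of small $\mu$--measure by a continuous function, then discretization plus a small barycenter correction by coordinate simple vectors). The construction then produces, for all large $n$ — after rounding the weights, along an arithmetic progression of $n$, which suffices to bound a $\limsup$ — a polyhedral integral $d$--chain $A_n$ with $\partial A_n=nS$ which, away from an error set of $\cH^d$--measure $o(n)$, is a finite union of flat $d$--polyhedra each having tangent plane some $P_i$, with aggregate area $\approx n\,m_i$. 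The identity $\sum_i m_i\omega_{P_i}=\omega_{P_0}$ is exactly the algebraic compatibility that lets such pieces — suitably thin, interleaved copies of $P_i$--polygons whose total mass vector $\sum_i n\,m_i\,\omega_{P_i}=n\,\omega_{P_0}$ matches that of $nQ_0$ — be glued into a single chain with boundary exactly $nS$; the real difficulty, which is the content of the construction, is to arrange the gluing so that the ``transition'' regions where different tangent planes meet, together with whatever auxiliary chains are inserted to cancel the spurious boundaries they create, carry $\cH^d$--measure negligible compared to $n$, typically via an iterative rescaling scheme. Granting this, $n^{-1}F(A_n)\to\sum_i m_i\Psi(P_i)\le I(P_0)+2\varepsilon$, and $\varepsilon\to0$ gives (iii). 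Together with (i)--(ii), the limit in \eqref{eq:inf-formula} exists and all three quantities equal $I(P_0)$.

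\emph{The ellipticity dichotomy.} By definition $F$ is elliptic for $\bR^d(\R^n)$ iff for every $P_0\in\OGr(d,n)$ the unit $d$--disc (equivalently $Q_0$) is $F$--minimal among rectifiable currents with its boundary, i.e.\ iff $\Psi(P_0)=F(Q_0)\le\FE_F(S;\R)=I(P_0)$ for all $P_0$; since $I(P_0)\le\Psi(P_0)$ always ($\delta_{P_0}$ being admissible), ellipticity is precisely the assertion $I(P_0)=\Psi(P_0)$ for all $P_0$. If $\Psi$ extends to a $1$--homogeneous convex $\overline\Psi$ on $\bigwedge^d\R^n$, then for any admissible $\mu$ of total mass $m$, Jensen's inequality for the probability measure $m^{-1}\mu$ gives $\overline\Psi(m^{-1}\omega_{P_0})\le m^{-1}\int\overline\Psi(\omega_P)\ud\mu=m^{-1}\int\Psi\ud\mu$, and $1$--homogeneity rewrites the left side as $m^{-1}\Psi(P_0)$; hence $\Psi(P_0)\le I(P_0)$ and $F$ is elliptic. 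Conversely, if $F$ is elliptic, define $g(v):=\inf\{\int\Psi\ud\mu:\mu\in\cM(\OGr(d,n)),\ \int\omega_P\ud\mu=v\}$ for $v\in\bigwedge^d\R^n$; since the unit simple $d$--vectors positively span $\bigwedge^d\R^n$ and $\Psi$ is bounded, $g$ is finite everywhere, and it is manifestly positively $1$--homogeneous and subadditive, hence convex (and therefore continuous). As $g(\omega_P)=I(P)=\Psi(P)$ for every $P$, the function $g$ is the desired $1$--homogeneous convex extension of $\Psi$. The only genuine obstacle in the whole argument is step (iii); everything else is standard convex analysis and elementary current theory.
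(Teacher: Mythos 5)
Your steps (i), (ii) and the ellipticity dichotomy are essentially correct and standard: for a finite-mass competitor the barycenter of its Gaussian measure is determined by its boundary (pair with the exact constant-coefficient $d$--forms and cut off at infinity), which gives $\FE_F(S;\R)\ge I(P_0)$ and $n^{-1}\FE_F(nS;\Z)\ge I(P_0)$; Jensen gives ellipticity from polyconvexity; and $g(v)=\inf\{\int\Psi\ud\mu:\int\omega_P\ud\mu=v\}$ is a finite, positively $1$--homogeneous, subadditive (hence convex) extension equal to $\Psi$ on $\OGr(d,n)$ precisely when $F$ is elliptic. The genuine gap is step (iii), which you concede with ``Granting this'': the existence of fillings of $nS$ whose tangent-plane distribution is $\approx n\mu$ up to mass $o(n)$ is not a black box you may invoke --- it is the entire content of the theorem beyond soft convexity, and your sketch supplies no mechanism. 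The identity $\sum_i m_i\omega_{P_i}=\omega_{P_0}$ is an equation between $d$--vectors; it does not by itself tell you how to interleave ``thin copies of $P_i$--polygons'' so that all spurious $(d-1)$--dimensional boundaries created at the interfaces cancel exactly, leaving boundary precisely $nS$, while the transition and correction chains carry mass $o(n)$. Passing from the algebraic barycenter identity to an actual chain with prescribed boundary is the nontrivial step, and ``an iterative rescaling scheme'' is a name, not an argument.

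For comparison: the present paper does not reprove Theorem~\ref{thm:inf-formula} (it is quoted from Burago--Ivanov); its contribution is exactly the construction you are missing, Theorem~\ref{thm:construct}, proved via Proposition~\ref{prop:cycles-real-chains} and Lemma~\ref{lem:rational-approx}. The mechanism is homological rather than combinatorial: after reducing to an atomic measure on rational-slope planes with exact barycenter $\omega_{P_0}$ (your ``small barycenter correction'' is essentially Lemma~\ref{lem:rational-approx}), one works on the torus $\R^n/\Z^n$, where $\sum_i m_iT_{P_i}+T_{-P_0}$ is a cycle with centered Gaussian measure, hence null-homologous, so it bounds a polyhedral $(d+1)$--chain $Q$; lifting to $\R^n$ and intersecting $Q$ with the box $E_{N,M}=[0,N]^d\times[0,M]^{n-d}$ produces a cycle equal to the periodic planes restricted to $E_{N,M}$ plus the error term $\tilde{Q}\cap\partial[E_{N,M}]$, whose mass is of lower order; removing the $-P_0$ sheets and attaching explicit annuli of total mass $O(M^{n-d+1}N^{d-1})$ inside $\partial E_{N,M}$ collects the $\sim M^{n-d}$ parallel translates of $\partial[0,N]^d$ into a single multiple of it, and the choice $N=M^2$ makes every error negligible after rescaling; clearing denominators then yields integral fillings of $nS$ along a sequence of $n$. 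A secondary point your sketch underestimates: with $\Psi$ only bounded and measurable, weak-$*$ or Wasserstein closeness of Gaussian images says nothing about energies; what makes the comparison work is total-variation closeness $\|\gamma_{A}-\mu'\|_{\TV}<\epsilon$ to the atomic approximant, and your Lusin/discretization step must in addition place the atoms at planes the construction can actually use (rational slope) without $\Psi$ jumping up there --- a point that needs an argument rather than a parenthesis.
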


\begin{thm}[{\cite[Thm.~5]{BurIva}}]\label{thm:BurIvaNonPoly}
  There is a continuous $\Psi \from \OGr(2,4) \to (0,\infty)$ such that $F_\Psi$ is elliptic for piecewise smooth surfaces, but not for $\bR^d(\R^n)$. 
\end{thm}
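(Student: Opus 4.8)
The plan is to obtain $\Psi$ by transplanting a quasiconvex but non-polyconvex $2\times 2$ integrand onto $\OGr(2,4)$ and then to prove the two assertions separately. Non-ellipticity for $\bR^2(\R^4)$ will follow directly from Theorem~\ref{thm:inf-formula}; ellipticity for piecewise smooth surfaces will require an analysis of a competitor according to how many sheets it has over a reference plane. The point of the construction is that a piecewise smooth competitor is single-valued \emph{with integer multiplicity}, so it cannot average freely the way a rectifiable current with real coefficients can, and $\Psi$ is engineered so that only the latter can exploit the failure of polyconvexity.

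By Theorem~\ref{thm:inf-formula}, $F$ fails to be elliptic for $\bR^2(\R^4)$ as soon as some plane $P_0$ satisfies $\FE_F(S;\R)<\Psi(P_0)=F(\text{unit square in }P_0)$, where $S$ is the boundary of that square. I would fix $P_0$, write $\R^4=\R^2_x\times\R^2_y$ with $P_0=\R^2_x$ and projection $\pi\from\R^4\to P_0$, and parametrize the open dense set $U\subset\OGr(2,4)$ of planes projecting isomorphically onto $P_0$ by slopes $M\in\R^{2\times 2}$; a short computation gives $\omega_{P_M}=\|v_M\|^{-1}\bigl(e_1\wedge e_2+(\text{linear in }M)+(\det M)\,f_1\wedge f_2\bigr)$ with $\|v_M\|^2=\det(I+M^\top M)$. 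For any continuous $w\from\R^{2\times 2}\to(0,\infty)$, set $\Psi(P_M):=w(M)/\|v_M\|$ on $U$; then $\int_{\mathrm{graph}\,f}\Psi\ud\cH^2=\int w(Df)$, and the density change $\ud\tilde\mu(M)=\|v_M\|\ud\sigma(M)$ converts the constraint $\int\omega_P\ud\mu=\omega_{P_0}$ (for $\mu$ carried by $U$) into ``$\sigma$ is a probability measure with $\int M\ud\sigma=0$ and $\int\det M\ud\sigma=0$'', while $\int\Psi\ud\mu=\int w\ud\sigma$. The resulting infimum is the polyconvex envelope $w^{\mathrm{pc}}(0)$, so Theorem~\ref{thm:inf-formula} gives $\FE_F(S;\R)\le w^{\mathrm{pc}}(0)$. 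I would therefore take $w$ quasiconvex (so that affine maps minimize $g\mapsto\int w(Dg)$ under their own boundary data) but \emph{not} polyconvex at $0$, i.e.\ $w^{\mathrm{pc}}(0)<w(0)$. Since the area integrand $\|v_M\|$ is strictly polyconvex, such a $w$ cannot be a small perturbation of it, so the deformation must be of order one; quasiconvex non-polyconvex $2\times 2$ integrands of this kind are available from (or can be adapted from) the calculus of variations, and can be chosen with $w(M)/\|v_M\|$ in a fixed bounded range. Extending $\Psi=w(M)/\|v_M\|$ from $U$ continuously and positively to all of $\OGr(2,4)$ with $\max\Psi/\min\Psi$ bounded, one gets $\FE_F(S;\R)\le w^{\mathrm{pc}}(0)<w(0)=\Psi(P_0)$, hence $F$ is not elliptic for $\bR^2(\R^4)$.

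For ellipticity among piecewise smooth surfaces one must show, for \emph{every} plane $P_0$, that a unit square $Q\subset P_0$ minimizes $F$ among piecewise smooth $\Sigma$ with $\partial\Sigma=\partial Q$. I would project $\Sigma$ to $P_0$ with $\pi$. Since $\partial(\pi_\#\Sigma)=\partial Q$, for $\cH^2$--a.e.\ $x\in Q$ the number of sheets of $\Sigma$ over $x$ is at least one; write $Q=Q_1\sqcup Q_{\ge 2}$ according to whether it is exactly one or at least two. Over $Q_{\ge 2}$ the area formula gives $\cH^2\bigl(\Sigma\cap\pi^{-1}(Q_{\ge 2})\bigr)\ge 2\,|Q_{\ge 2}|$, so this portion contributes at least $2(\min\Psi)|Q_{\ge 2}|\ge\Psi(P_0)|Q_{\ge 2}|$ once $\max\Psi\le 2\min\Psi$. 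Over $Q_1$, $\Sigma$ is a single-valued graph of a piecewise smooth $f$ with $f=0$ on $\partial Q\cap\partial Q_1$, of energy $\int_{Q_1}w(Df)$; quasiconvexity of $w$, applied after patching the trace of $f$ along the interface with $Q_{\ge 2}$ and along the fold and transition sets against the surplus already gained over $Q_{\ge 2}$, should yield $\int_{Q_1}w(Df)\ge\Psi(P_0)|Q_1|$ up to terms absorbed by that surplus. Summing gives $F(\Sigma)\ge\Psi(P_0)|Q|=F(Q)$.

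The main obstacle is this last step, and it is exactly where the distinction being proved lives: each extra sheet of a piecewise smooth surface costs at least $\min\Psi$ per unit projected area, so, unlike a real-coefficient current (which averages freely and does exploit the non-polyconvexity), extra sheets cannot realize the polyconvex envelope cheaply. Making this rigorous requires (i) a quantitative patching of the quasiconvexity estimate across the graphical/non-graphical interface and the fold locus, compatible with $\Psi$ having only a bounded range; (ii) care about which surfaces count as admissible competitors, in particular about the boundary, since a genuinely two-valued competitor would carry a doubled boundary curve and so the single-valued-at-the-boundary structure must be exploited; and (iii) producing one integrand $\Psi$ that is simultaneously continuous, everywhere positive, of bounded ratio $\max\Psi/\min\Psi$, with $w=\Psi(P_M)\|v_M\|$ quasiconvex, yet not polyconvex. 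Calibrating $\Psi$ to lie strictly between the quasiconvexity that gives ellipticity for single-valued piecewise smooth surfaces and the polyconvexity forbidden by Theorem~\ref{thm:inf-formula}, and securing the $2\times 2$ integrand that makes this possible, is the crux.
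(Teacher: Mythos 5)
First, a point of context: the paper does not prove this statement at all --- it is quoted from Burago--Ivanov \cite{BurIva} --- so the comparison is with their original argument rather than with anything in this text. Your first half is fine and is exactly the intended use of Theorem~\ref{thm:inf-formula}: passing to the graphical chart, the constraint $\int\omega_P\ud\mu=\omega_{P_0}$ becomes ``$\sigma$ is a probability measure with $\int M\ud\sigma=0$, $\int\det M\ud\sigma=0$'', the energy becomes $\int w\ud\sigma$, and a point where the polyconvex envelope of $w$ drops below $w$ gives $\FE_F(S;\R)<\Psi(P_0)$, hence non-ellipticity over $\bR^2(\R^4)$. The genuine gaps are all in the other half, and they are not cosmetic.

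Concretely: (i) your bookkeeping over the multi-sheeted region fails. The surplus you propose to spend is $(2\min\Psi-\Psi(P_0))\,|Q_{\ge 2}|$, which under your own normalization $\max\Psi\le 2\min\Psi$ can be zero, and in any case scales with the \emph{projected} area of $Q_{\ge 2}$; the cost of repairing the boundary data of $f$ on $\partial Q_1\setminus\partial Q$ scales with the length of that interface times how far the trace sits from $P_0$, which is unrelated to $|Q_{\ge 2}|$. A standard competitor exposes this: take the graph of an affine map $x\mapsto Ax$ with $w(A)<w(0)$ over $Q$ and close it up with a nearly vertical collar over a thin neighborhood of $\partial Q$; then $|Q_{\ge2}|$ is arbitrarily small, $\int_{Q_1}w(Df)\approx w(A)|Q|<\Psi(P_0)|Q|$, and your chain of estimates yields no contradiction. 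The true compensation must come from the steepness-weighted energy of the folded part (of order $|A|\cdot\mathrm{perimeter}$), i.e.\ from a quantitative ``parametric'' inequality relating $w(0)-w(A)$ to boundary/collar terms, which quasiconvexity of $w$ alone does not provide; establishing such an inequality for a suitable integrand is precisely the content of Burago--Ivanov's proof, and it is absent here. (Relatedly, the degree argument actually gives $1$ or $\ge 3$ sheets by parity --- the integrality your proof should be exploiting --- whereas ``$\ge2$ with ratio $\le 2$'' leaves no margin at all.) (ii) The existence of the integrand you need is asserted, not proved: Šverák's function has subquadratic growth, so $w(M)/\|v_M\|\to 0$ along directions where $\det M$ dominates; truncating below by $c\|v_M\|$ (a maximum of quasiconvex functions) preserves quasiconvexity but requires re-proving non-polyconvexity, while truncating above by $2c\|v_M\|$ (a minimum) does not preserve quasiconvexity, so ``$w/\|v_M\|$ in a fixed bounded range'' is an open step, and you yourself flag it as the crux. (iii) Ellipticity must be verified for squares in \emph{every} plane, but quasiconvexity of the chart integrand is arranged only in the chart at the distinguished $P_0$; for other planes --- in particular those on which that chart degenerates --- your projection argument needs the graph integrand in the corresponding chart to be quasiconvex, and the continuous positive extension of $\Psi$ off the graphical chart is otherwise unconstrained, so nothing in the proposal rules out a violation of ellipticity there. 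As it stands, the proposal proves the easy implication and correctly locates, but does not close, the hard one.
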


\begin{defn}\label{def:polycon-general}
  If $\Psi\from \OGr(d,n)\to (0,\infty)$ can be extended to a $1$--homogeneous convex function on $\bigwedge^d\R^n$, we say that $\Psi$ is \emph{polyconvex}. If there is a strictly convex extension (i.e., an extension such that $\Psi(v + w) < \Psi(v) + \Psi(w)$ whenever $v$ and $w$ are linearly independent), we call $\Psi$ \emph{strictly polyconvex}.
\end{defn}

We deduce from Definition \ref{def:polycon-general} that, if $\Psi\from \OGr(d,n)\to (0,\infty)$ is strictly  polyconvex, then $\int \Psi(P)\ud \mu(P)\geq  \Psi(P_0)$ for every $P_0 \in \OGr(d,n)$ and every $\mu\in \mathcal{M}(\OGr(d,n))$ satisfying $\int \omega_P \ud \mu(P) = \omega_{P_0}$, with equality if and only if $\mu=\delta_{P_0}$.

\begin{remark}\label{observ}
We observe that, equivalently to Definition \ref{def:polycon-general}, $\Psi$ is (strictly) polyconvex if there is a $1$--homogeneous (strictly) convex function $f \from \R^{{n \choose d}} \to [0,\infty)$ such that $\Psi(P) = f(m_P)$ for all $P\in \OGr(d,n)$, where $m_P\in \R^{{n \choose d}}$ is the vector of $d\times d$ minors of an $n\times d$ matrix $M_P$ whose columns $M_P^j$ are a positively-oriented orthonormal basis of $P$.
    Hence the notion of polyconvexity for $\Psi$ is consistent with the classical notion of polyconvexity for functions defined on $\R^{(n-d)\times d}$, see \cite[Definition 4.2]{Mull}, which we recall later in Definition \ref{polyconvexitygraph}. 
\end{remark}

Burago and Ivanov's proof of Theorem~\ref{thm:inf-formula} is based on a nonconstructive proof of the existence of surfaces with prescribed tangent planes. In this paper, we give an explicit construction of such surfaces (Theorem \ref{thm:construct}) which generalizes to other classes of surfaces. For instance, in the case the tangent planes are assumed to be positively oriented $d$--planes, we can construct Lipschitz multigraphs. We use this construction in Theorems~\ref{thm:poly-multi} and \ref{thm:poly-multi-conseq} to extend Theorems~\ref{thm:inf-formula} and \ref{thm:BurIvaNonPoly} to Lipschitz multigraphs. This lets us show (Theorem~\ref{qcvspc}) that polyconvexity of an anisotropic integrand and quasiconvexity of the associated $Q$--integrands are equivalent. Finally, we  study the atomic condition that was introduced in \cite[Definition 1.1]{DePDeRGhi} and we prove that it implies strict polyconvexity, see Theorem \ref{thm:main-atomic2}. 

\vspace{0.1cm}

\subsection{Constructing surfaces with prescribed tangent planes}
We first recall the notion of a Lipschitz multigraph. A multigraph is the graph of a $Q$--valued function for some $Q>0$. To define such functions, let $\delta_{x_i}$ be the Dirac mass in $x_i\in \mathbb R^{n-d}$ and let
\begin{equation*}
\mathcal{A}_Q(\mathbb{R}^{n-d}) :=\left\{\sum_{i=1}^Q\delta_{x_i}\,:\,x_i\in\mathbb R^{n-d}\;\textrm{for every  }i=1,\ldots,Q\right\},
\end{equation*}
equipped with the Wasserstein distance.
We can identify $\mathcal{A}_Q(\R^{n-d})$ with the configuration space $(\R^{n-d})^Q/\Sym(Q)$.
A \emph{$Q$--valued function} is then a map from an open set $U\subset \R^d$ to $\mathcal{A}_Q(\mathbb{R}^{n-d})$; a \emph{Lipschitz $Q$--valued function} is a Lipschitz map from $U$ to $\mathcal{A}_Q(\mathbb{R}^{n-d})$.

A \emph{multigraph} is the graph $\Lambda_f$ of a $Q$--valued function $f\from U\to \mathcal{A}_Q(\mathbb{R}^{n-d})$, defined by
$$\Lambda_f = \{(x,y)\in U\times \R^{n-d}, y \in \supp f(x)\};$$
note that for all $x$, $\supp f(x)$ has cardinality at most $Q$. A \emph{Lipschitz multigraph} is the graph of a Lipschitz $Q$--valued function, and such a graph naturally supports an integral current $[\Lambda_f]\in \bI^d(\R^n)$; see \cite[Definition 1.10]{DLSp2}.

For every oriented $d$-dimensional rectifiable surface $\Sigma$ in $\R^n$ we denote with $[\Sigma]:=(\Sigma, \omega_{T_x\Sigma})\in \bI^d(\R^n)$ the fundamental class of $\Sigma$, i.e., the multiplicity-one integral current associated to $\Sigma$.
For a rectifiable $d$--current $A=(E, \theta \omega_P)\in \bR^d(\R^n)$, we define \emph{the weighted Gaussian image} $\gamma_A\in \cM(\OGr(d,n))$ of $A$ as the pushforward $P_*(\theta \cH^d \rstr E)$ of the measure $\theta \cH^d \rstr E$ under the orientation field $P:E \to \OGr(d,n)$.  For example, if  $A=\sum_{i=1}^k a_i [\Delta_i]$ is a polyhedral $d$--chain in $\R^n$ such that $a_i\geq 0$, the $\Delta_i$'s are oriented $d$--dimensional polyhedra with corresponding orientations $P_i\in \OGr(d,n)$, and $\cH^d(\Delta_i\cap \Delta_j)=0$ for all $i\ne j$, then $\gamma_A=\sum_{i=1}^k a_i \cH^d(\Delta_i) \delta_{P_i}$.

Let $p\from \OGr(d,n)\to \Gr(d,n)$ denote the map that forgets orientation. For every rectifiable $d$--current $A=(E, \theta \omega_P)\in \bR^d(\R^n)$ we will denote the corresponding varifold by $\cV(A):=(\cH^d \rstr E )\otimes \theta \delta_{p(P(x))}$.

Our first main theorem constructs surfaces with prescribed tangent planes:
\begin{thm}\label{thm:construct}
  Let $P_0 \in \OGr(d,n)$, $\mu \in \mathcal{M}(\OGr(d,n))$, and $D$ be an oriented unit $d$-cube in $P_0$. 
  \begin{itemize}
  \item If $\int \omega_P \ud \mu(P) = 0$, then there is a sequence $\Sigma_i$ of polyhedral chains such that $\partial \Sigma_i = 0$ for all $i$, $\gamma_{\Sigma_i}$ converges weakly to $\mu$, $\supp \Sigma_i$ converges to $[0,1]^n$ in the Hausdorff metric, and the corresponding varifolds $\cV(\Sigma_i)$ converge to the varifold $(\cH^n\rstr [0,1]^n)\otimes p_*\mu$, where $p_*\mu$ denotes the pushforward of the measure $\mu$ under the map $p$.
  \item 
  If $\int \omega_P \ud \mu(P) = \omega_{P_0}$, then there is a sequence $\Sigma_i$ of polyhedral chains such that $\partial \Sigma_i =\partial [D]$ for all $i$, $\gamma_{\Sigma_i}$ converges weakly to $\mu$, $\supp \Sigma_i$ converges to $D$ in the Hausdorff metric, and the corresponding varifolds $\cV(\Sigma_i)$ converge to the varifold $(\cH^d\rstr D)\otimes p_*\mu$.
  \item    
  If $\int \omega_P \ud \mu(P) = \omega_{P_0}$ and $\supp \mu \subset \OGr^+(d,n)$, where $\OGr^+(d,n)$ denotes the subset of positively oriented $d$-planes with respect to $P_0$, then there is a sequence of currents $\Sigma_i$, each corresponding to a multiple of a Lipschitz multigraph over $P_0$, such that $\partial \Sigma_i =\partial [D]$ for all $i$, $\gamma_{\Sigma_i}$ converges weakly to $\mu$, $\supp \Sigma_i$ converges to $D$ in the Hausdorff metric, and the corresponding varifolds $\cV(\Sigma_i)$ converge to the varifold $(\cH^d\rstr D)\otimes p_*\mu$.
  \end{itemize}
\end{thm}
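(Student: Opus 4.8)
The plan is to assemble everything from one \emph{elementary oscillation}. Over the unit cube in $P_0=\R^d\times\{\zero\}$, consider the Lipschitz graph of $g_h(x)=h\min_{1\le i\le d}(x_i,1-x_i)$: it agrees with the flat cube on $\partial[0,1]^d$, so the associated current still has boundary $\partial[D]$; it has $2d$ flat pieces, the piece over the region where $x_i$ (resp.\ $1-x_i$) is the minimum being $P_0$ tilted by $+h$ (resp.\ $-h$) in the $e_i$-direction; each piece carries area $\tfrac{1}{2d}\sqrt{1+h^2}$; and because the $+h$ and $-h$ tilts cancel, its weighted Gaussian image $\nu_h=\tfrac{1}{2d}\sqrt{1+h^2}\sum_{i}(\delta_{Q_i^+}+\delta_{Q_i^-})$ satisfies $\int\omega_P\ud\nu_h(P)=\omega_{P_0}$ while $\mass=\sqrt{1+h^2}>1$. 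Rotating, the same works over a flat cube parallel to any $d$-plane $Q$; an affine change of coordinates in the target $\R^{n-d}$, or a skew choice of apex, yields a family of oscillations with tilt directions and (possibly unequal) face weights prescribed; and when $Q\in\OGr^+(d,n)$ all these graphs stay in $\OGr^+(d,n)$. For the centered case the analogous building block is the boundary of a small $(d+1)$-polytope, a polyhedral $d$-cycle whose weighted Gaussian image automatically has barycenter $\zero$.

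Next I iterate and localize. Starting from $[D]$ (Gaussian image $\delta_{P_0}$), subdivide $D$ into $M^d$ congruent subcubes and place a $(1/M)$-scaled oscillation over each; the slopes, hence the tangent planes, are unchanged, the boundary stays $\partial[D]$, the support moves into an $O(1/M)$-neighborhood of $D$, and since every subcube carries the same oscillation the associated varifold converges as $M\to\infty$ to $(\cH^d\rstr D)\otimes p_\sharp\nu_h$. Repeating this on each tiny flat face of the resulting chain — re-subdividing finely at every stage and choosing the tilt directions and weights adaptively — produces, after finitely many stages, polyhedral chains with boundary $\partial[D]$, support in an arbitrarily small neighborhood of $D$, weighted Gaussian image equal to any measure in the set $\mathcal{L}$ of ``iterated laminates'' reachable this way, and varifolds that equidistribute over $D$; the total mass stays controlled because it only gets multiplied by $\prod_k\sqrt{1+h_k^2}$, which remains bounded even when the supports of the Gaussian images random-walk far from $P_0$. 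The same bookkeeping run inside $[0,1]^n$, using the small closed polyhedral bumps as building blocks, produces polyhedral \emph{cycles} whose varifolds converge to $(\cH^n\rstr[0,1]^n)\otimes p_\sharp\nu$.

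It remains to identify $\overline{\mathcal{L}}$. First I reduce an arbitrary target to a finitely supported one with the \emph{exact} barycenter: partition $\OGr(d,n)$ (or $\supp\mu$, in the $\OGr^+$ case, which is a compact subset of the open set $\OGr^+(d,n)$) into small Borel cells $B_1,\dots,B_N$, and replace $\mu\rstr B_i$ by an atomic measure of mass $\mu(B_i)$ with at most $\binom{n}{d}+1$ atoms in $\clos B_i$ (Carathéodory), matching the barycenter $\int_{B_i}\omega_P\ud\mu(P)$ of $\mu\rstr B_i$ exactly; summing, $\mu_j:=\sum_i(\cdots)$ is finitely supported, converges weakly to $\mu$, keeps the global barycenter ($\omega_{P_0}$, resp.\ $\zero$), and stays supported in $\OGr^+(d,n)$ when $\mu$ is. Second — and this is the heart of the proof — I must show every finitely supported measure with the prescribed barycenter lies in $\overline{\mathcal{L}}$ (resp.\ in $\overline{\mathcal{L}}\cap\{\supp\subset\OGr^+\}$). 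Here one uses that a single elementary oscillation replaces $\delta_Q$ on a cell by a measure diffusing a controlled fraction of the mass into arbitrary directions near $Q$ while preserving its barycenter at $\omega_Q$; iterating this and mixing across cells, one steers the Gaussian image into every weak-$*$ neighborhood of the target — a reachability statement that I expect to package dually (a linear functional separating $\mathcal{L}$ from some admissible $\mu'$ would force the rigidity excluded by the polyconvex duality underlying Definition~\ref{def:polycon-general}). For the multigraph statement one must additionally keep every stage realized by a Lipschitz $Q$-valued graph over $P_0$: the small-tilt oscillations are single-valued graphs in $\OGr^+(d,n)$, while targets charging planes not rank-one connected to the ambient sheet are reached by inserting ``fold'' building blocks (flat pieces parallel to the prescribed planes joined to the sheet by thin, nearly-vertical bridges of negligible area), and the sheets are merged to the flat boundary datum in a thin collar near $\partial D$ so that $\partial[\Lambda_f]=\partial[D]$.

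The main obstacle is precisely this second step — proving that iterated elementary oscillations exhaust the affine constraint set — compounded, in the third bullet, by the need to carry it out without ever leaving $\OGr^+(d,n)$ and while keeping the chain a genuine Lipschitz multigraph over $P_0$, which is what forces the introduction of the fold-type blocks and the collar argument.
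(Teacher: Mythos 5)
Your strategy (local tent--type oscillations, iterated over finer and finer subdivisions, then a duality argument to show the reachable set of Gaussian images exhausts the barycenter constraint) is genuinely different from the paper's, but the step you yourself flag as ``the heart of the proof'' is exactly where it breaks. Iterated oscillations of a single-valued graph produce laminates: their weak-$*$ limits satisfy Jensen's inequality not just for polyconvex integrands but for all rank-one convex (indeed quasiconvex) integrands. By \v{S}ver\'ak's example (used in Theorem~\ref{thm:poly-multi-conseq}) there is a quasiconvex, non-polyconvex $\psi$, hence a finitely supported $\mu'$ with $\int \omega_P \ud\mu'(P)=\omega_{P_0}$ but $\int \psi\ud\mu' < \psi(0)$; such a $\mu'$ cannot lie in the closure of your set $\mathcal{L}$ of single-valued oscillation images. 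So the Hahn--Banach argument you sketch cannot work as stated: the dual cone of $\mathcal{L}$ is (at least) the rank-one convex functions, not the polyconvex ones, and the ``rigidity excluded by polyconvex duality'' is precisely not excluded here --- indeed the gap between the two dual classes is the whole point of Theorems~\ref{thm:BurIvaNonPoly} and~\ref{thm:poly-multi-conseq}. Everything therefore rests on the ``fold'' building blocks, which are only described in one sentence: you do not explain how an extra sheet parallel to a prescribed plane is attached while keeping the object an honest Lipschitz $Q$--valued graph with constant $Q$ (sheets cannot appear or disappear over a compact set without changing the multiplicity count), how the bridge pieces are kept both positively oriented and of negligible mass, how the boundary remains $\partial[D]$, nor how these insertions are combined to hit arbitrary prescribed weights $m_1,\dots,m_k$ simultaneously. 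A similar issue affects your first bullet: boundaries of small $(d+1)$--polytopes give a very restricted family of centered Gaussian images, and realizing an arbitrary centered $\mu$ from them is again the unproved reachability claim. The mass bound ``$\prod_k\sqrt{1+h_k^2}$ stays bounded'' is also unjustified once the oscillations must carry mass to planes far from $P_0$.

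For comparison, the paper avoids any reachability-by-local-moves argument. It first reduces to $\mu=\sum m_i\delta_{P_i}$ with rational-slope atoms and exactly proportional barycenter (Lemma~\ref{lem:rational-approx}, where the positively-oriented case needs the separate fact that $\omega_{P_0}$ is interior to the convex hull of positively oriented simple vectors). It then realizes $\mu$ globally: the chain $S=\sum m_i T_{P_i}$ (plus $-T_{P_0}$ in the filling case) is a null-homologous cycle in $\R^n/\Z^n$, one fills it by a $(d+1)$--chain $Q$, lifts to $\R^n$, and intersects with large boxes, so that the interior part has Gaussian image an exact multiple of $\mu$ and the boundary term $\tilde Q\cap\partial[E]$ is of lower order; rescaling gives all three bullets, and for the multigraph case the negatively oriented pieces ($-\tilde T_{P_0}$ and the boundary junk) are cancelled by explicitly constructed covering Lipschitz graphs (Lemmas~\ref{lem:extend-graphs} and~\ref{lem:cover-XR}). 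Without a proof of your reachability step --- which, for single-valued oscillations, is provably false --- the proposal does not establish the theorem.
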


In the following, we prove two main consequences of Theorem~\ref{thm:construct}. 

\subsection{Equivalence of polyconvexity and ellipticity for Lipschitz multigraphs} As a first application of Theorem~\ref{thm:construct}, we generalize Burago and Ivanov's Theorem \ref{thm:inf-formula} to multigraphs. To state this generalization, we need to define energy and ellipticity for multigraphs.

Given $\psi\from \R^{(n-d)\times d}\to [0,\infty)$ and $Q\in \N$, we define the $\psi$--energy of a $Q$--valued function as follows. For $\{X_i\}_{i=1}^Q\subset \R^{(n-d)\times d}$, let $\bar{\psi}\from (\R^{(n-d)\times d})^Q\to [0,\infty)$ be defined as 
  \begin{equation}\label{integ}
      \bar{\psi}(X_1, \ldots, X_Q):=\sum_{i=1}^Q \psi(X_i).
  \end{equation}
  Clearly the definition of $\bar{\psi}$ depends on $Q$, but we will always omit this dependence for notation simplicity.
  Since $\bar{\psi}$ is invariant under permutations of $\{1,\ldots,Q\}$, it is a $Q$--integrand in the sense of \cite[Section 0.1]{DeLellisFocardiSpadaro}. So we can define
  \begin{equation}\label{functional}
    F_\psi(f)=\int_B \bar{\psi}\big(Df(x)\big)dx
  \end{equation}
  for any weakly differentiable $Q$--valued map $f\from B\subset \R^d \to \mathcal{A}_Q(\mathbb{R}^{n-d})$.

We observe below that $F_\psi(f)$ corresponds closely to the energy of $[\Lambda_f]$, cf. \cite[Equations 2.5-2.7]{DRT}, \cite[Corollary 1.11]{DLSp2} and \cite[Section 5]{T3}. To this aim we introduce the following definition.
\begin{defn}\label{def:matrix-polycon}
  For a matrix $X\in \R^{(n-d)\times d}$, let
  $$M(X):=  \begin{pmatrix} I_d\\ X \end{pmatrix}\in \R^{n\times d}$$
  and let $\bigwedge M(X) = w_1(X)\wedge\dots\wedge w_d(X)$, where $w_1(X),\dots, w_d(X)$ are the columns of $M(X)$. This gives a map $\bigwedge M\from \R^{(n-d)\times d}\to \bigwedge^d\R^n$, and one can calculate that the coordinates of $\bigwedge M(X)$ in the standard basis of $\bigwedge^d\R^n$ are the minors of $X$ of any order, which in turn coincide with the $d\times d$ minors of $M(X)$.
\end{defn}
If $\Psi\from \bigwedge^d\R^n\to [0,\infty)$ is a $1$--homogeneous function, we can think of the restriction of $\Psi$ to the unit simple $d$--vectors as an energy on $\OGr(d,n)$. If we let $\psi\from \R^{(n-d)\times d}\to [0,\infty)$, $\psi(X) = \Psi(\bigwedge M(X))$
for all $X$, then by the area formula for $Q$--valued graphs \cite[Corollary 1.11]{DLSp2} we have $F_\psi(f) = F_{\Psi}([\Lambda_f])$ for any Lipschitz $Q$--valued function $f\from U\subset \R^d\to \mathcal{A}_Q(\mathbb{R}^{n-d})$, as we claimed. 

We now recall the classical notion of polyconvexity for an integrand $\psi\from \R^{(n-d)\times d}\to [0,\infty)$, see \cite[Definition 4.2]{Mull}. As we observed in Remark \ref{observ} and from the discussion above, this is consistent with the notion of polyconvexity for integrands on the oriented Grassmannian in Definition \ref{def:polycon-general}.
\begin{defn}\label{polyconvexitygraph}
  Let $(\bigwedge^d \R^n)^+\subset \bigwedge^d\R^n$ be the subset of $d$--vectors whose projection to $\R^d\subset \R^n$ is positively oriented.   
  We say that $\psi\from \R^{(n-d)\times d}\to [0,\infty)$ is \emph{polyconvex} if there is a $1$--homogeneous convex function $\Psi\from (\bigwedge^d\R^n)^+\to \R$ such that $\psi = \Psi \circ \bigwedge M$, i.e., if $\psi$ can be written as a convex function of the minors of $X$.
\end{defn}

Under some additional assumptions, it can be shown that polyconvexity allows to obtain good regularity properties for stationary graphs \cite{T1,T2}. We will show the following analogue of Theorem~\ref{thm:inf-formula}.
\begin{thm}\label{thm:poly-multi}
 Let $\psi\in C^0(\R^{(n-d)\times d},[0,\infty))$. Then $F_\psi$ is elliptic on Lipschitz multivalued functions if and only if $\psi$ is polyconvex.
\end{thm}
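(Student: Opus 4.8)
The plan is to prove the two implications of Theorem~\ref{thm:poly-multi} separately, using Theorem~\ref{thm:construct} for the harder direction. Recall that $F_\psi$ is \emph{elliptic on Lipschitz multivalued functions} means that for every $d$-plane $P_0$ and every Lipschitz $Q$-valued competitor $f$ over a domain $B\subset P_0$ whose graph has the same boundary as a flat $d$-disk $D$, we have $F_\psi(f)\ge F_\psi(\text{affine function with graph }D)$; by the area-formula identity $F_\psi(f)=F_\Psi([\Lambda_f])$ recorded above, this is the same as saying the flat disk minimizes $F_\Psi$ among currents induced by Lipschitz multigraphs over $P_0$ with the prescribed boundary.

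First I would treat the implication \emph{polyconvex $\Rightarrow$ elliptic}. If $\psi=\Psi\circ\bigwedge M$ with $\Psi$ a $1$-homogeneous convex function on $(\bigwedge^d\R^n)^+$, then for any Lipschitz $Q$-valued $f\from B\to\mathcal A_Q(\R^{n-d})$ with $[\Lambda_f]$ and $[D]$ sharing the same boundary, Jensen's inequality applied to the convex function $\Psi$ and the (vector-valued) average of $\bigwedge M(Df(x))$ over $B$ gives a lower bound in terms of $\Psi$ evaluated at the average $d$-vector. The key point is that, because $\partial[\Lambda_f]=\partial[D]$, the average of the simple $d$-vectors $\bigwedge M(Df_j(x))$ (summed over the $Q$ sheets and integrated over $B$) equals $|D|\,\omega_{P_0}$ — this is Stokes'/the constancy-type computation that the net projected area is determined by the boundary. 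Then $1$-homogeneity and convexity give $F_\psi(f)=\int_B\sum_j\Psi(\bigwedge M(Df_j))\ge |B|\,\Psi(\omega_{P_0})\cdot(\text{count})=F_\psi(\text{affine})$, i.e. ellipticity. This direction is essentially a standard polyconvexity-implies-quasiconvexity argument adapted to the multivalued setting, and I expect it to be routine.

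The substantive direction is \emph{elliptic $\Rightarrow$ polyconvex}, and here I would argue by contrapositive using the third bullet of Theorem~\ref{thm:construct}. Suppose $\psi$ is not polyconvex. Equivalently, setting $\Psi(v):=\psi(X)$ for $v=\bigwedge M(X)\in(\bigwedge^d\R^n)^+$ and extending $1$-homogeneously, $\Psi$ is not convex on the cone over the positively-oriented Grassmannian; by the characterization of convexity via the lower convex envelope there exist finitely many positively oriented simple $d$-vectors $\omega_{P_1},\dots,\omega_{P_k}$ and weights $\lambda_i>0$ with $\sum_i\lambda_i\omega_{P_i}=\omega_{P_0}$ (after normalizing so the barycenter is the unit $d$-vector of some positively oriented $P_0$, and rescaling using $1$-homogeneity) such that $\sum_i\lambda_i\Psi(\omega_{P_i})<\Psi(\omega_{P_0})$. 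Equivalently, writing $\mu=\sum_i\lambda_i\delta_{P_i}\in\mathcal M(\OGr^+(d,n))$, we have $\int\omega_P\,d\mu(P)=\omega_{P_0}$ but $\int\Psi(P)\,d\mu(P)<\Psi(P_0)$. Apply the third bullet of Theorem~\ref{thm:construct} with this $\mu$ and with $D$ a unit $d$-cube in $P_0$: we obtain currents $\Sigma_i$, each the current of a Lipschitz multigraph over $P_0$, with $\partial\Sigma_i=\partial[D]$ and $\gamma_{\Sigma_i}\rightharpoonup\mu$. Since $\Psi$ (hence $\psi$) is continuous and the masses $\|\gamma_{\Sigma_i}\|$ are uniformly bounded (they converge to $\|\mu\|=\sum\lambda_i<\infty$), weak convergence gives $F_\Psi(\Sigma_i)=\int\Psi(P)\,d\gamma_{\Sigma_i}(P)\to\int\Psi(P)\,d\mu(P)<\Psi(P_0)=F_\Psi([D])=F_\psi(\text{affine})$. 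Hence for $i$ large, the Lipschitz multigraph underlying $\Sigma_i$ beats the flat disk $D$ with the same boundary, so $F_\psi$ is not elliptic on Lipschitz multivalued functions.

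The main obstacle, and the place requiring the most care, is the reduction in the contrapositive from ``not polyconvex'' to the existence of a \emph{finitely supported} measure $\mu$ concentrated on $\OGr^+(d,n)$ with the strict inequality and the barycenter constraint. One must (i) use $1$-homogeneity correctly to pass between ``convex extension to the cone $(\bigwedge^d\R^n)^+$'' and ``Jensen inequality holds for probability measures on $\OGr^+(d,n)$,'' (ii) verify that the failure of convexity can be witnessed by a measure whose support stays strictly inside the open set $\OGr^+(d,n)$ (so that Theorem~\ref{thm:construct}'s hypothesis $\supp\mu\subset\OGr^+(d,n)$ is met) — this uses that $(\bigwedge^d\R^n)^+$ is open and that $\psi$ is defined and continuous on all of $\R^{(n-d)\times d}$, so a non-convexity in the interior can be localized there — and (iii) keep track of the barycenter normalization: a witnessing pair of points with barycenter some $v\ne\omega_{P_0}$ in the positive cone can be rescaled (using $1$-homogeneity of both sides) so that the barycenter becomes a unit simple $d$-vector, which then plays the role of $P_0$. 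Once this finite, positively-oriented, barycenter-normalized witness is in hand, Theorem~\ref{thm:construct} does all the geometric work and the conclusion follows by the soft continuity/weak-convergence argument above. A secondary point to check is that $\gamma_{\Sigma_i}$ has no escape of mass and that one may indeed integrate the continuous bounded function $\Psi$ against the weakly convergent measures — this follows since the supports $\supp\Sigma_i\to D$ are eventually contained in a fixed compact set and the total masses converge.
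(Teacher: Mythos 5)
Your overall strategy is the paper's: the ``polyconvex $\Rightarrow$ elliptic'' direction via Stokes plus Jensen for the $1$--homogeneous convex extension, and the converse via a finitely supported witness of non-polyconvexity fed into the third bullet of Theorem~\ref{thm:construct}, followed by continuity of the integrand and convergence of the weighted Gaussian images. But there is a genuine gap in your contrapositive reduction, exactly at the step you flag as the main obstacle. The witness planes you extract from non-convexity of $\Psi$ on $(\bigwedge^d\R^n)^+$ are of the form $\bigwedge M(X_i)$, hence positively oriented with respect to the coordinate plane $\R^d$; but the third bullet of Theorem~\ref{thm:construct} requires $\supp\mu\subset\OGr^+(d,n)$, i.e.\ positive orientation \emph{with respect to $P_0$}, the barycenter plane. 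The barycenter identity $\sum_i\lambda_i\omega_{P_i}=\omega_{P_0}$ only yields $\sum_i\lambda_i\langle\omega_{P_i},\omega_{P_0}\rangle>0$, not positivity of each term, so the hypothesis of the theorem is not verified as you set things up. Moreover, even granting it, the theorem produces a Lipschitz multigraph \emph{over $P_0$}, which is not an admissible competitor for ellipticity of $F_\psi$ as defined in \eqref{functional}: that notion compares $Q$--valued functions over domains in $\R^d$ with affine functions, and a multigraph over $P_0$ whose tangent planes are positively oriented with respect to $P_0$ need not be a graph over $\R^d$. Relatedly, your claim that a witness ``can be rescaled so that the barycenter becomes a unit simple $d$--vector'' is not right as stated: rescaling does not turn a non-simple vector into a simple one. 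What is true (and what the paper uses) is that failure of polyconvexity is witnessed at a point of the form $\bigwedge M(X_0)$, which is automatically simple, and one then normalizes its length.

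The paper closes all of these issues with one normalization you are missing: it assumes without loss of generality that $X_0=0$, which amounts to conjugating by the shear $(x,y)\mapsto(x,y-X_0x)$, i.e.\ replacing $\psi(X)$ by $\psi(X+X_0)$; this preserves both polyconvexity and ellipticity (the shear acts sheet by sheet on $Q$--valued graphs and fixes the $e_1\wedge\dots\wedge e_d$--component on $\bigwedge^d\R^n$). After this, $P_0$ \emph{is} the coordinate plane, the witness planes $P_i$ are automatically positively oriented with respect to $P_0$, and the third bullet of Theorem~\ref{thm:construct} (via Proposition~\ref{prop:multigraph-fillings}) produces honest Lipschitz $Q$--valued functions $u_\epsilon$ over the unit cube in $\R^d$, vanishing on the boundary, with $F_\psi(u_\epsilon)<F_\psi(0)$ for small $\epsilon$, contradicting ellipticity. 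With this shear inserted, your argument coincides with the paper's proof; the final limit passage $\int\Psi\,\ud\gamma_{[\Lambda_{u_\epsilon}]}\to\int\Psi\,\ud\mu$ is handled in the paper at the same level of detail as in your sketch, using the total-variation closeness provided by the construction.
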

We remark that Theorem~\ref{thm:poly-multi} could be phrased in terms of the quasiconvexity of the $Q$-integrand $\bar{\psi}$, as defined by De Lellis, Focardi and Spadaro in \cite[Definition 0.1]{DeLellisFocardiSpadaro}:
\begin{defn}
[\cite{DeLellisFocardiSpadaro}]\label{qc}
Let $\bar{\psi}: \left(\R^{(n-d)\times d}\right)^Q \to [0,\infty)$ be a locally bounded $Q$--integrand.
We say that $\bar{\psi}$ is \textit{quasiconvex} if the following holds
for every affine $Q$--valued function
$u (x)= \sum_{j=1}^J Q_j \delta_{a_j + L_j x}$, with $x\in [0,1]^d$, $L_j\in \R^{(n-d)\times d}$, $a_j\in \R^{n-d}$, and $Q= \sum_{j=1}^J Q_j$.
Given any collection of Lipschitz $Q_j$--valued functions $f^j\from [0,1]^d\to \mathcal{A}_{Q_j}(\mathbb{R}^{n-d})$
with $f^j|_{\partial [0,1]^d}(x) = Q_j \delta_{a_j +L_jx}$, we have the inequality
$$
\bar{\psi}\big(Du(0)\big)\leq \int_{[0,1]^d} \bar{\psi}\big(Df^1(x), \ldots, Df^J(x)\big)dx.
$$
\end{defn}

\begin{remark}
    We point out that the class of $Q$--integrands considered by De Lellis, Focardi and Spadaro in \cite{DeLellisFocardiSpadaro} is wider than \eqref{integ}, as it contains all symmetric functions of the values of the gradient. We only focus on the $Q$--integrands $\bar{\psi}$ defined in \eqref{integ}, as they recover the anisotropic energy of the multigraphs.  
\end{remark}

\begin{remark}
    Quasiconvexity is a well known condition in the nonlinear analysis literature as it is a sufficient condition for the lower semicontinuity of the energy. We refer the reader to the relevant works by Fonseca \cite{Fons}, Fonseca and M\"uller \cite{FoMu}, De Lellis, Focardi and Spadaro in \cite{DeLellisFocardiSpadaro}.
\end{remark}
It is immediate to see that for $\psi\in C^0(\R^{(n-d)\times d},[0,\infty))$ the energy functional $F_\psi$ is elliptic on Lipschitz multivalued functions if and only if for every $Q\in \N$ the $Q$--integrand $\bar{\psi}$ is quasiconvex. Hence we deduce the following surprising result from Theorem \ref{thm:poly-multi}:

\begin{thm}\label{qcvspc}
 Let $\psi\in C^0(\R^{(n-d)\times d},[0,\infty))$. Then $\psi$ is polyconvex if and only if for every $Q\in \N$ the $Q$--integrand $\bar{\psi}$ is quasiconvex.
\end{thm}

By a result of Šverák \cite{Sverak} and another of Alibert and Dacorogna \cite{AlibertDacorogna}, there is a continuous $\psi\from \R^{2\times 2} \to [0,\infty)$ such that $F_\psi$ is elliptic for Lipschitz graphs but $\psi$ is not polyconvex. Indeed, the ellipticity of $F_\psi$ for Lipschitz graphs is equivalent to the classical notion of quasiconvexity for $\psi$, see \cite[Definition 4.2]{Mull}, which in turn is simply Definition \ref{qc} with $Q=1$. We recall that there were earlier examples of quasiconvex quadratic forms on $\R^{3\times 3}$ that are not polyconvex \cite{Ball, Serre, Terpstra}. However, every quasiconvex quadratic form on $\R^{2\times 2}$ is polyconvex \cite{Serre, Terpstra}, hence the need of the counterexamples in \cite{Sverak, AlibertDacorogna}.
From this discussion, we deduce that Theorem~\ref{thm:poly-multi} implies the following analogue of Theorem~\ref{thm:BurIvaNonPoly} for graphs.
\begin{thm}\label{thm:poly-multi-conseq}
  There is a continuous $\psi \from \R^{2\times 2} \to [0,\infty)$ such that $F_\psi$ is elliptic for Lipschitz graphs but not for Lipschitz multigraphs.
\end{thm}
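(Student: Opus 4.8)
\textbf{Proof proposal for Theorem~\ref{thm:poly-multi-conseq}.}

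The plan is to combine Theorem~\ref{thm:poly-multi} with Šverák's construction in the most direct way. First I recall Šverák's result \cite{Sverak}: there is a continuous (indeed smooth, strongly quasiconvex) function $\psi\from \R^{2\times 2}\to (0,\infty)$ which is quasiconvex but not polyconvex; the key point is that quasiconvexity is exactly the condition guaranteeing that $F_\psi$ is elliptic for Lipschitz \emph{single-valued} graphs, i.e., that affine maps minimize $\int_B \psi(Dg)$ among Lipschitz $g\from B\to \R^2$ with prescribed affine boundary data. (If one prefers to stay at the level of currents, one can note that a single-valued Lipschitz graph is the $Q=1$ case of a multigraph, so ``elliptic for Lipschitz graphs'' is literally a special case of the competitor class in Theorem~\ref{thm:poly-multi}; ellipticity for graphs is then precisely Šverák's quasiconvexity condition applied to the integrand $g\mapsto \psi(Dg)$ with $n-d = d = 2$.) I would spend a sentence or two making sure the normalization matches: we are in the regime $n=4$, $d=2$, $n-d=2$, so $\psi$ is a function on $\R^{2\times 2} = \R^{(n-d)\times d}$, exactly the domain in Definition~\ref{def:matrix-polycon} and Theorem~\ref{thm:poly-multi}.

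With such a $\psi$ fixed, the argument is immediate: since $\psi$ is quasiconvex, $F_\psi$ is elliptic for Lipschitz graphs; but $\psi$ is not polyconvex, so by Theorem~\ref{thm:poly-multi} (applied with this $n$, $d$), $F_\psi$ is \emph{not} elliptic on Lipschitz multivalued functions. In particular there exist $P_0\in\OGr(2,4)$, a unit $2$-cube $D\subset P_0$, and a competitor Lipschitz multigraph with the same boundary $\partial[D]$ and strictly smaller $F_\psi$-energy than $[D]$ itself; this is what it means for $F_\psi$ to fail ellipticity for Lipschitz multigraphs. That is exactly the asserted dichotomy.

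The only genuine obstacle — and it is minor — is bookkeeping about which direction of Theorem~\ref{thm:poly-multi} is being used and against what competitor class, plus confirming that Šverák's integrand, which is stated as a condition on $W^{1,\infty}$ maps $\R^2\to\R^2$, gives ellipticity in precisely the sense ``affine minimizes among compactly-supported perturbations,'' which is the sense of ellipticity for graphs meant here. I would therefore phrase the proof as: (i) quote Šverák to get $\psi$ quasiconvex, non-polyconvex; (ii) observe quasiconvexity $\iff$ ellipticity of $F_\psi$ for Lipschitz graphs (this is the classical Morrey characterization, since $F_\psi([\Lambda_g])=\int_B\psi(Dg)$ for single-valued $g$ by the area formula quoted before Theorem~\ref{thm:poly-multi}); (iii) apply the ``only if'' direction of Theorem~\ref{thm:poly-multi} contrapositively: not polyconvex $\Rightarrow$ not elliptic for multigraphs. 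Since everything quantitative is already packaged in the cited results, no computation is required beyond checking that $\psi$ indeed takes values in $(0,\infty)$ (which one can arrange by adding a large positive constant to Šverák's example, or by using the version of his construction that is already positive).
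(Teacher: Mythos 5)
Your proposal is correct and is essentially the paper's own argument: the paper derives Theorem~\ref{thm:poly-multi-conseq} immediately from Theorem~\ref{thm:poly-multi} by citing Šverák's example of a continuous, quasiconvex (hence elliptic for single-valued Lipschitz graphs) but non-polyconvex integrand on $\R^{2\times 2}$. Your extra bookkeeping about the Morrey quasiconvexity/graph-ellipticity equivalence and positivity of $\psi$ is consistent with what the paper implicitly assumes.
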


\begin{remark}
    Theorem \ref{qcvspc} and Theorem \ref{thm:poly-multi-conseq} leave some interesting open questions. For example, is there a $Q\in \N$ such that for every $\psi\in C^0(\R^{(n-d)\times d},[0,\infty))$ it holds that $\psi$ is polyconvex if and only if the $Q$--integrand $\bar{\psi}$ is quasiconvex? We believe that the answer to this question is negative. If $\psi$ is not polyconvex, then by Theorem~\ref{thm:construct}, there is a $Q$--valued graph with the same boundary as the unit cube and strictly smaller energy. If $\psi$ is close to a polyconvex function in a suitable norm, however, such a graph may have to satisfy very strict conditions, which may require $Q$ to be large. 
\end{remark}

\subsection{The atomic condition implies strict polyconvexity} The second main application of Theorem~\ref{thm:construct} is in the study of the atomic condition for anisotropic integrands, which was introduced in \cite[Definition 1.1]{DePDeRGhi} to prove the rectifiability of varifolds with bounded anisotropic first variation \cite[Theorem 1.2]{DePDeRGhi}. Recall that a varifold is a positive Radon measure on $\R^{n}\times \Gr(d,n)$.

Given $\Psi\in C^1(\Gr(d,n),(0,\infty))$, a varifold $V$, and a Borel measurable subset $U\subset \R^n$, it is natural to define the energy of $V$ on $U$ by 
\begin{equation}\label{envar}
F_\Psi(V,U):= \int_{U\times \Gr(d,n)} \Psi(T) d V(x,T),
\end{equation}
the energy of $V$ as $F(V)=F_\Psi(V) := F_\Psi(V,\R^n)$, and the anisotropic first variation with respect to $\Psi$ as 
\[
\delta_\Psi[V](g) := \left.\frac{d}{d\varepsilon}\right|_{\varepsilon = 0}F_\Psi((\id+\varepsilon g)_\sharp(V)), \quad \forall g \in C^1_c(\R^n,\R^n).
\]
Here, for a diffeomorphism $f\from \R^n \to \R^n$, we denote with $f_\sharp(V)$ the image of $V$ under $f$, i.e., the varifold whose action on every $\varphi\in C^0_c(\R^n\times \Gr(d,n))$ is: 
$$\int_{\R^n\times \Gr(d,n)}\varphi(x,T)d(f_\sharp(V))(x,T)=\int_{\R^n\times \Gr(d,n)}\varphi(f(x),d_xf(T))Jf(x,T) d V(x,T),$$
where $Jf(x,T)$ is the $d$--Jacobian determinant of the differential $d_xf$ restricted to $T$, see \cite[Chapter 8]{SIM}. Note that the image of a varifold \(V\) is {\em not} the same as the push-forward of the  Radon measure \(V\) through a map $\psi$ defined on $\R^n\times \Gr(d,n)$ (we use \(\psi_* V\) to denote the push-forward).

\begin{defn}[{\cite[Definition 4.8]{DeRosaKolasinski}}]
    \label{def:BC}
    We say that an integrand $\Psi\in C^1(\Gr(d,n),(0,\infty))$ satisfies (BC) if for any~$T \in \Gr(d,n)$, if $\mu\neq 0$ is a positive Radon
    measure over $\Gr(d,n)$ such that
    $$\delta_{\Psi} \big[(\mathcal H^d \rstr T) \otimes \mu\big] = 0,$$
    then $\supp \mu = \{T\}$.
\end{defn}
This condition was introduced in \cite{DeRosaKolasinski}, where it was proved to be equivalent to the atomic condition \cite[Lemma 7.2]{DeRosaKolasinski}. One application of the atomic condition is given by Theorem 1.2 of \cite{DePDeRGhi}, which states that if $\Psi$ satisfies the atomic condition and $V$ is a $d$--dimensional varifold such that $\delta_\Psi[V]$ is a Radon measure, then the varifold 
$$V_* :=V\rstr \{x\in \R^n  : \theta_{d*}(x,V)>0\}\times \Gr(d,n)$$
obtained by restricting $V$ to points of positive lower $d$--density is $d$--rectifiable. Conversely, this theorem does not hold when $\Psi$ does not satisfy the atomic condition. By Proposition 3.1 of \cite{DPHRR}, the same rectifiability result holds also restricting $V$ to points of positive upper $d$--density.

Our results help to characterize the atomic condition in general codimension.
In codimension one, a $C^1$ integrand $\Psi: \Gr(n-1,n)\to (0,\infty)$ satisfies the atomic condition if and only if the function $G: v \in S^{n-1} \mapsto \Psi(v^\perp)$ is strictly convex as a function on the unit sphere $S^{n-1}$ \cite[Theorem 1.3]{DePDeRGhi}.
That is, for any positive Radon measure $\mu$ on $S^{n-1}$, if we let $v=\int w \ud \mu(w)$, then
$$|v| G\left(\frac{v}{|v|}\right) \le \int_{S^{n-1}} G(w) \ud \mu(w),$$
with equality only when $\mu$ is a point measure.

In higher codimensions, we can use Theorem~\ref{thm:construct} to show that strict polyconvexity is necessary for the atomic condition to hold. 
\begin{thm}\label{thm:main-atomic2}
    If $\Psi\in C^1(\Gr(d,n),(0,\infty))$ satisfies the atomic condition, then the even extension $\tilde \Psi$ of $\Psi$ to $\OGr(d,n)$ is strictly polyconvex.
\end{thm}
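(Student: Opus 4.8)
The plan is to prove directly that the atomic condition implies the \emph{strict Jensen property} --- $\int\tilde\Psi\,\ud\mu\ge\tilde\Psi(P_0)$ whenever $\int\omega_P\,\ud\mu(P)=\omega_{P_0}$, with equality only for $\mu=\delta_{P_0}$ --- and then to upgrade this to the existence of a strictly convex $1$--homogeneous extension. The bridge is Theorem~\ref{thm:construct}: an admissible $\mu$ is realized as a weak limit of weighted Gaussian images of polyhedral chains $\Sigma_i$ whose varifolds converge to the diffuse varifold $(\cH^d\rstr D)\otimes p_\sharp\mu$, which is precisely the object governed by the atomic condition.

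Concretely: fix $P_0\in\OGr(d,n)$ and a unit cube $D$ in $P_0$, write $F=F_{\tilde\Psi}$, and note $\tilde\Psi$ is continuous with $\tilde\Psi\ge c_0>0$. By Theorem~\ref{thm:inf-formula} applied to $\tilde\Psi$, $\FE_F(\partial[D];\R)=\inf\{\int\tilde\Psi\,\ud\mu:\int\omega_P\,\ud\mu(P)=\omega_{P_0}\}$, and this infimum is attained by some $\mu^*$ (weak-$*$ compactness, using the mass bound $\mu(\OGr)\le m/c_0$ with $m:=\FE_F(\partial[D];\R)$). Applying the second item of Theorem~\ref{thm:construct} to $\mu^*$ gives polyhedral chains $\Sigma_i$ with $\partial\Sigma_i=\partial[D]$, $\supp\Sigma_i\to D$, $\cV(\Sigma_i)\to V^*:=(\cH^d\rstr D)\otimes p_\sharp\mu^*$, and $F(\Sigma_i)=\int\tilde\Psi\,\ud\gamma_{\Sigma_i}\to\int\tilde\Psi\,\ud\mu^*=m$; moreover $F_\Psi(V^*)=\int\tilde\Psi\,\ud\mu^*=m$ since $\cH^d(D)=1$ and $\tilde\Psi=\Psi\circ p$. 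Next I would show $\delta_\Psi[V^*]=0$. Since $\delta_\Psi[V^*]$ is a translation--invariant, constant--coefficient first--order functional concentrated on $D$, if $\delta_\Psi[V^*]\ne0$ one can choose $g\in C^1_c(\R^n,\R^n)$ vanishing near $\partial D$ (which is $\cH^d$--null) with $\delta_\Psi[V^*](g)<0$. Because $g\equiv0$ near $\supp\partial[D]=\partial D$ and $\supp\Sigma_i\to D$, the diffeomorphism $\id+\varepsilon g$ fixes $\partial[D]$, so $(\id+\varepsilon g)_\sharp\Sigma_i\in\bR^d(\R^n)$ has boundary $\partial[D]$. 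Using $F\big((\id+\varepsilon g)_\sharp\Sigma_i\big)=F_\Psi\big((\id+\varepsilon g)_\sharp\cV(\Sigma_i)\big)$ (evenness of $\tilde\Psi$, and that $\cV$ commutes with pushforward), the weak continuity of pushforward of varifolds, and the continuity of $F_\Psi$,
$$F\big((\id+\varepsilon g)_\sharp\Sigma_i\big)\ \xrightarrow[\ i\to\infty\ ]{}\ F_\Psi\big((\id+\varepsilon g)_\sharp V^*\big)=m+\varepsilon\,\delta_\Psi[V^*](g)+o(\varepsilon)<m$$
for $\varepsilon$ small; hence for $\varepsilon$ small and $i$ large $F\big((\id+\varepsilon g)_\sharp\Sigma_i\big)<m=\FE_F(\partial[D];\R)$, a contradiction. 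So $\delta_\Psi[V^*]=0$, i.e.\ $\delta_\Psi\big[(\cH^d\rstr p(P_0))\otimes p_\sharp\mu^*\big]=0$.

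Since $p_\sharp\mu^*\ne0$, the atomic condition then forces $\supp p_\sharp\mu^*=\{p(P_0)\}$, so $\mu^*=a\,\delta_{P_0}+b\,\delta_{-P_0}$ with $a,b\ge0$; the barycenter condition gives $a-b=1$, and $m=(a+b)\tilde\Psi(P_0)\le F([D])=\tilde\Psi(P_0)$ forces $a+b\le1$, hence $b=0$, $\mu^*=\delta_{P_0}$, and $\FE_F(\partial[D];\R)=\tilde\Psi(P_0)$. As $P_0$ was arbitrary, $F$ is elliptic for $\bR^d(\R^n)$, so $\tilde\Psi$ is polyconvex by Theorem~\ref{thm:inf-formula}; consequently every $\mu$ with $\int\omega_P\,\ud\mu=\omega_{P_0}$ and $\int\tilde\Psi\,\ud\mu=\tilde\Psi(P_0)$ is itself a minimizer, and the argument above applied to it gives $\mu=\delta_{P_0}$. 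This is the strict Jensen property.

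It remains to upgrade strict Jensen to strict polyconvexity. Let $g$ be the $1$--homogeneous convex envelope of $\tilde\Psi$ on the simple unit $d$--vectors (an extension of $\tilde\Psi$, by polyconvexity) and let $K_0=\{\xi\in\bigwedge^d(\R^n)^*:\langle\xi,v\rangle\le g(v)\ \forall v\}$, so $h_{K_0}=g$ and the exposed face of $K_0$ in each simple direction $\omega_P$ has height $\tilde\Psi(P)$. Unwinding the equality case of Jensen's inequality, strict Jensen is exactly the statement that for every $P_0$ and every $\xi$ in the exposed face of $K_0$ in direction $\omega_{P_0}$, the vector $\omega_{P_0}$ is an extreme point of $\clos\conv\{\omega_S:S\in\OGr(d,n),\ \langle\xi,\omega_S\rangle=\tilde\Psi(S)\}$. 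From this one constructs a \emph{smooth} convex body $K\subseteq K_0$ still meeting the exposed face of $K_0$ in every simple direction, and then $h_K$ is a strictly convex $1$--homogeneous extension of $\tilde\Psi$, i.e.\ $\tilde\Psi$ is strictly polyconvex; in codimension one every $(n-1)$--vector is already simple, so $g$ itself works and this step is immediate. The two genuinely substantive points are (i) the verification that $V^*$ is stationary, where the difficulty is to localize the competitor variation away from $\partial D$ (so that no boundary correction is needed) while still being free to make $\delta_\Psi[V^*]$ negative --- this is what lets the minimizing sequence $\Sigma_i$ detect non--stationarity of its limiting diffuse varifold --- and (ii) the convex--geometric construction in the last step, which is exactly where the gap between simple and general $d$--vectors in higher codimension has to be bridged.
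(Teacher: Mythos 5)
Your main engine is the same as the paper's: take an optimal measure $\mu^*$ for the right-hand side of Theorem~\ref{thm:inf-formula}, realize it by Theorem~\ref{thm:construct} as the limit of Gaussian images of a minimizing sequence whose varifolds converge to $(\cH^d\rstr D)\otimes p_\sharp\mu^*$, deduce stationarity of this limit by pushing the minimizing sequence forward along a flow, and then invoke (BC)/the atomic condition. The paper packages the stationarity step as Proposition~\ref{stationarylemma} and runs the whole argument in the contrapositive (a measure violating strict Jensen produces a stationary varifold with non-atomic fiber measure); your ``forward'' formulation is only cosmetically different.

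Two caveats. First, your localization claim is false as stated: it is not true that $\delta_\Psi[V^*]\neq 0$ forces the existence of $g$ supported away from $\partial D$ with $\delta_\Psi[V^*](g)<0$; already for the area integrand and $\mu^*=\delta_{P_0}$, the first variation of $(\cH^d\rstr D)\otimes\delta_{p(P_0)}$ is a nonzero functional concentrated on $\partial D$. What your variational argument actually yields is stationarity in $\R^n\setminus\supp \partial[D]$ (exactly the conclusion of Proposition~\ref{stationarylemma}), and you still need the separate (short, but not vacuous) step of passing from the cube-supported varifold $(\cH^d\rstr D)\otimes p_\sharp\mu^*$ to the plane-supported one $(\cH^d\rstr p(P_0))\otimes p_\sharp\mu^*$ before (BC) applies --- by blow-up at an interior point of $D$, or equivalently by the constant-coefficient structure of the interior first variation; as written you conflate the two varifolds. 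Second, your final ``upgrade'' from the strict Jensen property to the existence of a strictly convex $1$--homogeneous extension is only asserted (the smooth convex body $K\subseteq K_0$ is not constructed). To be fair, the paper's proof opens by invoking precisely the contrapositive of that implication --- failure of strict polyconvexity is taken to produce a measure $\mu\neq\delta_{P_0}$ with $\int\Psi\,\ud\mu\leq\Psi(P_0)$ --- without further argument, so on this point you are no less complete than the paper, merely more explicit that a convex-analytic bridge is being used; but in your write-up it remains a sketch, not a proof.
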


We prove Theorem~\ref{thm:main-atomic2} by showing that a varifold constructed as the limit of a minimizing sequence of rectifiable currents is stationary (Proposition~\ref{stationarylemma}), then constructing rectifiable currents that converge to varifolds of the form $(\cH^d\rstr T)\otimes \mu$. We give two such constructions, one based on ideas from \cite[Theorem 8.8]{DeRosaKolasinski} and one based on Theorem~\ref{thm:construct}.

\subsection*{Outline of paper}
In Section \ref{sec1} we will provide the preliminaries and the basic terminology that we will use throughout the paper. Section \ref{sec2} is the core of the paper, where we prove Theorem \ref{thm:construct} and use it to prove Theorem~\ref{thm:poly-multi} and Theorem~\ref{thm:poly-multi-conseq}. Finally, in Section \ref{sec:atomic}, we prove Theorem \ref{thm:main-atomic2}.

\section{Preliminaries}\label{sec1}
We first recall some basic notation from measure theory. We define the total variation of a signed measure $\mu$ over $\OGr(d,n)$ as 
$$\|\mu \|_{\TV}:=|\mu(\OGr(d,n))|=\sup \sum_i |\mu(E_i)|,$$ where the supremum is taken over all (countable) Borel partitions $\{E_i\}$ of $\OGr(d,n)$. 
We denote with $d_{\mathsf{W}}(\mu,\nu)$ the Wasserstein distance between two probability measures $\mu,\nu \in \mathcal{M}(\OGr(d,n))$. The Wasserstein distance $d_{\mathsf{W}}$ metrizes the weak-$*$ convergence of probability measures on compact spaces.

Let $\supp \mu$ be the support of a measure $\mu$, and for a Borel set \(E\),  let \(\mu\rstr E\) be the restriction of \(\mu\) to \(E\), so that \([\mu\rstr E](A)=\mu(E\cap A)\) for every Borel set $A$. 
For a current $T$ and a Borel set $E$, we denote with $\mass T$ the total mass of $T$ and with $\mass_T(E):=\mass (T\rstr E)$.

In order to construct the currents in Theorem~\ref{thm:construct}, we will introduce some notation for working with polyhedral chains. 

For a coefficient group $G$, let $\Cpoly_d(\R^n;G)\subset \cF_d(\R^n;G)$ be the space of polyhedral $d$--chains in $\R^n$ with coefficients in $G$, i.e., linear combinations of the form
\begin{equation} \label{eq:T-sum}
  T=\sum_{i=1}^k a_i [\Delta_i],
\end{equation}
where the $\Delta_i$'s are oriented $d$--dimensional convex polyhedra and $a_i\in G$.

A (polyhedral) triangulation $\tau$ of \(\R^n\) is a subdivision of \(\R^n\) into $n$--dimensional simplices such that each simplex is the convex hull of $n+1$ points in general position and any two simplices are disjoint or share a common face of dimension less than $n$. Furthermore, $\tau$ is a locally finite, i.e., any bounded set in \(\R^n\) intersects only finitely many simplices in $\tau$. All of our triangulations of $\R^n$ will be polyhedral. The following well-known lemma will be convenient.
\begin{lemma}\label{lem:polyhedral-to-simplicial}
  Let $A_1,\dots, A_k\in \Cpoly_d(\R^n;G)$. Then there is a triangulation $\tau$ of \(\R^n\) such that the $A_i$'s are all simplicial chains in $\tau$. That is, if the $d$--simplices of $\tau$ are $\sigma_1,\sigma_2,\dots$, then there are coefficients $a_{i,j}\in G$ such that only finitely many of the $a_{i,j}$'s are nonzero and 
  $$A_i = \sum_j a_{i,j} [\sigma_j], \qquad \mbox{for all $i$.}$$
\end{lemma}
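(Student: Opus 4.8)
The plan is to prove Lemma~\ref{lem:polyhedral-to-simplicial} by a standard finite simultaneous-triangulation argument, extended to cover all of $\R^n$. First I would observe that the coefficient group $G$ plays no role in the geometry: the statement is really about finitely many oriented convex polyhedra, since each $A_i$ is a finite sum $\sum_j a_{i,j}[\Delta_{i,j}]$, so the whole list involves only a finite collection $\mathcal{P}=\{\Delta_{i,j}\}$ of convex polyhedra in $\R^n$. Each such $\Delta$ is the intersection of finitely many closed half-spaces $\{\ell \le c\}$; let $\mathcal{H}$ be the finite set of all the affine hyperplanes $\{\ell = c\}$ arising as the boundaries of these half-spaces over all polyhedra in $\mathcal{P}$.

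Next I would build a polyhedral complex from the hyperplane arrangement $\mathcal{H}$. The hyperplanes in $\mathcal{H}$ partition $\R^n$ into finitely many (relatively open) cells; the closures of these cells, together with all their faces, form a locally finite polyhedral complex $\mathcal{C}$ (here the cells are closed convex polyhedra, possibly unbounded). By construction each $\Delta\in\mathcal{P}$ is a union of closed cells of $\mathcal{C}$: indeed, since $\partial\Delta$ is contained in $\bigcup\mathcal{H}$, no cell of $\mathcal{C}$ can straddle $\partial\Delta$, so every cell is either contained in $\Delta$ or disjoint from its interior. Hence each $[\Delta]$ is, as a $d$--chain, the sum of the fundamental classes of the $d$--cells of $\mathcal{C}$ contained in $\Delta$, with orientations inherited from $\Delta$. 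This already writes every $A_i$ as a cellular chain on the \emph{polyhedral} complex $\mathcal{C}$.

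Then I would pass from the polyhedral complex $\mathcal{C}$ to a genuine simplicial triangulation $\tau$. This is the classical fact that any (locally finite) polyhedral complex admits a simplicial subdivision without introducing new vertices — for instance via the "pulling" (or iterated stellar) subdivision: order the vertices of $\mathcal{C}$ and successively cone each cell over its first vertex, or simply barycentrically subdivide, which in the unbounded case is carried out cell by cell and is compatible along shared faces. The resulting $\tau$ is a triangulation of $\R^n$ refining $\mathcal{C}$, and each $d$--cell $C$ of $\mathcal{C}$ is a union of $d$--simplices of $\tau$ with compatible orientations, so $[C]=\sum_{\sigma\subset C}[\sigma]$. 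Substituting back, each $A_i=\sum_j a_{i,j}[\Delta_{i,j}]$ becomes a finite simplicial chain $\sum_j a_{i,j}\sum_{\sigma\subset\Delta_{i,j}}[\sigma]$, with only finitely many simplices appearing (those contained in the bounded region $\bigcup\mathcal{P}$), which is exactly the claim.

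The only mildly delicate point — the "main obstacle", such as it is — is bookkeeping about unbounded cells and local finiteness: the arrangement $\mathcal{C}$ has infinitely many (unbounded) cells, so one must be slightly careful that the subdivision step can be performed consistently on the whole arrangement and that it restricts to a finite triangulation on the relevant compact part. This is entirely routine, so I would simply cite a standard reference for simplicial subdivision of polyhedral complexes (e.g.\ Rourke--Sanderson, \emph{Introduction to Piecewise-Linear Topology}) rather than reproving it, and keep the argument above at the level of a sketch.
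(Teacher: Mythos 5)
The paper does not actually prove Lemma~\ref{lem:polyhedral-to-simplicial}; it is stated as a well-known fact and used as a black box, so there is nothing to compare line by line. Your argument is the standard proof and is essentially correct: reduce to the finite family of polyhedra $\Delta_{i,j}$, take the arrangement of all their bounding hyperplanes (including, for the $d<n$ cells, the hyperplanes cutting out their affine hulls, so that each $\Delta_{i,j}$ is a union of closed $d$--cells of the arrangement with consistent orientations), and then subdivide the resulting polyhedral complex simplicially; coefficients in $G$ ride along by linearity. The one point where your sketch is not literally right is the claim that the subdivision can be performed ``without introducing new vertices'' via pulling or barycentric subdivision: the arrangement of finitely many hyperplanes has unbounded cells, which may have no vertices at all and have no barycenter, so neither device applies to them as stated. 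The routine repair is to make every cell bounded before subdividing --- e.g.\ adjoin to the arrangement the hyperplanes of a locally finite cubical grid (or first a single large cube containing all the $\Delta_{i,j}$, then a grid outside it); the refined arrangement is locally finite with all cells compact polytopes, each $\Delta_{i,j}$ is still a union of closed $d$--cells, and a pulling subdivision with respect to a single global ordering of the vertices yields a locally finite triangulation $\tau$ of $\R^n$ in which every $d$--cell, hence every $A_i$, is a finite simplicial chain. With that adjustment (or an appeal to the standard triangulation results in Rourke--Sanderson, as you suggest), your proof is complete and is the argument the authors implicitly have in mind.
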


One of our main tools is intersecting two transverse polyhedral chains to get a third. This construction is based on the techniques for slicing flat chains by affine planes in \cite{WhiteRectifiabilityOfFlat}.
We say that two convex polyhedra $\Delta$ and $\Lambda$ are transverse if for every face $\delta\subset \Delta$ and every face $\lambda\subset \Lambda$, 
$$\dim(\delta\cap \lambda) \le \dim(\delta)+\dim(\lambda)-n.$$
If $\Delta$ and $\Lambda$ are transverse and if $\delta\subset \Delta$ and $\lambda\subset \Lambda$, then either $\delta\cap \lambda=\emptyset$ or $\delta\cap \lambda$ is a convex polyhedron with dimension $\dim(\delta)+\dim(\lambda)-n$.

We say that two polyhedral chains $T$ and $T'$ are transverse if they can be written as sums $T=\sum_{i=1}^k a_i [\Delta_i]$ and $T'=\sum_{i=1}^{k'} a'_i [\Delta'_i]$ such that $\Delta_i$ and $\Delta_j'$ are transverse for all $i$ and $j$. 
Note that for any $T$ and $T'$, we can make $T$ and $T'$ transverse by translating either one by a small vector.
If $T$ and $T'$ are transverse, we define
$$T\cap T' := \sum_{i,j} a_i a'_j [\Delta_i\cap \Delta'_j].$$
In order for $[\Delta_i\cap \Delta'_j]$ to be well-defined, we must use the orientations on $\Delta_i$ and $\Delta'_j$ to orient $\Delta_i\cap \Delta'_j$. As in \cite[Section 3]{WhiteRectifiabilityOfFlat}, we orient $\Delta_i\cap \Delta'_j$ with a basis $w_1,\dots, w_c$ such that, if $u_1,\dots, u_a,w_1,\dots, w_c$ is a basis for $\Delta_i$ that agrees with the orientation of $\Delta_i$, and $v_1,\dots, v_b,w_1,\dots, w_c$ is a basis for $\Delta_j'$ that agrees with the orientation of $\Delta_j'$, then $u_1,\dots, u_a,$ $v_1,\dots, v_b,w_1,\dots, w_c$ is a positively oriented basis for $\R^n$. This choice of orientation guarantees that if $U\subset \R^n$ is an $n$--dimensional polyhedron transverse to $T$, then $T\cap [U]$ is equal to the restriction $T\rstr U$. 

With this choice of orientation, $\cap$ is bilinear, $A\cap B =(-1)^{\dim(A)\dim(B)} B\cap A$ and 
\begin{equation}\label{eq:leibniz-poly}
  \partial (T\cap T') = (-1)^{n-\dim T'}\partial T \cap T' + T\cap \partial T'.
\end{equation}
In this paper, we are primarily concerned with the case that $U\subset \R^n$ is an $n$--dimensional polyhedron and $T'=[U]$, in which case 
\begin{equation}\label{eq:top-dim-leibniz}
  \partial (T\cap T') = \partial T \cap [U] + T\cap \partial [U] = \partial T \rstr U + T\cap \partial [U].
\end{equation}

We now focus our attention on some properties of the weighted Gaussian image $\gamma_T$ of $T$. Recall that $\gamma_T\in \cM(\OGr(d,n))$ is the pushforward of the mass of $T$ under the Gauss map. We first observe the following subadditivity property.
\begin{lemma}\label{subadditivity}
  For any $A_1,A_2\in \bR^d(\R^n)$, 
  $$\gamma_{A_1+A_2}\leq \gamma_{A_1}+\gamma_{A_2}.$$
\end{lemma}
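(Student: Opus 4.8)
The plan is to reduce to the case where the varifolds associated to $A_1$ and $A_2$ are carried by disjoint rectifiable sets, together with a common overlap piece where the two currents are supported on the same tangent plane, and then check the inequality measure by measure. Write $A_j = (E_j, \theta_j\,\omega_{P_j})$ for $j=1,2$, so that $\gamma_{A_j} = (P_j)_\sharp(\theta_j\,\cH^d\rstr E_j)$. The rectifiable current $A_1+A_2$ is again rectifiable, carried by $E := E_1\cup E_2$; on $E_1\setminus E_2$ it coincides with $A_1$, on $E_2\setminus E_1$ with $A_2$, and on the overlap $E_1\cap E_2$ the approximate tangent planes must agree $\cH^d$--a.e. as \emph{unoriented} planes (two rectifiable sets meeting in positive measure share approximate tangent planes a.e.), so after relabelling orientations we may assume $P_1 = P_2 =: P$ on $E_1\cap E_2$, and there the density of $A_1+A_2$ is either $|\theta_1 \pm \theta_2|$ depending on whether the orientations match, which in all cases is $\le \theta_1+\theta_2$.

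Concretely, I would first split $\gamma_{A_1+A_2}$ according to the partition $E = (E_1\setminus E_2)\sqcup(E_2\setminus E_1)\sqcup(E_1\cap E_2)$. The contribution from $E_1\setminus E_2$ is exactly $(P_1)_\sharp(\theta_1\,\cH^d\rstr(E_1\setminus E_2)) \le \gamma_{A_1}$, and symmetrically for $E_2\setminus E_1$. For the overlap, the Gauss map of $A_1+A_2$ agrees a.e.\ with the common value $P$ (or possibly only with $p(P)$, but since we are pushing forward to $\OGr(d,n)$ and the orientation of $A_1+A_2$ on the overlap is $\sign(\theta_1\pm\theta_2)\,\omega_P$ when the two pieces are, say, co-oriented, one must track this carefully — see below), and the multiplicity of $A_1+A_2$ on $E_1\cap E_2$ is pointwise bounded by $\theta_1+\theta_2$. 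Hence the overlap contribution to $\gamma_{A_1+A_2}$ is $\le (P_1)_\sharp(\theta_1\,\cH^d\rstr(E_1\cap E_2)) + (P_2)_\sharp(\theta_2\,\cH^d\rstr(E_1\cap E_2))$. Summing the three pieces gives $\gamma_{A_1+A_2}\le \gamma_{A_1}+\gamma_{A_2}$ as measures on $\OGr(d,n)$.

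The main obstacle is the bookkeeping of orientations on the overlap set $E_1\cap E_2$: there the tangent planes of $A_1$ and $A_2$ agree only as unoriented planes, so $A_1$ may carry $+\omega_P$ with multiplicity $\theta_1$ while $A_2$ carries $-\omega_P$ with multiplicity $\theta_2$. In that case the Gauss image of $A_1+A_2$ restricted to the overlap is supported on \emph{either} $\{+P\}$ or $\{-P\}$ (wherever $\theta_1 > \theta_2$ or $\theta_1 < \theta_2$ respectively) with total mass $|\theta_1-\theta_2|\,\cH^d(E_1\cap E_2) \le \theta_1\,\cH^d + \theta_2\,\cH^d$, and the right-hand side $\gamma_{A_1}+\gamma_{A_2}$ places mass $\theta_1\,\cH^d$ at $+P$ and $\theta_2\,\cH^d$ at $-P$; since the left side puts all its mass on only one of these two atoms, the pointwise inequality of measures is immediate. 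The co-oriented case is the same with $|\theta_1+\theta_2|=\theta_1+\theta_2$. Once this case distinction is handled pointwise (i.e.\ for $\cH^d$--a.e.\ $x\in E_1\cap E_2$), the inequality follows by integrating over the three pieces of the partition; no further analytic input is needed beyond the standard fact that the sum of rectifiable currents is rectifiable with the multiplicity/orientation behavior described above.
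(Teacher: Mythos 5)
Your proof is correct and follows essentially the same route as the paper: both arguments rest on the fact that, $\cH^d$--a.e.\ where the two currents overlap, their orienting $d$--vectors are collinear (the unoriented tangent planes coincide), and then verify the measure inequality pointwise before integrating. The paper merely packages your three-piece partition and co-/anti-orientation case analysis into a single pointwise inequality $|V_1(x)+V_2(x)|\le \one_C(V_1(x))|V_1(x)|+\one_C(V_2(x))|V_2(x)|$ tested against the cone $C$ of an arbitrary Borel set $U\subset\OGr(d,n)$.
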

\begin{proof}
    By the definition of rectifiable currents, for $i=1,2$, the action of $A_i$ on any differential $d$--form $\tau$ can be expressed by
\begin{equation} \label{rect} 
\langle A_i, \tau \rangle = \int_{E_i} \langle V_i(x), \tau(x) \rangle \, \ud\mathcal H^{d}(x),
\end{equation}
where $E_i \subset \R^{n}$ is countably $d$-rectifiable and $V_i(x)$ is a $\mathcal H^{d}$--measurable, simple $d$--vector field which is parallel to the approximate tangent space of $E_i$ at $\mathcal H^{d}$--a.e. $x \in E_i$. In short, we can represent $A_i$ as $(E_i,V_i)$. 

Let $E=E_1\cup E_2$. If we extend $V_i$ by $0$ wherever it is not defined, then we can write $A_i = (E,V_i)$ and $A_1+A_2=(E,V_1+V_2)$. Let $U\subset \OGr(d,n)$ and let $C\subset \bigwedge^d\R^n$ be the cone consisting of nonzero simple $d$--vectors which are parallel to an oriented $d$-plane in $U$. Then 
\begin{equation}\label{eq:cone-gaussian}
  \gamma_{A_i}(U) = \int_E \one_C(V_i(x))|V_i(x)|\ud \cH^d(x).
\end{equation}

Let $V=V_1+V_2$. For $\mathcal H^{d}$--a.e. $x \in E$, if $V(x)\ne 0$, then $V_1(x)$ and $V_2(x)$ are real multiples of $V(x)$. In particular, if $V(x)\in C$, then $V_1(x)\in C$ or $V_2(x)\in C$ or both. In all three cases,
$$|V(x)| \le \one_C(V_1(x)) |V_1(x)| + \one_C(V_2(x))  |V_2(x)|.$$
Therefore,
\begin{multline*}
  \gamma_{A_1+A_2}(U) = \int_E \one_C(V(x)) |V(x)| \ud \cH^d(x) \\
  \le \int_E \one_C(V_1(x)) |V_1(x)| + \one_C(V_2(x)) |V_2(x)|\ud \cH^d(x) = \gamma_{A_1}(U) + \gamma_{A_2}(U),
\end{multline*}
as desired.
\end{proof}

Similarly, the map $A\mapsto \gamma_A$ is distance-decreasing. 
\begin{lemma}\label{lem:dist-decrease}
  For any $A_1,A_2\in \mathbf{R}^d(\R^n)$, 
  $$\|\gamma_{A_1} - \gamma_{A_2}\|_{\TV} \le \mass(A_1-A_2).$$
  In particular, taking $A_2=0$, this implies $\|\gamma_{A}\|_{\TV} \le \mass(A)$.
\end{lemma}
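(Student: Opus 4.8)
The plan is to reduce the total-variation estimate for $\gamma_{A_1}-\gamma_{A_2}$ to the mass comparison by using the same representation of rectifiable currents on a common rectifiable set that was set up in the proof of Lemma~\ref{subadditivity}. First I would write $A_i=(E,V_i)$ for $i=1,2$, where $E=E_1\cup E_2$ is countably $d$--rectifiable and $V_i$ is extended by $0$ off $E_i$; then $A_1-A_2=(E,V_1-V_2)$, and for $\mathcal H^d$--a.e.\ $x\in E$ the vectors $V_1(x)$ and $V_2(x)$ are both parallel to the approximate tangent $d$--plane of $E$ at $x$ (using that both $V_i(x)$ are, and that the approximate tangent plane is well-defined a.e.\ on a rectifiable set), so $V_1(x)-V_2(x)$ is as well.

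Next I would fix an arbitrary Borel partition $\{U_j\}$ of $\OGr(d,n)$ and, for each $j$, let $C_j\subset\bigwedge^d\R^n$ be the cone of nonzero simple $d$--vectors parallel to an oriented $d$--plane in $U_j$, exactly as in \eqref{eq:cone-gaussian}. By that formula, $\gamma_{A_i}(U_j)=\int_E \one_{C_j}(V_i(x))\,|V_i(x)|\ud\cH^d(x)$, so
\begin{equation*}
  |\gamma_{A_1}(U_j)-\gamma_{A_2}(U_j)| \le \int_E \bigl|\one_{C_j}(V_1(x))|V_1(x)| - \one_{C_j}(V_2(x))|V_2(x)|\bigr|\ud\cH^d(x).
\end{equation*}
The key pointwise claim is that for a.e.\ $x$,
\begin{equation*}
  \bigl|\one_{C_j}(V_1(x))|V_1(x)| - \one_{C_j}(V_2(x))|V_2(x)|\bigr| \le \one_{C_j}(V_1(x)-V_2(x))\,|V_1(x)-V_2(x)|
\end{equation*}
whenever $V_1(x)\ne V_2(x)$ and at least one of $V_1(x),V_2(x)$ lies in $C_j$ — and both sides vanish otherwise. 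This is where I expect the only real work: one distinguishes the cases according to whether $V_1(x)$ and $V_2(x)$ lie on the same ray or opposite rays through the origin in the line spanned by the tangent $d$--vector. If they are on the same ray (or one is zero), then $V_1(x)-V_2(x)$ is on that same line, $\one_{C_j}(V_1(x))=\one_{C_j}(V_2(x))=\one_{C_j}(V_1(x)-V_2(x))$ once $V_1\ne V_2$ and one of them is nonzero and in $C_j$, and the left side is $\bigl||V_1(x)|-|V_2(x)|\bigr|=|V_1(x)-V_2(x)|$. If they are on opposite rays, then $U_j$ cannot contain both orientations unless... — here one uses that $C_j$ consists of $d$--vectors parallel to \emph{oriented} $d$--planes, so $C_j$ meets each punctured line in at most one of its two rays; hence at most one of $\one_{C_j}(V_1(x)),\one_{C_j}(V_2(x))$ is $1$, the left side is $|V_1(x)|$ or $|V_2(x)|$, and this is $\le|V_1(x)|+|V_2(x)|=|V_1(x)-V_2(x)|$, with $\one_{C_j}(V_1(x)-V_2(x))=1$ because $V_1(x)-V_2(x)$ is a positive multiple of whichever of $V_1(x),-V_2(x)$ points into $C_j$.

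Finally I would sum over $j$ and swap sum and integral (Tonelli, everything nonnegative): since the $C_j$ are disjoint, for each fixed $x$ the vector $V_1(x)-V_2(x)$ lies in at most one $C_j$, so $\sum_j \one_{C_j}(V_1(x)-V_2(x))\,|V_1(x)-V_2(x)| \le |V_1(x)-V_2(x)|$ pointwise. Therefore
\begin{equation*}
  \sum_j |\gamma_{A_1}(U_j)-\gamma_{A_2}(U_j)| \le \int_E |V_1(x)-V_2(x)|\ud\cH^d(x) = \mass(A_1-A_2),
\end{equation*}
the last equality because $(E,V_1-V_2)$ is a representation of $A_1-A_2$. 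Taking the supremum over Borel partitions $\{U_j\}$ gives $\|\gamma_{A_1}-\gamma_{A_2}\|_{\TV}\le\mass(A_1-A_2)$, and the case $A_2=0$ is the stated corollary. The main obstacle, as noted, is the pointwise inequality, and specifically making precise that the cone over an open (or Borel) subset of the \emph{oriented} Grassmannian cannot contain both rays of a line through the origin; with that in hand the rest is bookkeeping.
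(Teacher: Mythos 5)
Your overall strategy (common representation $(E,V_1,V_2)$, a pointwise bound, Tonelli, then the supremum over partitions) matches the paper's, but the pointwise inequality you hinge it on is false, and precisely at the spot you flagged as the main obstacle. First, the assertion that the cone over a Borel subset of the \emph{oriented} Grassmannian ``meets each punctured line in at most one of its two rays'' is simply not true: nothing prevents $U_j$ from containing both $P$ and $-P$, in which case $C_j\cap L=L\setminus\{0\}$ — the paper explicitly lists this as one of the three possibilities for $L\cap C$. Second, and more damaging, your claimed inequality $\bigl|\one_{C_j}(V_1)|V_1|-\one_{C_j}(V_2)|V_2|\bigr|\le \one_{C_j}(V_1-V_2)\,|V_1-V_2|$ fails already in the same-ray case: take $V_1=\omega_P$, $V_2=2\,\omega_P$ with $P\in U_j$ and $-P\notin U_j$; then the left side is $1$ while $V_1-V_2=-\omega_P\notin C_j$, so the right side is $0$. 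So the middle step of your chain (bounding each integrand by $\one_{C_j}(V_1-V_2)|V_1-V_2|$ before summing over $j$) does not hold, and the proof as written does not go through.

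The fix is to drop the indicator on the right-hand side. The paper's route: for a.e.\ $x$ the vectors $V_1(x),V_2(x)$ lie on a line $L\subset\bigwedge^d\R^n$, and whether $L\cap C_j$ is empty, a single ray, or $L\setminus\{0\}$, the map $V\in L\mapsto \one_{C_j}(V)|V|$ is $1$--Lipschitz, giving $\bigl|\one_{C_j}(V_1)|V_1|-\one_{C_j}(V_2)|V_2|\bigr|\le|V_1-V_2|$ for each $j$. To get the partition sum you then observe that for fixed $x$ only the cones containing one of the two orientations of the plane spanned by $L$ (at most two values of $j$) contribute, and a short case check on rays shows their total is still at most $|V_1(x)-V_2(x)|$ — e.g.\ opposite rays in distinct cones contribute $|V_1|+|V_2|=|V_1-V_2|$. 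With that corrected pointwise bound, your Tonelli argument and the supremum over partitions give the stated total-variation estimate, and the $A_2=0$ case follows as you say.
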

\begin{proof}
  As above, we represent $A_i$ by $(E_i,V_i)$, let $E=E_1\cup E_2$, let $U\subset \OGr(d,n)$ be a Borel set, and let $C\subset \bigwedge^d\R^n$ be the corresponding cone.

  For $\cH^d$--a.e. $x\in E$, $V_1(x)$ and $V_2(x)$ are contained in a $1$--dimensional subspace $L\subset \bigwedge^d\R^n$. The intersection $L\cap C$ is either empty, a ray from $0$, or $L\setminus \{0\}$. In all three cases, the map  $V \in L \mapsto \one_C(V)|V| \in \R$ is distance-nonincreasing. Therefore, for $\cH^d$--a.e. $x\in \R^n$, 
  \begin{equation}\label{eq:dist-diff}
    \big|\one_C(V_1(x))|V_1(x)| - \one_C(V_2(x))|V_2(x)|\big| \le |V_1(x)-V_2(x)|.
  \end{equation}
  If we integrate this over $\R^n$, we obtain
  \begin{multline*}
    |\gamma_{A_1}(U) - \gamma_{A_2}(U)| \stackrel{\eqref{eq:cone-gaussian}}{\le} \int_E \big|\one_C(V_1(x))|V_1(x)| - \one_C(V_2(x))|V_2(x)|\big|\ud \cH^d(x) \\\le \int_E |V_1(x)-V_2(x)| \ud \cH^d(x)= \mass (A_1-A_2),
  \end{multline*}
  as desired.
\end{proof}

\section{Constructing surfaces with prescribed tangent planes}\label{sec2}

In this section, we construct surfaces whose weighted Gaussian images approximate a chosen measure $\mu$. We start with the special case that $\mu$ is a sum of point measures supported on oriented $d$-planes with rational slopes, then generalize to arbitrary measures. We say an oriented $d$--plane $P\in \OGr(d,n)$ has \emph{rational slope} if it has a basis of vectors with rational coefficients. Equivalently, $P$ has rational slope if there is an integer matrix $M\in M_{n-d,n}(\Z)$ such that $P=\ker M$. 

We will show the following results.

\begin{prop} \label{prop:cycles}
  Let $P_1,\dots, P_k\in \OGr(d,n)$ be oriented $d$-planes with rational slope. Let $m_1,\dots, m_k>0$ be coefficients such that 
  $\sum^{k}_{i=1} m_i \omega_{P_i} = 0$. Let $\mu := \sum^{k}_{i=1} m_i \delta_{P_i}$.
  Then for any $\epsilon>0$, there is a polyhedral $d$--cycle $A_\epsilon\in \Cpoly_d(\R^n;\Q)$ such that 
  $\|\gamma_{A_\epsilon} - \mu \|_{\TV}<\epsilon$.
  As $\epsilon\to 0$, $\supp A_\epsilon$ converges to $[0,1]^n$ in the Hausdorff metric and the corresponding varifolds $\cV(A_\epsilon)$ converge weakly in the sense of varifolds to $(\cH^n\rstr [0,1]^n)\otimes p_* \mu$. 
\end{prop}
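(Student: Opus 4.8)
The plan is to reduce everything to a single building block: a polyhedral $d$-cycle supported in (a slightly enlarged copy of) the unit cube whose weighted Gaussian image is a prescribed rational multiple of $\omega_P$ for a single rational-slope plane $P$, with a controlled error, and then to sum finitely many rescaled and translated copies of such blocks so that the individual Gaussian images add up (via Lemma~\ref{subadditivity} and Lemma~\ref{lem:dist-decrease}) to approximate $\mu=\sum_i m_i\delta_{P_i}$. The crucial point is that $\sum_i m_i\omega_{P_i}=0$, so once we have blocks realizing tangent measure $m_i\delta_{P_i}$ up to small error and up to small boundary, the boundaries can be cancelled (or made small) by a standard filling/telescoping argument, yielding an honest cycle.

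First I would construct the single-plane block. Fix $P\in\OGr(d,n)$ of rational slope, so $P=\ker M$ for an integer matrix $M$; by clearing denominators we may take the $d$-dimensional integer lattice $\Lambda_P=P\cap\Z^n$ to be a full-rank sublattice of $P$. Inside a large cube $[0,N]^n$ consider the family of parallel affine $d$-planes $P+v$, $v\in\Z^n$; each such plane meets $[0,N]^n$ in a convex $d$-polytope. Orienting each slice by $\omega_P$ and forming the polyhedral chain $S_N=\sum_v c_v[(P+v)\cap[0,N]^n]$ with suitable rational weights $c_v$, one gets a chain whose mass is (to leading order) proportional to $N^d$ times (number of lattice translates meeting the cube) $\sim N^{n-d}$, hence of total mass $\sim N^n$, all of whose tangent planes are exactly $P$; so $\gamma_{S_N}=(\text{mass})\,\delta_P$ exactly. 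Rescaling $[0,N]^n$ back down to the unit cube $[0,1]^n$ multiplies the mass by $N^{-n}$ but leaves the Gauss image the same up to normalization, so after choosing $N$ we obtain, inside $[0,1]^n$, a polyhedral $d$-chain $B_{i}$ with $\gamma_{B_i}=m_i\delta_{P_i}$ (adjusting the weight $c_v$ to hit $m_i$), supported within $\epsilon$ of $[0,1]^n$, and whose boundary $\partial B_i$ consists of pieces lying on $\partial[0,1]^n$ (the slices' boundaries on the cube faces) together with lower-order pieces. The key estimate is that $\mass(\partial B_i)$, or at least the part not cancelled in the sum, is $O(N^{-1})$ relative to $\mass(B_i)$, so it is negligible as $N\to\infty$.

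Next I would sum: set $A'=\sum_{i=1}^k B_i$. By Lemma~\ref{subadditivity}, $\gamma_{A'}\le\sum_i\gamma_{B_i}=\mu$, and by Lemma~\ref{lem:dist-decrease} the deficit is controlled by the overlap mass; arranging the $B_i$ to be mutually transverse and lying on generic parallel families, the overlaps have dimension $<d$ hence zero mass, so in fact $\gamma_{A'}=\mu$ (or within $\epsilon$ if we instead keep the blocks disjointly placed in small subcubes tiling $[0,1]^n$, which is the cleaner route and also forces $\cV(A')\to(\cH^n\rstr[0,1]^n)\otimes p_\sharp\mu$ because the supports equidistribute). Then $\partial A'=\sum_i\partial B_i$; the cube-face contributions telescope because $\sum_i m_i\omega_{P_i}=0$ forces the oriented slice-boundaries on each face of $[0,1]^n$ to represent a null-homologous (indeed boundary-cancelling) cycle on that face, and the remaining lower-order boundary of total mass $O(N^{-1})$ can be killed by adding a polyhedral filling $G$ with $\partial G=-\partial A'$ and $\mass(G)\le C\,N^{-1+1/d}$ (isoperimetric inequality for polyhedral chains, which one may invoke since it is classical), so that $A_\epsilon:=A'+G$ is a genuine cycle and $\|\gamma_{A_\epsilon}-\mu\|_{\TV}\le\|\gamma_{A'}-\mu\|_{\TV}+\mass(G)<\epsilon$ for $N$ large; moreover $\supp G$ stays within $O(N^{-1})$ of $\partial[0,1]^n\subset[0,1]^n$ so the Hausdorff and varifold convergence are unaffected.

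The main obstacle is the boundary bookkeeping in the single-plane block: showing that after cutting the parallel family of $d$-planes by the big cube and reassembling, the resulting boundary — living on $\partial[0,N]^n$ together with the ``end'' slices — has mass lower-order in $N$ and, more importantly, that when the $k$ blocks are combined the face-contributions genuinely cancel using only the hypothesis $\sum_i m_i\omega_{P_i}=0$. Concretely one wants a lemma saying: given rational planes $P_i$ with $\sum m_i\omega_{P_i}=0$, there is a polyhedral $d$-cycle in $\R^n$ whose Gauss image is $\sum m_i\delta_{P_i}$ and whose support lies in a bounded region — this is essentially a constructive version of the Burago--Ivanov existence result, and the work is in making the cancellation explicit (e.g.\ by a Minkowski-sum / zonotope construction in $\bigwedge^d$, or by an inductive ``cut and paste'' that removes one plane at a time). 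Once that bounded cycle exists, rescaling into $[0,1]^n$ and the density/equidistribution argument for the varifold limit are routine, using the distance-decreasing property of $A\mapsto\gamma_A$ to pass the $\TV$-estimate through the limit.
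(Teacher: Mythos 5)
Your overall architecture (families of parallel slices of rational planes through a large cube $[0,N]^n$, rescaling, and using Lemmas~\ref{subadditivity} and \ref{lem:dist-decrease} to control the Gaussian image) matches the paper's, but the step that actually produces a \emph{cycle} is where your argument breaks, and this is the entire content of the proposition. First, the claimed cancellation is false: the condition $\sum_i m_i\omega_{P_i}=0$ does not make the slice boundaries on the faces of the cube cancel, since the boundaries of slices parallel to different $P_i$'s lie on genuinely different $(d-1)$-planes in $\partial[0,N]^n$. Each family contributes boundary of $(d-1)$--mass $\sim N^{n-1}$, and after the rescaling $\rho_N$ and the weight normalization $N^{-(n-d)}$ this is $N^{n-1}\cdot N^{-(d-1)}\cdot N^{-(n-d)}=1$: the leftover boundary has normalized mass of order $1$, not $O(N^{-1})$. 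Second, with a boundary of normalized mass $O(1)$ spread over $\partial[0,1]^n$, the isoperimetric/cone filling $G$ has mass of order $1$ (your exponent $N^{-1+1/d}$ is in any case not what the isoperimetric inequality gives), and, worse, the Gaussian image of $G$ is completely uncontrolled, so the estimate $\|\gamma_{A_\epsilon}-\mu\|_{\TV}<\epsilon$ does not follow. The hypothesis $\sum_i m_i\omega_{P_i}=0$ only gives cancellation of the boundary flux \emph{on average}; converting that into a correction supported on $\partial[0,N]^n$ of mass $O(N^{n-1})$ is exactly the nontrivial point, and you yourself flag the needed lemma (``a bounded polyhedral cycle with Gauss image $\sum m_i\delta_{P_i}$'') as the main obstacle without proving it --- i.e.\ the heart of the proposition is deferred rather than established.

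The paper closes precisely this gap homologically: since $\int\omega_P\,\ud\mu(P)=0$, the cycle $S=\sum_i m_i T_{P_i}$ is null-homologous in the torus $\R^n/\Z^n$, so there is a $(d+1)$--chain $Q$ with $\partial Q=S$; lifting to $\R^n$ and setting $C_N=\partial(\tilde{Q}\cap[E_N])=\tilde{S}\rstr E_N+\tilde{Q}\cap\partial[E_N]$ produces a chain that is a cycle \emph{by construction}, in which the ``defect'' term $\tilde{Q}\cap\partial[E_N]$ is a $\Z^n$--periodic object cut by $\partial E_N$ and therefore has mass $\lesssim N^{n-1}$, i.e.\ normalized mass $O(N^{-1})$, with no appeal to an isoperimetric inequality and with the Gaussian error controlled by Lemma~\ref{lem:dist-decrease}. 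If you want to salvage your route, you must replace the ``telescoping plus generic filling'' step by some such periodic boundary correction (equivalently, prove the bounded-cycle lemma you state); as written, the proof does not go through.
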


\begin{prop}\label{prop:cycles-real-chains}
  Let $P_1,\dots, P_{k}\in \OGr(d,n)$ be oriented $d$-planes with rational slopes. Let $P_0\in \OGr(d,n)$ be the coordinate plane $\R^d\subset \R^n$ with an orientation and $D=[0,1]^d\subset P_0$ be a unit $d$-cube.   
  Let  $m_1,\dots,m_{k} > 0$ be coefficients such that $\sum^{k}_{i=1} m_i \omega_{P_i} = \omega_{P_0}$. Let $\mu := \sum^{k}_{i=1} m_i \delta_{P_i}$. 
  
  Then for any $\epsilon > 0$, there exists $A_\epsilon\in \Cpoly_d(\R^n;\Q)$ such that $\partial A_\epsilon=\partial [D]$  and $\|\gamma_{A_\epsilon} - \mu\|_{\TV} < \epsilon$. Furthermore, as $\epsilon \to 0$, $\supp A_{\epsilon}$ converges to $D$ in the Hausdorff metric and the corresponding varifolds $\cV(A_\epsilon)$ converge weakly in the sense of varifolds to $V=(\cH^d\rstr D)\otimes p_*\mu$.
\end{prop}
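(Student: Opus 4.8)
The plan is to split the proof into two steps. First I would build a \emph{crude filling}: a polyhedral chain $A\in\Cpoly_d(\R^n;\Q)$ with $\partial A=\partial[D]$ and $\|\gamma_A-\mu\|_{\TV}<\epsilon$, with no control whatsoever on $\supp A$ or on $\cV(A)$. Then I would upgrade it by a rescaling-and-tiling procedure that, while preserving the boundary and the weighted Gaussian image, collapses the support onto $D$ and forces the associated varifolds to converge to $(\cH^d\rstr D)\otimes p_\sharp\mu$. The first step is the heart of the matter; the second is soft.

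For the crude filling I would reduce to Proposition~\ref{prop:cycles}. Let $\bar P_0$ denote the coordinate plane $\R^d\subset\R^n$ with the orientation opposite to $P_0$, so $\omega_{\bar P_0}=-\omega_{P_0}$ and $\bar P_0$ has rational slope, and set $\nu:=\mu+\delta_{\bar P_0}=\sum_{i=1}^k m_i\delta_{P_i}+\delta_{\bar P_0}$. Then $\nu$ is a positive measure supported on planes of rational slope with $\int\omega_P\ud\nu(P)=\omega_{P_0}-\omega_{P_0}=0$, so Proposition~\ref{prop:cycles} produces a polyhedral cycle $B\in\Cpoly_d(\R^n;\Q)$, supported in $[0,1]^n$, with $\|\gamma_B-\nu\|_{\TV}$ as small as we please. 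At this point I would use not merely the statement of Proposition~\ref{prop:cycles} but the structure of its construction, which realizes each atom of $\nu$ by a flat polyhedral ``sheet'' in the corresponding direction: I would arrange the sheet carrying the $\bar P_0$-mass to be exactly $-[D]$, with all other mass of $B$ avoiding $D$. (Equivalently, one runs the sheets-and-connectors construction over $[0,1]^n$ using only the directions $P_1,\dots,P_k$: since $\sum m_i\omega_{P_i}=\omega_{P_0}$, the boundary of the union of those sheets together with the thin connectors glued along their boundaries equals $\partial[D]$ rather than $0$, so $D$ itself appears as the ``deficit''.) Either way, set $A:=B+[D]$. Then $\partial A=\partial B+\partial[D]=\partial[D]$, and because the only mass of $B$ lying on $D$ is the summand $-[D]$, the pieces $-[D]$ and $B':=B+[D]$ are $\cH^d$-essentially disjoint, so $\gamma$ is additive over them and $\gamma_A=\gamma_{B'}=\gamma_B-\gamma_{-[D]}=\gamma_B-\delta_{\bar P_0}$; hence $\|\gamma_A-\mu\|_{\TV}=\|\gamma_B-\nu\|_{\TV}<\epsilon$. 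Also $\supp A\subset[0,1]^n$, and, since $\gamma_A\ge 0$ is the pushforward of the mass of $A$, $\mass A=\|\gamma_A\|_{\TV}\in(\sum_i m_i-\epsilon,\ \sum_i m_i+\epsilon)$.

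Next I would tile. Subdivide $D=[0,1]^d$ into the $N^d$ subcubes $D_j$ of side $1/N$, and let $A_j$ be the image of $A$ under the similarity that scales by $N^{-1}$ and then translates $N^{-1}D$ onto $D_j$; thus $A_j\in\Cpoly_d(\R^n;\Q)$, $\partial A_j=\partial[D_j]$, $\gamma_{A_j}=N^{-d}\gamma_A$, and $A_j$ is supported in a cube of side $1/N$ containing $D_j$. Set $A_\epsilon:=\sum_j A_j$. Since the $D_j$ have disjoint interiors, so do the cubes supporting the $A_j$; the interior $(d-1)$-faces of the various $\partial[D_j]$ cancel in pairs, so $\partial A_\epsilon=\sum_j\partial[D_j]=\partial[D]$, and $\gamma$ is additive over the $A_j$, so $\gamma_{A_\epsilon}=\sum_j N^{-d}\gamma_A=\gamma_A$ and $\|\gamma_{A_\epsilon}-\mu\|_{\TV}<\epsilon$. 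Moreover $\supp A_\epsilon\subset[0,1]^d\times[0,1/N]^{n-d}$ while $\supp A_\epsilon\supset\bigcup_j\partial D_j$, which is $(\sqrt d/N)$-dense in $D$, so $\supp A_\epsilon\to D$ in the Hausdorff metric as $N\to\infty$. Finally $\mass A_\epsilon=\mass A\to\sum_i m_i=\cH^d(D)\,\|\mu\|_{\TV}$, the $N^d$ congruent copies $A_j$ equidistribute this mass spatially over $D$ while each contributes the plane-marginal $N^{-d}p_\sharp\gamma_A$, so a routine estimate using a modulus of continuity of the test functions gives $\cV(A_\epsilon)\to(\cH^d\rstr D)\otimes p_\sharp\gamma_A$; since $\|p_\sharp\gamma_A-p_\sharp\mu\|_{\TV}<\epsilon$, letting $\epsilon\to 0$ and $N\to\infty$ along a diagonal sequence produces a sequence with all the asserted properties.

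The main obstacle is the crude filling, and within it the boundary bookkeeping: one must organize the flat sheets in the directions $P_i$ and the thin connectors glued along their boundaries so that the total boundary is \emph{exactly} $\partial[D]$ --- this is precisely where the hypothesis $\sum m_i\omega_{P_i}=\omega_{P_0}$ enters --- while keeping the total mass of the connectors below $\epsilon$ and placing the sheets in general position so that $\gamma$ is additive over them (which yields $\gamma_{\sum_i m_i S_i}=\mu$). This is the ``filling'' analogue of the construction underlying Proposition~\ref{prop:cycles}; by contrast, the rescaling-and-tiling step and the mass and varifold bookkeeping that accompany it are routine.
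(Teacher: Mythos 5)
Your second step (rescale the crude filling by $1/N$, tile $N^d$ copies over the subcubes of $D$, cancel interior boundary faces, and pass to a diagonal sequence) is sound and would indeed upgrade any filling with the right Gaussian image to one with the stated support and varifold behaviour; it is a legitimate alternative to the way the paper gets this control (the paper builds the support/varifold control directly into the construction rather than post-processing). The problem is the first step, which you correctly identify as the heart of the matter but do not actually carry out. Proposition~\ref{prop:cycles} applied to $\nu=\mu+\delta_{-P_0}$ produces, by its very construction, a cycle whose $(-P_0)$--oriented part is \emph{not} a single sheet: it is the rescaled restriction of the periodic chain $\tilde T_{-P_0}=\Z^{n-d}P_0$ to the box, i.e.\ roughly $N^{n-d}$ parallel translates of a unit $d$--cube, each carrying the tiny coefficient $N^{-(n-d)}$. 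There is no way to ``arrange the sheet carrying the $\bar P_0$--mass to be exactly $-[D]$'' within that construction, because the periodicity that makes $\partial\tilde Q=\tilde S$ work forces the $-P_0$ mass to be spread over all those translates; concentrating it on one unit cube while keeping the chain closed (up to $\partial[D]$) is precisely the new content of Proposition~\ref{prop:cycles-real-chains}, not a cosmetic variant of Proposition~\ref{prop:cycles}.

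Your parenthetical alternative (``run the sheets-and-connectors construction using only $P_1,\dots,P_k$; the deficit is $\partial[D]$'') hides the same difficulty: after removing the $-P_0$ sheets one is left with boundary $\sum_{v}v\,\partial[[0,N]^d]$, a large family of translated cube boundaries, and one must join them to a single copy of $\partial[[0,N]^d]$ by connectors whose mass is negligible compared to the total mass $\approx N^dM^{n-d}$ of the chain. In a symmetric box $[0,N]^n$ the connectors cost $\approx N\cdot N^{d-1}$ each and $\approx N^{n-d}$ are needed, i.e.\ mass comparable to the whole chain, so the Gaussian image is destroyed. The paper resolves this by working in the asymmetric box $E_{N,M}=[0,N]^d\times[0,M]^{n-d}$, removing all $M^{n-d}$ copies of $-[[0,N]^d]$, and adding annuli $R_v\subset\partial E$ of total mass $\lesssim M^{n-d+1}N^{d-1}$, which is $o(N^dM^{n-d})$ exactly when $M/N\to 0$ (e.g.\ $N=M^2$). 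Your proposal never supplies this (or any) mechanism for keeping the connector mass below $\epsilon$, so the crude filling --- and with it the whole argument --- is asserted rather than proved.
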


The existence of $A_\epsilon$ such that $\partial A_\epsilon = \partial [D]$ and $\|\gamma_{A_\epsilon} - \mu\|_{\TV} < \epsilon$ is essentially due to Burago and Ivanov \cite{BurIva}, but we provide a new constructive proof which lets us control the geometry of $A_\epsilon$. Finally, we use these new methods to extend Proposition \ref{prop:cycles-real-chains} to multigraphs.

\begin{prop}\label{prop:multigraph-fillings}
  Let $P_0\in \OGr(d,n)$ be the coordinate $d$--plane and let $P_1,\dots, P_{k}\in \OGr^+(d,n)$ be positively oriented $d$-planes with respect to $P_0$ with rational slope. Let $D$ be a unit $d$-cube in $P_0$.
  Let $m_1,\dots,m_{k} > 0$ be coefficients such that $\sum^{k}_{i=1} m_i \omega_{P_i} = \omega_{P_0}$ and let $\mu = \sum^{k}_{i=1} m_i \delta_{P_i}$. 
  
  Then for any $\epsilon > 0$, there exists $B_\epsilon\in \Cpoly_d(\R^n;\Q)$ such that $\partial B_\epsilon=\partial [D]$, $\|\gamma_{B_\epsilon} - \mu\|_{\TV} < \epsilon$, and every tangent plane to $B_\epsilon$ is positively oriented with respect to $P_0$.
\end{prop}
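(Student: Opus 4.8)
The plan is to obtain Proposition~\ref{prop:multigraph-fillings} by refining the construction behind Proposition~\ref{prop:cycles-real-chains}, using the hypothesis $P_1,\dots,P_k\in\OGr^+(d,n)$ to keep every tangent plane positively oriented. I would begin with the standard reductions. Since $P_0=\R^d$, each $P_i\in\OGr^+(d,n)$ is the graph of a rational linear map $L_i\from\R^d\to\R^{n-d}$, and with $c_i:=\langle\omega_{P_i},\omega_{P_0}\rangle\in(0,1]$ and $t_i:=m_i c_i$ the hypothesis $\sum_i m_i\omega_{P_i}=\omega_{P_0}$ becomes the \emph{polyconvexity relation} $\sum_i t_i\,\bigwedge M(L_i)=\bigwedge M(0)$ with $t_i>0$ and $\sum_i t_i=1$. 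After a small perturbation of the $m_i$ — the change in $\gamma$ and in the resulting chain being controlled by Lemma~\ref{lem:dist-decrease} and absorbed into $\epsilon$ — one may assume the $t_i$ and all relevant minors are rational, so that everything is carried out on a rational lattice and produces a chain in $\Cpoly_d(\R^n;\Q)$. I would then use the structure of the chain $A_\epsilon$ from Proposition~\ref{prop:cycles-real-chains}: it is assembled from \emph{bulk} pieces, each parallel to one of the $P_i$ and carrying, in total, mass $\approx m_i$ on $P_i$, together with \emph{auxiliary} pieces (the connecting walls and any pieces parallel to coordinate $d$–planes) whose combined mass tends to $0$ as $\epsilon\to0$ — this is exactly what makes $\gamma_{A_\epsilon}\to\mu$. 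Orienting each bulk piece consistently with $\omega_{P_i}$ makes it a graph direction over $P_0$ (its $P_0$–minor equals $c_i>0$), so the only tangent planes that need fixing are those of the auxiliary pieces, typically the nearly vertical connecting walls.

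The heart of the argument is a local ``straightening'' move: replace each auxiliary piece, together with thin collars of the bulk sheets adjacent to it, by a \emph{signed} sum $\sum_\alpha\pm[\Lambda_{u_\alpha}]$ of graph pieces, where each $u_\alpha$ is a piecewise–linear Lipschitz map over a piece of $P_0$ taking only graph directions, chosen so that (i) the boundary of the replacement equals the boundary of the original configuration, so that summing over all auxiliary pieces yields $\partial B_\epsilon=\partial A_\epsilon=\partial[D]$; (ii) the weighted Gaussian image is changed, in total over all replacements, by only $o(1)$ in total variation; and (iii) everything is done over the rational lattice, keeping the coefficients in $\Q$. Granting such a move, the bulk pieces still contribute $\approx m_i$ to $\gamma$ on each $P_i$ while the net effect of the replacements is negligible, so Lemmas~\ref{subadditivity} and~\ref{lem:dist-decrease} give $\|\gamma_{B_\epsilon}-\mu\|_{\TV}<\epsilon$; since every surviving piece is a bulk piece or a graph $[\Lambda_{u_\alpha}]$, every tangent plane of $B_\epsilon$ is positively oriented; and one checks along the way that $\supp B_\epsilon\to D$ in the Hausdorff metric, matching Theorem~\ref{thm:construct}.

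The step I expect to be the main obstacle is constructing the straightening move, and in particular establishing that it must use \emph{signed} coefficients. Indeed, a positively oriented tangent plane projects to $P_0$ preserving orientation, so a polyhedral chain of such pieces with \emph{positive} coefficients projects under the orthogonal projection $\pi\from\R^n\to P_0$ to an all–positive $d$–chain in $\R^d$; if that projection is a cycle it must vanish by constancy, and since a nearly vertical wall has $\pi_\sharp$–image $0$, no positively weighted graph chain can have the same boundary as such a wall. Hence the replacement must cancel under $\pi_\sharp$ (e.g.\ be of the form $[\Lambda_u]-[\Lambda_v]$ over a common region), and one must verify that the auxiliary pieces produced by Proposition~\ref{prop:cycles-real-chains} admit such a signed-graph replacement with matching boundary — equivalently, that the configuration of two bulk sheets joined by a wall can be rewritten, modulo the lattice and an $o(1)$ change in Gaussian image, as a signed sum of graphs. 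The rank obstruction that forbids doing this with a single-valued graph (the same phenomenon behind Šverák's example) is precisely what forces the signed structure here. When an auxiliary piece is not directly of the required ``product'' form, one first re-routes it into a sum of product walls using the transverse–intersection calculus of Section~\ref{sec1}, in particular the Leibniz rules \eqref{eq:leibniz-poly}--\eqref{eq:top-dim-leibniz}, while keeping its mass controlled; checking that this can always be arranged is the delicate point.
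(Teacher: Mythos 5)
There is a genuine gap, and it sits exactly where you flagged ``the delicate point.'' Your constraint (i) --- that each local replacement have the same boundary as the wall it replaces, so that $\partial B_\epsilon=\partial A_\epsilon=\partial[D]$ --- is, as your own projection/constancy argument shows, incompatible with positive orientation: no positively weighted chain of graph pieces can bound what a vertical wall bounds. You then conclude the replacement must be a \emph{signed} sum of graphs, but a signed sum reintroduces negatively oriented tangent planes, so the final chain satisfies the proposition only if every negative graph piece cancels exactly against positively oriented bulk. You never establish this cancellation, and it is not clear it can be arranged: in the chain $A_\epsilon$ of Proposition~\ref{prop:cycles-real-chains} the auxiliary pieces are the annuli $R_v$ and $\tilde{Q}\cap\partial[E]$, the adjacent bulk sheets are parallel to the $P_i$ (not to the directions a natural graph replacement of a wall would use), and their coefficients are arbitrary positive numbers $m_i$, so there is no reason the negative parts of a replacement $[\Lambda_u]-[\Lambda_v]$ are dominated by, or even geometrically coincide with, nearby bulk. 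So the proposal identifies the obstruction correctly but does not overcome it.

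The paper resolves this by giving up your constraint (i) rather than fighting it. It keeps the negatively oriented pieces (the planes $-\tilde{T}_{P_0}\rstr E$ and the boundary term $\tilde{Q}\cap\partial[E]$, after choosing a sheared region $E=\Xi_{N,M}F$ and perturbing $Q$ so its tangent planes are transverse to $P_0^\perp$) and \emph{adds} positively oriented Lipschitz graphs that cover their supports: graphs $\Lambda_{f_v}$ containing $vP_0\cap E$, and graphs $\Lambda_{g_{i,j}}$ produced by Lemma~\ref{lem:cover-XR} and Lemma~\ref{lem:extend-graphs}, taken with a coefficient $c$ at least the largest coefficient of $\tilde{Q}\cap\partial[E]$, so that the sum $B_0=A_0+\Upsilon$ has only positively oriented tangent planes. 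The price is that the boundary is no longer $\partial[D]$ but $\alpha\,\partial[I^d_{N+3M}]$ with $\alpha=\#J^{n-d}_{M+3}+cL\ell$, since all the added graphs have boundary on $P_0$; this is repaired globally by taking $\alpha^{-1}B_0$ and rescaling by $\rho_{2N+6M}$, and the Gaussian-image error is controlled via Lemma~\ref{lem:dist-decrease} because the added and covered pieces have total mass $O(\epsilon V)$ while the bulk contributes $V\mu$, with $\alpha(2N+6M)^d=V+O(\epsilon V)$. This ``cover and renormalize'' mechanism --- changing the boundary to a large multiple of a large cube and dividing out the multiplicity --- is the key idea missing from your plan; without it, the local boundary-preserving straightening move you propose cannot produce a chain all of whose tangent planes are positively oriented.
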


In fact, $B_\epsilon$ is a multiple of the fundamental class of a multigraph, as showed in the following lemma.
\begin{lemma}
    Let $D$ be the unit cube in $\R^d$. If $B\in \Cpoly_d(\R^n;\Q)$ is such that $\partial B=\partial [D]$ and $\cH^d$-almost every tangent plane to $B$ is positively oriented with respect to $\R^d$, then there exists $0\neq Q\in \N$ and a $Q$--valued Lipschitz function $u\from D\to \mathcal{A}_Q(\mathbb{R}^{n-d})$ such that $B=Q^{-1}[\Lambda_u]$.
\end{lemma}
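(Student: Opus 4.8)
The plan is to show that $B$, viewed as a current over $D$, has constant integer multiplicity along almost every vertical fiber, and that this multiplicity together with the fiberwise support defines the desired $Q$--valued Lipschitz function. Write $\pi\from \R^n = \R^d\times\R^{n-d}\to \R^d$ for the projection onto the first factor and $\pi'\from \R^n\to \R^{n-d}$ for the projection onto the second. Since $\cH^d$--a.e.\ tangent plane to $B$ is positively oriented with respect to $\R^d$, the projection $\pi$ restricted to $\supp B$ is, on the rectifiable set carrying $B$, a local diffeomorphism with positive Jacobian onto its image; in particular $\pi_\sharp B$ makes sense and is again a polyhedral $d$--chain in $\R^d$ with rational coefficients. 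First I would compute $\partial(\pi_\sharp B) = \pi_\sharp(\partial B) = \pi_\sharp(\partial[D]) = \partial[D]$, using that $\pi$ fixes $D\subset\R^d$ pointwise. Thus $\pi_\sharp B - [D]$ is a $d$--cycle in $\R^d$, hence (being a top-dimensional polyhedral chain with rational coefficients and compact support, so an element of $\Cpoly_d(\R^d;\Q)$ with zero boundary) it must be $0$: a nonzero top-dimensional chain has nonzero boundary unless it is supported on all of $\R^d$, which is impossible for a compactly supported chain. Therefore $\pi_\sharp B = [D]$, i.e.\ the algebraic multiplicity of $B$ over a.e.\ point of $D$ (counting each sheet with the sign of its Jacobian, which is $+1$ since all tangent planes are positively oriented) equals $1$.

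Next I would upgrade this to a pointwise statement. Triangulate so that $B = \sum_j a_j[\sigma_j]$ with $a_j\in\Q$ and $\sigma_j$ a $d$--simplex projecting injectively (with positive Jacobian) under $\pi$ onto a $d$--simplex $\pi(\sigma_j)\subset\R^d$; refining the triangulation of $D$ if necessary, we may assume that for each $d$--simplex $\Delta$ of the triangulation of $D$, each $\pi(\sigma_j)$ either contains $\Delta$ or is disjoint from its interior. For a fixed $\Delta$ and $x\in\inter\Delta$, let $j_1,\dots,j_r$ be the indices with $\Delta\subset\pi(\sigma_{j_\ell})$. The identity $\pi_\sharp B = [D]$ forces $\sum_{\ell=1}^r a_{j_\ell} = 1$ for every such $\Delta$. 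The key point is that all the $a_{j_\ell}$ must be positive: if some $a_j<0$ then the sheet $\sigma_j$ would be traversed with negative orientation under $\pi$, contradicting that $\cH^d$--a.e.\ tangent plane of $B$ is positively oriented. (One has to be slightly careful: cancellation between oppositely oriented sheets lying in the same plane could a priori occur, but because all orientations agree with $\R^d$, summands $a_j[\sigma_j]$ with overlapping projections and the same affine span add rather than cancel, so after collecting such terms all surviving coefficients are strictly positive rationals.) Let $Q\in\N$ be a common denominator of all the $a_j$, so $Qa_j\in\N$, and over each $\Delta$ we have $\sum_\ell Qa_{j_\ell} = Q$. Define $u\from D\to\mathcal A_Q(\R^{n-d})$ by $u(x) = \sum_\ell Qa_{j_\ell}\,\delta_{\pi'(\sigma_{j_\ell}\cap\pi^{-1}(x))}$, counting each point with multiplicity $Qa_{j_\ell}$; the total multiplicity is $Q$ for a.e.\ $x$, and this extends continuously to a $Q$--valued map.

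Finally I would check that $u$ is Lipschitz and that $B = Q^{-1}[\Lambda_u]$. Lipschitz continuity follows because on each simplex $\sigma_j$ the map $x\mapsto \pi'(\sigma_j\cap\pi^{-1}(x)) = (\pi'|_{\sigma_j})\circ(\pi|_{\sigma_j})^{-1}(x)$ is affine, hence Lipschitz with constant bounded by the slopes of the finitely many simplices; across shared faces of adjacent simplices of $D$ the fiberwise measures match up because $B$ is a chain (the sheets join along $(d-1)$--faces), so the Wasserstein distance $d_{\mathsf W}(u(x),u(y))$ is controlled by $C|x-y|$. Along a vertical fiber over generic $x\in\inter\Delta$, the current $B$ restricted to $\pi^{-1}(x)$ reduces to the weighted sum $\sum_\ell a_{j_\ell}$ of unit point masses at the points $\pi'(\sigma_{j_\ell}\cap\pi^{-1}(x))$, which is exactly $Q^{-1}$ times the integral $0$--current $\sum_\ell Qa_{j_\ell}[\![\,\pi'(\sigma_{j_\ell}\cap\pi^{-1}(x))\,]\!]$ obtained by slicing $[\Lambda_u]$; since two $d$--currents over $D$ with positively oriented tangent planes that agree after slicing by a.e.\ vertical hyperplane must coincide (by the constancy theorem applied fiberwise, or directly by integrating $d$--forms of the shape $dx_1\wedge\cdots\wedge dx_d\wedge(\text{fiber form})$ and using $\pi_\sharp B=[D]$), we conclude $B = Q^{-1}[\Lambda_u]$. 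The main obstacle is the bookkeeping in the middle step, namely ruling out orientation-reversing cancellation and extracting a \emph{single} integer $Q$ that works uniformly over $D$ while keeping the resulting $Q$--valued assignment continuous across the $(d-1)$--skeleton of the triangulation; everything else is a routine application of the slicing and boundary formulas for polyhedral chains developed earlier in the paper.
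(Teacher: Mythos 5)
Your proof is correct and reaches the desired conclusion, but its second half takes a genuinely different route from the paper. The first half is essentially the same: the paper also takes $Q$ to be a common denominator (setting $B'=QB\in \Cpoly_d(\R^n;\Z)$), uses the constancy theorem — your ``compactly supported top-dimensional cycle is zero'' argument is exactly that — to get $\pi_\sharp B'=Q[D]$, and uses positivity of the orientations to see that each vertical slice is a $0$--chain with positive integer coefficients and mass $Q$, i.e.\ an element of $\mathcal{A}_Q(\R^{n-d})$. Where you diverge is the Lipschitz bound and the identification $B=Q^{-1}[\Lambda_u]$: the paper slices $B'$ along lines $L\subset\R^d$, notes that $[\pi^{-1}(L)]\cap B'$ has boundary $\pm Q\,\partial[L\cap D]$ and consists of positively oriented segments, decomposes it into $Q$ single-valued Lipschitz graphs, and gets a uniform Lipschitz bound on a.e.\ line, hence on $D$; you instead exploit polyhedrality directly, refining the triangulation so that over each cell $u$ is a superposition of affine sheets with positive multiplicities summing to $Q$, and gluing across $(d-1)$--faces. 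Your route is more explicit and makes the final identification essentially tautological — over each cell $\Delta$ one has $[\Lambda_u]\rstr\pi^{-1}(\Delta)=\sum_\ell Qa_{j_\ell}\,[\sigma_{j_\ell}]\rstr\pi^{-1}(\Delta)=QB\rstr\pi^{-1}(\Delta)$ by the definition of the graph current, so your closing detour through vertical slices and a fiberwise constancy argument is not needed. The one step you should spell out is the matching across an interior $(d-1)$--face $\tau$ of the projected triangulation: ``because $B$ is a chain'' is not by itself the reason; rather, $\partial B=\partial[D]$ has no support over $\inter D$, so the boundary faces over $\pi^{-1}(\tau)$ contributed by sheets on the two sides of $\tau$ must cancel, and since all multiplicities are positive this cancellation can only pair left sheets with right sheets of equal total multiplicity at each position, which is exactly the continuity of the fiber measure. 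With that filled in, uniform Lipschitz bounds on each closed cell plus continuity and convexity of $D$ give the global Lipschitz bound, and the lemma follows.
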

\begin{proof}
 Let $\pi\from \R^n\to \R^d$ be the orthogonal projection onto $\R^d$ and for $v\in \R^d$, let $\tau_v\from \R^n\to \R^n$ be the translation by $v$.
 Let $0\neq Q\in \N$ be such that $QB\in \Cpoly_d(\R^n;\Z)$ and let $B'=QB$.

 Then $\partial(\pi_\sharp(B'))=\pi_\sharp(\partial B') = Q \partial [D]$, and the constancy theorem for currents implies that 
$$
     \pi_\sharp(B')=Q [D].
$$
 
 Since $\cH^d$-almost every tangent plane to $B$ is positively oriented with respect to $\R^d$, we deduce that $D=\supp(\pi_\sharp(B'))=\pi(\supp(B'))$.
 In fact, if $v\in D$ and $\mathbb{R}^{n-d}$ is transverse to $(\tau_{-v})_\sharp(B')$, then  $[\mathbb{R}^{n-d}]\cap (\tau_{-v})_\sharp(B')$ is a polyhedral 0--chain with positive integer coefficients and mass $Q$, i.e., an element of $\mathcal{A}_Q(\mathbb{R}^{n-d})$. Define
 $$\tilde{u}(v) = [\mathbb{R}^{n-d}]\cap (\tau_{-v})_\sharp(B')\in \mathcal{A}_Q(\mathbb{R}^{n-d});$$
 this is defined for $\cH^d$-almost every $v\in D$.

 We show that $\tilde{u}$ is Lipschitz by slicing $B'$ into $1$--dimensional chains.
 For almost every oriented line $L$ through $D$, the intersection $B'_L := [\pi^{-1}(L)]\cap B'$ is equal to the graph of $\tilde{u}$ on $L\cap D$ and, by \eqref{eq:leibniz-poly},
 $$\partial B'_L = [\pi^{-1}(L)]\cap \partial[B'] = [\pi^{-1}(L)]\cap Q\partial [D] = Q \partial [L\cap D].$$ 
 The chain $B'_L$ is made up of oriented line segments in $\pi^{-1}(L)$; we say that a segment $S$ is positively oriented with respect to $L$ if $\pi$ sends $S$ to a segment of $L$ with positive orientation. Since the polyhedra making up $B$ are positively oriented with respect to $\R^d$, the segments making up $B'_L$ are positively oriented with respect to $L$. 
 
 It follows that there is no cycle $C\ne 0$ such that $\mass(B'_L)=\mass(C)+\mass(B'_L-C)$. Indeed, assume by contradiction such a cycle $C$ exists. Since $\cH^d$--almost every tangent plane to $B$ is positively oriented with respect to $\R^d$, then for almost every line $L$ through $D$ we have that $\cH^1$-almost every tangent line to $B'_L$ is positively oriented with respect to $[L\cap D]$.  By the Smirnov decomposition theorem \cite[Theorem A-Theorem C]{Smirnov}, the tangent lines to $C$ and to $B'_L$ coincide $\cH^1$--almost everywhere. Therefore, $\pi_\sharp(C)$ is a $1$--cycle in $L$ with nonnegative multiplicity, so $\pi_\sharp(C)=0$, and thus $C=0$.
 
 We conclude from Smirnov decomposition theorem \cite[Theorem A-Theorem C]{Smirnov} that we can decompose $B'_L$ into $Q$ $1$--dimensional polyhedral chains with boundary $[L\cap D]$, each of which consists of positively-oriented segments. Each such chain is the graph of a Lipschitz function on $L\cap D$, so $B'_L$ is the graph of a $Q$--valued Lipschitz function, and $\tilde{u}$ is Lipschitz on $L$, with constant depending only on $B$. Since $\tilde{u}$ is uniformly Lipschitz on almost every line, it can be extended to a Lipschitz function $u$ on $D$, and $B=Q^{-1}[\Lambda_{u}]$.
\end{proof}

The following lemma lets us generalize Propositions~\ref{prop:cycles}, \ref{prop:cycles-real-chains}, and \ref{prop:multigraph-fillings} to arbitrary measures. 
\begin{lemma}\label{lem:rational-approx}
  Let $\mu \in \mathcal{M}(\OGr(d,n))$. For any $\epsilon>0$, there are $P_1,\dots, P_k\in \OGr(d,n)$ and coefficients $a_1.\dots, a_k>0$ such that if $\mu' = \sum a_i \delta_{P_i}$, then $\mu(\OGr(d,n)) = \mu'(\OGr(d,n))$, $d_{\mathsf{W}}(\mu' , \mu)< \epsilon$, and $\int \omega_P \ud \mu'(P)$ is a multiple of $\int \omega_P \ud \mu(P)$.

  Furthermore, if there are $v>0$ and $P_0\subset \R^n$ such that $\int \omega_P \ud \mu(P) = v \omega_{P_0}$ and the support of $\mu$ consists of planes that are positively oriented with respect to $P_0$, then we can choose the $P_i$'s to be positively oriented with respect to $P_0$ as well.
\end{lemma}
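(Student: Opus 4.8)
The plan is to approximate $\mu$ by a finite sum of point masses on planes of rational slope while preserving the total mass and the barycenter (up to a positive scalar), and while keeping positivity of orientation if it was present. The natural first step is to reduce to the case where $\mu$ is itself a finite sum of point masses: since $\OGr(d,n)$ is compact, one can partition it into finitely many Borel sets $E_1,\dots,E_m$ of diameter less than $\epsilon/2$, pick a point $R_j\in \supp(\mu\rstr E_j)$ in each piece, and replace $\mu$ by $\nu := \sum_j \mu(E_j)\,\delta_{R_j}$; this already has $\nu(\OGr(d,n))=\mu(\OGr(d,n))$ and $d_{\mathsf W}(\nu,\mu)<\epsilon/2$, but $\int\omega_P\ud\nu$ need not equal $\int\omega_P\ud\mu$ and the $R_j$ need not have rational slope. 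So the real content is: given a finite measure $\nu=\sum_j b_j\delta_{R_j}$ with $b_j>0$, perturb each $R_j$ slightly to a rational-slope plane $P_j$ and adjust the weights to $a_j>0$ so that the barycenter becomes exactly a (positive) multiple of $\int\omega_P\ud\mu(P)$, while the total mass is preserved and the Wasserstein distance stays below $\epsilon$.

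The key device is a dimension count / open-mapping argument in $\bigwedge^d\R^n$. Fix the target vector $v_0 := \int\omega_P\ud\mu(P)\in\bigwedge^d\R^n$ (if $v_0=0$ the centered case is easier and handled separately below). Choose the rational-slope planes $P_j$ close enough to $R_j$ that the simple vectors $\omega_{P_j}$ are close to $\omega_{R_j}$; since rational-slope planes are dense in $\OGr(d,n)$ this costs nothing in Wasserstein distance. Now consider the map $(a_1,\dots,a_k)\mapsto \sum_j a_j\omega_{P_j}$ from positive weight vectors to $\bigwedge^d\R^n$. By adding enough auxiliary rational-slope planes near the $R_j$'s (indeed, for each $R_j$ we may take a small cluster of rational planes whose convex hull contains $\omega_{R_j}$ in its relative interior), the vectors $\{\omega_{P_j}\}$ positively span the affine hull of $\{\omega_{R_j}\}$ together with $v_0$; a standard Carathéodory-type argument then produces positive weights $a_j$ with $\sum_j a_j\omega_{P_j}$ equal to $\lambda v_0$ for some $\lambda>0$ and $\sum_j a_j = \nu(\OGr(d,n))=\mu(\OGr(d,n))$ — one has two linear constraints (prescribing the direction of the barycenter, and the total mass) and enough free positive parameters, and the original weights $b_j$ give a feasible point in the relative interior, so nearby solutions with all $a_j>0$ exist. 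Rescaling all $a_j$ by a single positive constant makes $\sum a_j=\mu(\OGr(d,n))$ exact while keeping the barycenter a positive multiple of $v_0$; since the $P_j$ are uniformly close to the $R_j$ and the weights are a small perturbation of the $b_j$, the Wasserstein distance to $\nu$, hence to $\mu$, is at most $\epsilon/2$ more, i.e.\ below $\epsilon$. The positivity claim comes for free: if every $R_j$ is positively oriented with respect to $P_0$, then the nearby rational planes $P_j$ are too, because $\OGr^+(d,n)$ is open.

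For the centered case $\int\omega_P\ud\mu(P)=0$, the same scheme applies with the single linear constraint that the barycenter vanish; here "a multiple of $\zero$" is automatic, so one only needs $\sum_j a_j\omega_{P_j}=0$ with $a_j>0$ and $\sum a_j=\mu(\OGr(d,n))$. This is feasible as long as the rational-slope cluster around each $R_j$ is rich enough that $0$ lies in the interior of the cone spanned by all the $\omega_{P_j}$, which again holds once the clusters are fine enough, because $0$ lies in the interior of the cone spanned by $\{\omega_{R_j}\}$ (it is a positive combination, and by refining we may assume the $\omega_{R_j}$ are not all on one side of any hyperplane unless $\mu$ is supported on a proper subspace, in which case we work inside that subspace). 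The step I expect to be the main obstacle is the simultaneous satisfaction of \emph{all three} requirements — exact total mass, exact barycenter direction, and strict positivity of every weight — with planes that are \emph{both} rational \emph{and} (when required) positively oriented; the delicate point is making the perturbation from the $R_j$ to the rational $P_j$ without destroying the feasibility of the positive-weight linear system, which is handled by the interior-point observation that $(b_1,\dots,b_k)$ solves the unperturbed system with room to spare, so the implicit function theorem (or a direct convexity argument on the solution polytope) guarantees a nearby positive solution for the perturbed system.
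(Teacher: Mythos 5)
Your first step (discretizing $\mu$ into $\nu=\sum_j\mu(E_j)\delta_{R_j}$ and moving each $R_j$ to nearby rational-slope planes) matches the paper's, but the correction step --- restoring the barycenter with strictly positive weights --- is where the whole content of the lemma lies, and your argument for it has a genuine gap. First, the assertion that a small cluster of planes near $R_j$ has $\omega_{R_j}$ in the relative interior of its \emph{convex hull} is false as stated: the $\omega_P$ are unit vectors, and by strict convexity of the Euclidean ball a unit vector lies in the convex hull of finitely many unit vectors only if it is one of them; so for irrational $R_j$ the hull of rational cluster vectors does not contain $\omega_{R_j}$ at all. What you need is a statement about the convex \emph{cone} generated by nearby simple $d$--vectors, and that is a nontrivial claim about the Pl\"ucker embedding, not a density fact. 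Second, the feasibility count is wrong: requiring $\sum_j a_j\omega_{P_j}\in\R_{>0}v_0$ is not one linear constraint but a condition of codimension $\dim(\mathrm{span})-1$, and the discrepancy $\int\omega_P\ud\mu-\sum_j\mu(E_j)\omega_{R_j}$ generically has components \emph{transverse} to the affine hull of $\{\omega_{R_j}\}$ and $v_0$: it is built from differences $\omega_P-\omega_{R_j}$, whose second-order parts point along the curvature (Pl\"ucker-relation) normal directions of the Grassmannian --- e.g.\ in $\Gr(2,4)$, perturbing $e_1\wedge e_2$ to the graph plane spanned by $e_1+ae_3+be_4$, $e_2+ce_3+de_4$ produces an $e_3\wedge e_4$--component $\approx ad-bc$. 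Moreover, at your proposed feasible point the auxiliary cluster planes carry weight zero, so there is no ``room to spare'' in exactly the coordinates you must use; the admissible moves are mass transfers, i.e.\ nonnegative combinations of $\omega_P-\omega_{R_j}$, and whether these generate the needed transverse directions (with both signs, and only through \emph{positively oriented} planes) is precisely the hard point. Positivity does not ``come for free'' from openness of $\OGr^+(d,n)$: openness helps only after one knows the correction can be carried out by planes close to the support, which is the unproven claim.

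For comparison, the paper sidesteps the local issue entirely. Without the orientation constraint it corrects the barycenter by adding a small measure $\sum_i|b_i|\delta_{\sign(b_i)Q_i}$ on an arbitrary spanning family --- allowing both orientations makes every vector of $\bigwedge^d\R^n$ a positive combination of $\omega$'s --- and then renormalizes, exploiting that only a \emph{multiple} of the barycenter is required. With the orientation constraint, it proves the genuinely nontrivial fact that $\omega_{P_0}$ lies in the interior of the convex hull of the positively oriented graph vectors $\bigwedge M(X)$, by induction on the order of the minors, and uses this to write a correction vector close to $\omega_{P_0}$ as a positive combination of positively oriented simple vectors. A repaired version of your local-cluster scheme would need a localized analogue of that statement, with quantitative control at each minor order; it cannot be dismissed as a standard Carath\'eodory or implicit-function argument.
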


The three parts of Theorem~\ref{thm:construct} follow directly from Lemma~\ref{lem:rational-approx} and  Propositions~\ref{prop:cycles}, \ref{prop:cycles-real-chains}, and \ref{prop:multigraph-fillings}.

In order to provide the proofs of the propositions above, throughout this section, for $0\neq N,M\in \N$, we denote $\langle N\rangle := \{0,\dots, N-1\}$, $\Gamma_N := \langle N\rangle^n$, $\partial \Gamma_N := \langle N\rangle^n\setminus \{1,\dots, N-2\}^n$, $\Gamma_{N,M} = \langle N\rangle^d \times \langle M \rangle^{n-d}$ and $\partial \Gamma_{N,M}:=\Gamma_{N,M}\setminus (\{1,\dots N-2\}^d \times \{1, \dots, M-2\}^{n-d})$. Moreover we define the scaling map $\rho_N\from v\in \R^n\to N^{-1}v\in \R^n$.

\subsection{Constructing closed surfaces}

We construct the surfaces in Propositions \ref{prop:cycles}-\ref{prop:cycles-real-chains}-\ref{prop:multigraph-fillings} out of cycles which we call \emph{tiles}, see Figure~\ref{fig:one-tile}. These are cycles supported in the unit cube or another fundamental domain $F$ of $\R^n$ for the action of $\Z^n$, such that the restriction of the surface to the interior of $F$ consists of intersections of $F$ with planes parallel to the $P_i$. (By a fundamental domain, we mean a \emph{polyhedral fundamental domain}, a polyhedron $F\subset \R^n$ such that the orbit $\Z^n + F$ is all of $\R^n$ and such that, for $x \in \Z^n$, the intersection $F\cap(x+F)$ is contained in $\partial F \cap \partial(x+F)$.)

We construct these tiles as follows. Let $\Z^n$ act on $\R^n$ by translation. We write this action multiplicatively, so that if $y \in \Z^n$ and $z \in \R^n$, then $yz$ is the translation $z+y$. Likewise, for $B\subset \R^n$, we write $yB:=y+B$.

For any plane $P$ with rational slope, choose $M\in M_{n-d,n}(\Z)$ such that $P=\ker M$.  Then the linear map $M\from \R^n\to \R^{n-d}$ descends to a map $\hat{M}\from \R^n/\Z^n\to \R^{n-d}/\Z^{n-d}$, and the set $\Sigma_P=\hat{M}^{-1}(\zero)\subset \R^n/\Z^n$ is a submanifold of $\R^n/\Z^n$ whose tangent planes are parallel to $P$. Indeed, if we let $\tilde{\Sigma}_P$ be the lift of $\Sigma_P$ to $\R^n$, then $\tilde{\Sigma}_P$ is a union of planes parallel to $P$. We orient $\Sigma_P$ and $\tilde{\Sigma}_P$ parallel to $P$, so that $\gamma_{\Sigma_P}$ is a point measure supported at $P$. Let
\begin{equation}\label{eq:T-normalization}
    T_P=\frac{[v_P + \Sigma_P]}{\mass\Sigma_P} \in \Cpoly_d(\R^n/\Z^n;\R),
\end{equation}
so that $\gamma_{T_P}=\delta_P$, where the $v_P$ are small vectors chosen so that there is no cancellation when adding $T_P$ and $T_{-P}$. Let $\tilde{T}_P$ be the lift of $T_P$ to $\R^n$, so that $\tilde{T}_P$ is a sum of parallel planes.

\begin{figure}
  \begin{centering}
    \includegraphics{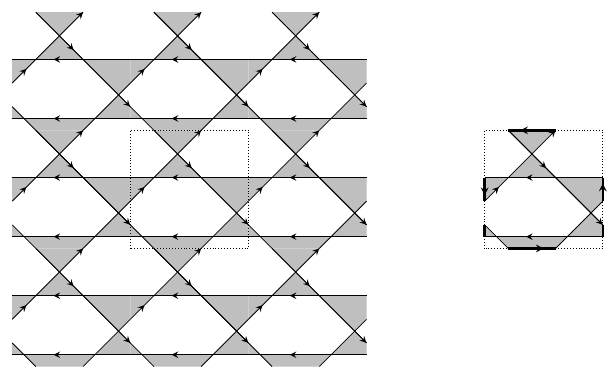}
  \end{centering}
  \caption{\label{fig:one-tile}(left) An example with $d=1$, $n=2$. The polyhedral chain $\tilde{S}$ is a sum of sets of parallel $d$--planes such that the corresponding $d$--vectors (in this case, $m_i \omega_{P_i} = (1,1), (1,-1)$, and $(-2,0)$) sum to zero. This condition implies that $\tilde{S}=\partial \tilde{Q}$ for some $(d+1)$--chain $\tilde{Q}$ (in gray). The dotted square is a  fundamental domain $F$ for the action of $\Z^n$. 
  \endgraf
  \medskip
  (right) A tile $S_F = \partial(\tilde{Q}\rstr F)$. $S_F$ is a $1$--cycle that can be written as the sum of $\tilde{S}\rstr F$. It consists of $d$--planes parallel to the $P_i$'s (thin lines), and $\tilde{Q}\cap \partial [F]$, which is supported on $\partial F$ (thick lines).}
\end{figure}

Let $P_1,\dots, P_k\in \OGr(d,n)$ be oriented $d$-planes with rational slope. Let $m_1,\dots, m_k>0$ be coefficients such that 
  $\sum^{k}_{i=1} m_i \omega_{P_i} = 0$. Let $\mu = \sum_{i=1}^k m_i \delta_{P_i}$. Then $S=\sum_{i=1}^k m_i T_{P_i}\in \Cpoly_d(\R^n/\Z^n;\R)$ is a $d$--cycle with $\gamma_S = \mu$. Since the barycenter of $\gamma_S$ on $\bigwedge^d\R^n$ is $0$, then $S$ is null-homologous in $\R^n/\Z^n$, and there is a polyhedral $(d+1)$--chain $Q\in \Cpoly_{d+1}(\R^n/\Z^n;\R)$ such that $\partial Q = S$. Let $\tilde{Q}$ be the lift of $Q$ to $\R^n$ and let $\tilde{S}$ be the lift of $S$. Then $\tilde{Q}$ is a locally finite polyhedral $(d+1)$--chain and $\tilde{S} = \partial \tilde{Q}$ is a sum of $d$--planes parallel to the $P_i$'s.

For any $n$--dimensional polyhedron $F\subset \R^n$ which is transverse to $\tilde{Q}$, let $S_F =\partial  (\tilde{Q}\cap [F])$. Then, by \eqref{eq:top-dim-leibniz},
\begin{equation}\label{eq:intersection-formula}
  S_F = \partial  (\tilde{Q}\cap [F]) = \partial \tilde{Q}\rstr F + \tilde{Q}\cap \partial F = \tilde{S}\rstr F + \tilde{Q}\cap \partial [F].
\end{equation}

Then $S_F$ consists of a chain $\tilde{S}\rstr F$ made up of planes parallel to the $P_i$'s and a chain $\tilde{Q}\cap \partial [F]$ with support lying in $\partial F$. When $F$ is large, we expect $\tilde{Q}\cap \partial [F]$ to have small mass compared to $\tilde{S}\rstr F$. Hence, since $\gamma_{\tilde{S}\smrstr F}$ is a multiple of $\mu$, we expect $\gamma_{S_F}$ to be close to a multiple of $\mu$. We prove the propositions by taking the union of translates of a fundamental domain $F$, so that we can control both of these terms, see Figure~\ref{fig:tile-grid}(top).

\begin{figure}
  \begin{center}
    \includegraphics{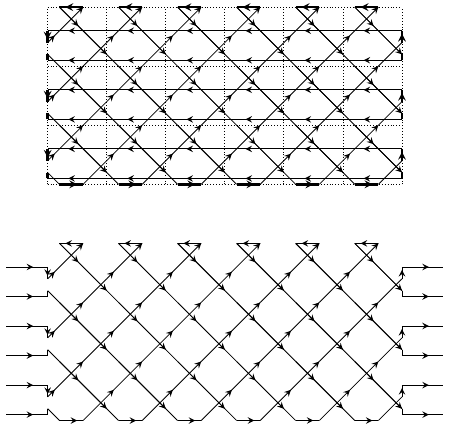}
  \end{center}
  \caption{\label{fig:tile-grid}(top) We construct closed surfaces with prescribed tangents by arranging the tiles in a grid. The boundary components of adjacent tiles cancel out, leaving the interior parts, which are parallel to $P_0=\langle (-2,0)\rangle$, $P_1=\langle (1,1)\rangle$, and $P_2=\langle (1,-1)\rangle$. When the grid is large, the interior has much larger mass than the boundary.
  \endgraf
  \medskip
  (bottom) We produce a surface with boundary on $P_0$ and Gaussian image concentrated on $P_1$ and $P_2$ by adding lines parallel to $-P_0$. This cancels out the faces that are parallel to $P_0$. We then add annuli (not shown) so that the boundary of the surface lies on $P_0$. When the grid has large height and even larger width, all but a small fraction of the surface is parallel to $P_1$ or $P_2$.
  }
\end{figure}

First, we prove Proposition~\ref{prop:cycles}.
\begin{proof}[Proof of Proposition~\ref{prop:cycles}]  
  Let $\tilde{S}$ and $\tilde{Q}$ be as above, so that $\tilde{S}$ is a sum of $d$--planes and $\partial \tilde{Q}=\tilde{S}$. 
  Let $F=[0,1]^n \subset \R^n$ be the unit cube; after potentially translating $\tilde{S}$ and $\tilde{Q}$, we may suppose that $F$ is transverse to $\tilde{Q}$. 
  Let 
  $$S_F=\partial(\tilde{Q}\rstr F) = \tilde{S}\rstr F + \tilde{Q}\cap \partial[F]$$
  as in  \eqref{eq:intersection-formula}. Then $\partial S_F = \partial^2(\tilde{Q}\rstr F)=0$, so $S_F$ is a cycle. 

  For $v\in \Z^n$, let $\tau_v\from \R^n\to \R^n$ be the translation by $v$. We abbreviate the translate $(\tau_v)_\sharp S_F$ by $vS_F = S_{vF}$. For $0\neq N \in \N$, by definition of $\Gamma_N$, the translates $v F, v\in \Gamma_N$ tile $[0,N]^n$ by unit cubes, and we let 
  \begin{equation}\label{eq:def-C}
    C_N := \sum_{v\in \Gamma_N} v S_F.
  \end{equation}
  and $A_N = N^{-(n-d)} (\rho_N)_\sharp(C_N)$. These are cycles with $\supp C_N\subset [0,N]^n$, $\supp A_N \subset F$ and $\mass A_N=\mass S_F$. We claim that $\gamma_{A_N}$ converges to $\mu$ and that the varifolds $\cV(A_N)$ corresponding to $A_N$ converge to $(\cH^n\rstr F)\otimes p_*\mu$.

  For $0\neq N\in \N$, let $E_N := \Gamma_NF = [0,N]^n$. By linearity, 
  \begin{equation}\label{eq:cn-linear}
    C_N = \sum_{v\in \Gamma_N} vS_{F} = \partial \left(\tilde{Q} \cap \sum_{v\in \Gamma_N} [vF]\right) = \partial\left(\tilde{Q} \cap \left[E_N\right]\right) = \tilde{S}\rstr E_N + \tilde{Q}\cap \partial[E_N].
  \end{equation}
  Let $U_N=\tilde{S}\rstr E_N$ and $T_N=\tilde{Q}\cap \partial[E_N]$; we will bound the weighted Gaussian images of $U_N$ and $T_N$. 

  Since $[E_N] = \sum_{v\in \Gamma_N}[vF]$, $U_N$ consists of $N^n$ translates of $\tilde{S}\rstr F$, and 
  $$\gamma_{U_N} = N^n \gamma_{\tilde{S}\smrstr F} = N^n\gamma_S = N^n \mu.$$
  Likewise,
  $$T_N = \sum_{v\in \Gamma_N} \tilde{Q} \cap \partial [v F].$$
  We have $\supp(\tilde{Q}\cap \partial [v F])\subset vF$, however $\supp T_N\subset \partial E_N$, so when $v\in \{1,\dots, N-2\}^n$ then $\tilde{Q}\cap \partial [v F]$ does not contribute to the mass of $T_N$. That is,
  $$\mass T_N = \mass_{T_N}(\partial E_N) \le \sum_{v\in \Gamma_N} \mass_{\tilde{Q} \cap \partial [v F]}(\partial E_N) \le \sum_{v\in \partial \Gamma_N} \mass(\tilde{Q}\cap \partial [v F]).$$
  Therefore, 
  \begin{equation}\label{eq:tn-mass-bound}
    \mass T_N \le \#\partial \Gamma_N \cdot \mass(\tilde{Q}\cap \partial [F]) \le c N^{n-1},
  \end{equation}
  where $c=2n\mass(\tilde{Q}\cap \partial [F])$.

  By scaling, $\gamma_{A_N} = N^{-n} \gamma_{T_N+U_N}$, so by the above bounds and Lemma~\ref{lem:dist-decrease},
  $$\|\gamma_{A_N} - \mu\|_{\TV} = N^{-n} \|\gamma_{T_N+U_N} - \gamma_{U_N}\|_{\TV} \le N^{-n} \mass T_N \le cN^{-1}$$
  and thus $\gamma_{A_N}\to \mu$ in the total variation norm as $N\to \infty$.

  We now prove that $\supp A_N$ converges to $F$ in the Hausdorff metric. Since $\supp A_N\subset F$, it is enough to observe that, for every $x\in F$ there exists $v\in \Gamma_N$ such that $x\in \rho_N(vF)$. Since $\supp A_N\cap \rho_N(vF)\neq \emptyset$ and the diameter of $\rho_N(vF)$ converges to $0$ as $N\to \infty$, we deduce that there exists $x_N\in \supp A_N$ such that $x_N \to x$ as $N\to \infty$.

  We are now left to prove the varifold convergence. By \eqref{eq:tn-mass-bound}, it is straightforward that $\lim_{N\to \infty} \mass( N^{-(n-d)} (\rho_N)_\sharp(T_N))=0$, hence we compute the following limit $W$ in the sense of varifolds:
  $$W:=\lim_{N\to \infty}\cV(A_N) = \lim_{N\to \infty}\cV\left(N^{-(n-d)} (\rho_N)_\sharp(T_N + U_N)\right)=\lim_{N\to \infty}\cV\left(N^{-(n-d)} (\rho_N)_\sharp(U_N)\right).$$
  Let $\tilde{S}_N = ((\rho_N)_\sharp\tilde{S})$. We can write $(\rho_N)_\sharp(U_N) = \tilde{S}_N \rstr F$, so 
  $$W=\lim_{N\to \infty}\cV\left(N^{-(n-d)} \tilde{S}_N\rstr F\right).$$
  
  By construction, for any $x\in \Z^n$, we have $\gamma_{\tilde{S}\smrstr xF} = \mu$ and thus $\gamma_{\tilde{S}_N \smrstr \rho_N(xF)} = N^{-d}\mu$. Therefore, for any compactly supported smooth function $f\in C^\infty(\R^n\times \Gr(d,n))$,
  \begin{align*}
    \cV(\tilde{S}_N\rstr F)[f] & = \sum_{x\in \{0,\frac{1}{N},\dots,\frac{N-1}{N}\}^n} \sum_{i=1}^k N^{-d} m_i \left(f(x,p(P_i)) + O(N^{-1})\right) \\
    & = N^{n-d} \int_{[0,1]^n\times \Gr(d,n)} f(x,p(P)) \ud x \ud \mu(P) + O(N^{n-d-1}),
  \end{align*}
  where the implicit constant in the error term may depend on $f$.
  Thus 
  $$\cV(W)[f] = \lim_{N\to \infty} N^{-(n-d)} \cV(\tilde{S}_N\rstr F)[f] = \int_{[0,1]^n\times \Gr(d,n)} f(x,p(P)) \ud x \ud \mu(P),$$ 
  and $\cV(A_N)$ converges to $W = (\cH^n\rstr F)\otimes p_*\mu$ in the sense of varifolds, as desired.
\end{proof}

\subsection{Constructing chains with planar boundaries}

We prove Proposition~\ref{prop:cycles-real-chains} by modifying the previous construction to produce a chain whose boundary lies on $P_0$, see Figure~\ref{fig:tile-grid}(bottom).
\begin{proof}[Proof of Proposition~\ref{prop:cycles-real-chains}]
  We proceed similarly to the proof of Proposition~\ref{prop:cycles}. Let $\tilde{\mu} = \mu + \delta_{-P_0}$ so that $\tilde{\mu}$ has barycenter equal to $0$ on $\bigwedge^d\R^n$. Let $S = \sum^k_{i=1}m_iT_{P_i} + T_{-P_0}\in \Cpoly_d(\R^n/\Z^n; \R)$. If necessary, we translate the $T_{P_i}$'s so that there is no cancellation in this sum and thus $\gamma_S = \tilde{\mu}$. 
  
  As in the proof of Proposition~\ref{prop:cycles}, there is a $Q\in \Cpoly_{d+1}(\R^n/\Z^n; \R)$ such that $\partial Q = S$, and we let $\tilde{S}$ and $\tilde{Q}$ be the lifts of $S$ and $Q$ respectively, so that $\partial \tilde{Q} = \tilde{S}$. 
  Let $F=[0,1]^n\subset \R^n$ be the unit cube; after potentially translating $\tilde{S}$ and $\tilde{Q}$, we may suppose that $F$ is transverse to $\tilde{Q}$. Let $S_F = \partial (\tilde{Q}\rstr F)$.
  
  Let $0\neq N,M\in \N$ and let $E=E_{N,M}=\Gamma_{N,M} F = [0,N]^d\times [0,M]^{n-d}$. We define a cycle
  \begin{multline*}
    C_{N,M} = \sum_{v\in \Gamma_{N,M}} vS_F = \partial  (\tilde{Q}\rstr{E}) = \tilde{S}\rstr E + \tilde{Q}\cap \partial [E]
    = \tilde{Q}\cap \partial [E] + \sum_i m_i \tilde{T}_{P_i}\rstr E + \tilde{T}_{-P_0}\rstr E.
  \end{multline*}
  
  Let $B_{N,M} = C_{N,M} - (\tilde{T}_{-P_0}\rstr E)$. Since $\tilde{T}_{-P_0}$ is a sum of parallel planes, there is a set $V\subset [0,M]^{n-d}$ (a translate of $\langle M \rangle^{n-d}\subset \R^{n-d}\subset \R^n$) such that 
  $$\tilde{T}_{-P_0}\rstr E = -\sum_{v\in V} v [P_0]\rstr E = -\sum_{v\in V} v [[0,N]^d].$$

  We will see that $\gamma_{B_{N,M}}$ is close to a multiple of $\mu$ when $M$ and $N$ are large. We will construct $A_{N,M}$ by adding some annuli to $B_{N,M}$ and rescaling.

  Since $C_{N,M}$ is a cycle,
  \begin{equation}\label{eq:partial-b0}
    \partial B_{N,M} = \partial C_{N,M} + \sum_{v\in V} v \partial [[0,N]^d] = \sum_{v\in V} v \partial [[0,N]^d].
  \end{equation}
  For each $v\in V$, let $R_v$ be an annulus with $\supp R_v\subset \partial E$ such that $\partial R_{v} = \partial [[0,N]^d] - v\partial [[0,N]^d]$ and $\mass (R_{v}) \lesssim |v| N^{d-1}$. Let
  \begin{equation}\label{eq:def-ANM}
    A_{N,M} = M^{-(n-d)} (\rho_N)_\sharp(B_{N,M} + \sum_{v\in V} R_v).
  \end{equation}
  Then
  $$\partial A_{N,M} = M^{-(n-d)} (\rho_N)_\sharp(\#V \partial [[0,N]^d]) = (\rho_N)_\sharp(\partial [[0,N]^d]) = \partial [[0,1]^d]$$
  is the boundary of the unit cube in $P_0$.

  Now we calculate $\gamma_{A_{N,M}}$. We have
  $$\gamma_{A_{N,M}} = N^{-d}M^{-n+d} \gamma_{B_{N,M}+ \sum_{v\in V} R_v},$$
  so by Lemma~\ref{lem:dist-decrease},
  $$\|\gamma_{A_{N,M}}-\mu\|_{\TV} \le N^{-d}M^{-n+d}\mass\bigg(\sum_{v\in V} R_v\bigg) + \|N^{-d}M^{-n+d}\gamma_{B_{N,M}}-\mu\|_{\TV}.$$
  We bound each of these terms separately.
  
  First, we can write
  \begin{equation}\label{eq:alt-def-BNM}
    B_{N,M} = \tilde{Q}\cap \partial [E_{N,M}] + \sum_i m_i \tilde{T}_{P_i}\rstr E_{N,M}.
  \end{equation}
  None of the terms in this sum cancel, so 
  $$\gamma_{B_{N,M}} = \gamma_{\tilde{Q}\cap \partial [E]} + N^d M^{n-d} \mu.$$
  As in the proof of Proposition~\ref{prop:cycles}, we compute
  \begin{equation}\label{eq:mass-bound0}
  \begin{split}
      \mass(\tilde{Q}\cap \partial [E]) &\le \#\partial \Gamma_{N,M}\cdot  \mass(\tilde{Q}\cap \partial [F])\\
      &\leq (N^d M^{n-d} - (N-2)^d(M-2)^{n-d})\mass(\tilde{Q}\cap \partial [F]),
      \end{split}
  \end{equation}
  so by Lemma~\ref{lem:dist-decrease},
  \begin{equation}\label{eq:boundary-bound}
    \big\|\frac{\gamma_{B_{N,M}}}{N^dM^{n-d}} - \mu\big\|_{\TV} \le \frac{N^d M^{n-d} - (N-2)^d(M-2)^{n-d}}{N^dM^{n-d}}\mass(\tilde{Q}\cap \partial [F]) \lesssim \frac{1}{N} + \frac{1}{M}.
  \end{equation}

  Second, since $|v|\lesssim M$ for all $v\in V$,
  \begin{equation}\label{eq:annulus-bound}
    \frac{\mass(\sum_{v\in V} R_v)}{N^dM^{n-d}} \lesssim \frac{\#V\cdot M N^{d-1}}{N^dM^{n-d}} = \frac{M^{n-d+1} N^{d-1}}{N^dM^{n-d}} = \frac{M}{N}.
  \end{equation}
  
  Combining these bounds, we get
  $$\|\gamma_{A_{N,M}} - \mu\|_{\TV} \lesssim \frac{M}{N} + \frac{1}{M} + \frac{1}{N}.$$
  In particular, choosing $N=M^2$, we have $\|\gamma_{A_{M^2,M}} - \mu\|_{\TV} \to 0$ as $M\to \infty$.

 Proving that $\supp A_{M^2,M}$ converges to $[0,1]^d$ in the Hausdorff metric is done with the very same argument used for the Hausdorff convergence in Proposition~\ref{prop:cycles}.

  Finally, we consider the corresponding varifolds $\cV(A_{M^2,M})$. By \eqref{eq:def-ANM} and \eqref{eq:alt-def-BNM},
  $$A_{M^2,M} = M^{-(n-d)} (\rho_{M^2})_\sharp\left[\tilde{Q}\cap \partial [E_{M^2,M}] + \tilde{S}' \rstr E_{M^2,M} + \sum_{v\in V} R_v\right],$$
  where $\tilde{S}'=\sum_i m_i \tilde{T}_{P_i}$. By \eqref{eq:mass-bound0} and \eqref{eq:annulus-bound},
  \begin{align*}
    \lim_{M\to \infty} \cV(A_{M^2,M}) & = \lim_{M\to \infty} \cV\left(M^{-(n-d)} (\rho_{M^2})_\sharp\left[\tilde{S}' \rstr E_{M^2,M}\right]\right).
  \end{align*}
  Let $T_M = M^{-(n-d)} (\rho_{M^2})_\sharp\left[\tilde{S}' \rstr E_{M^2,M}\right]$. Then 
  $$\gamma_{T_M}=M^{-(n-d)}M^{-2d} M^{2d+(n-d)}\mu = \mu$$
  for all $M$ and $\supp T_M\subset \rho_{M^2}(E_{M^2,M})$.  Arguing as in the proof of the varifold convergence of Proposition~\ref{prop:cycles}, we obtain that, as $M\to \infty$, $\cV(T_M)$ converges as a varifold to $(\cH^d\rstr [0,1]^d)\otimes p_* \mu$, as desired.
\end{proof}

\subsection{Constructing multigraphs}

\begin{figure}
  \begin{center}
    \includegraphics{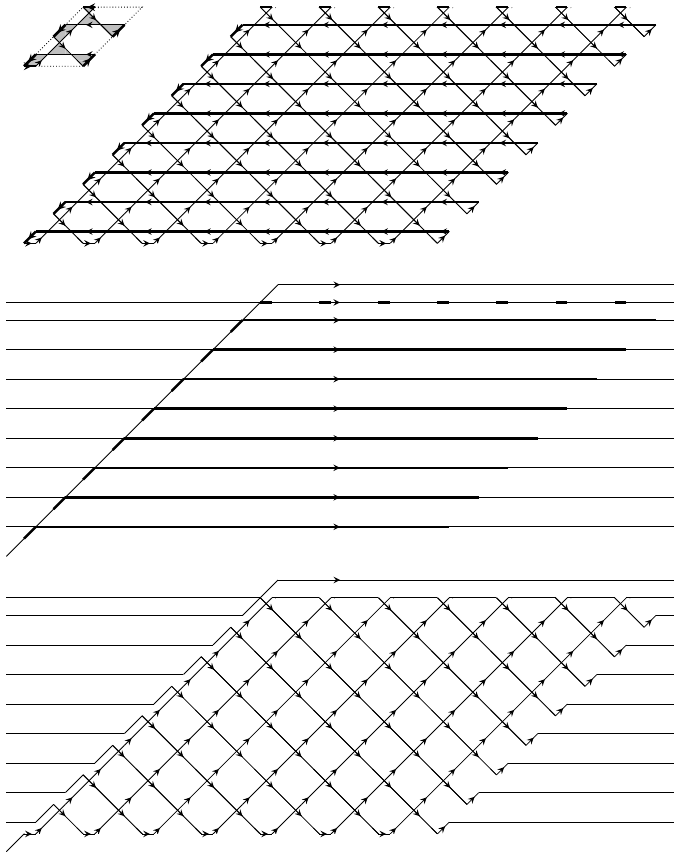}
  \end{center}
   \caption{\label{fig:slanted-grid} We construct a multigraph with prescribed tangents by changing the construction in Figure~\ref{fig:tile-grid} so that $F$ is a parallelogram. This makes every face of $S_F$ either positively or negatively oriented. 
  \endgraf
  \medskip
    To make this a multigraph, we add positively-oriented graphs that cancel out the negatively-oriented faces. These come in two types: graphs parallel to $-P_0$ and graphs containing the negatively-oriented faces on the boundary of the tiling.
  \endgraf
  \medskip
    All of the faces of the resulting surface are positively oriented, so it is a multigraph. We can add annuli (not shown) so that the boundary lies on $P_0$. When the grid has large height and even larger width, most of the lines in the multigraph are parallel to $P_1$ and $P_2$.}
\end{figure}

In this section, we prove Proposition~\ref{prop:multigraph-fillings}. 
The main idea of the proof is to construct a filling as in Proposition~\ref{prop:cycles-real-chains} such that the weighted Gaussian image of the filling is close to $\mu$. Since $\mu$ consists of positively oriented planes, the filling is positively oriented except on a small chain, and we add some additional graphs to the filling to cancel out the negatively oriented part. See Figure~\ref{fig:slanted-grid} for a schematic example, though note that some of the technical details of the construction will differ from the figure.

The negatively oriented parts of these fillings consist of translates of negatively oriented simplices. The following lemma will help us cover these simplices by graphs.
We decompose $\R^n = \R^d\times \R^{n-d}$ and let $\pi\from \R^n\to \R^d$ and $\pi^\perp\from \R^n \to \R^{n-d}$ be the orthogonal projections. 
\begin{lemma}\label{lem:extend-graphs}
  Let $Y\subset \Z^n$ be a subset which can be written as the graph of a Lipschitz function $f\from \pi(Y)\to \R^{n-d}$.
  Let $\delta\subset \R^n$ be a $d$--simplex such that $\pi$ is injective on $\delta$ and $\diam \delta \le \frac{1}{2}$. Then there is a Lipschitz function $g\from \R^d\to \R^{n-d}$ with $\Lip(g)$ depending on $\delta$ and $\Lip(f)$ such that $\bigcup_{y\in Y} y\delta \subset \Lambda_{g}$.
  
  Furthermore, we may choose $g$ so that 
  \begin{equation}\label{eq:extend-bounds}
    g(\R^d) \subset \conv(\pi^\perp(\bigcup_{y\in Y} y\delta)),
  \end{equation}
  where $\conv(X)$ is the convex hull of $X$.
\end{lemma}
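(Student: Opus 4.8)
The plan is to build $g$ one simplex at a time and then patch the pieces together with a Lipschitz partition of unity. First I would fix the decomposition $\R^n=\R^d\times\R^{n-d}$ and note that since $\pi$ is injective on $\delta$, the translate $y\delta$ is the graph over $\pi(y\delta)=\pi(y)+\pi(\delta)$ of an affine map $\ell_y\from \pi(y\delta)\to\R^{n-d}$ whose linear part $L$ is independent of $y$ (it is determined by $\delta$ alone), so $\Lip(\ell_y)=\|L\|$ for every $y\in Y$. The hypothesis $\diam\delta\le\frac12$ guarantees that $\diam\pi(\delta)\le\frac12$, so for distinct $y,y'\in Y$ the sets $\pi(y\delta)$ and $\pi(y'\delta)$ are separated unless $\pi(y)$ and $\pi(y')$ are within distance $\frac12$ in $\Z^d$, i.e. unless $\pi(y)=\pi(y')$; and since $Y$ is a graph over $\pi(Y)$, $\pi$ is injective on $Y$, so in fact the translates $\{y\delta\}_{y\in Y}$ have pairwise disjoint $\pi$-projections. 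Thus the union $\bigcup_{y\in Y}y\delta$ is already the graph of a single well-defined map $g_0$ on $\bigcup_{y\in Y}\pi(y\delta)\subset\R^d$.

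Next I would check that $g_0$ is Lipschitz on its domain with constant controlled by $\Lip(f)$ and $\|L\|$: on each piece $\pi(y\delta)$ it has Lipschitz constant $\|L\|$, and between two different pieces the values $g_0$ takes are comparable because $\pi^\perp(y\delta)$ lies within $C(\delta)$ of $f(\pi(y))$ while $f$ has Lipschitz constant $\Lip(f)$; combining these with the fact that the gap between pieces is at least comparable to the jump in the $\pi$-direction (both bounded below and above by lattice geometry) gives a uniform bound
\begin{equation*}
  \Lip(g_0)\ \le\ C\bigl(\delta\bigr)\bigl(1+\Lip(f)\bigr).
\end{equation*}
Then I extend $g_0$ from its (closed) domain to a Lipschitz map $g_1\from\R^d\to\R^{n-d}$ with the same Lipschitz constant up to a dimensional factor, using the standard (Kirszbraun or coordinatewise McShane) Lipschitz extension theorem. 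To get the containment \eqref{eq:extend-bounds}, I would post-compose with the nearest-point projection $\Pi$ onto the closed convex set $K:=\conv(\pi^\perp(\bigcup_{y\in Y}y\delta))$: nearest-point projection onto a convex set is $1$-Lipschitz and fixes $K$ pointwise, so $g:=\Pi\circ g_1$ is still Lipschitz with the same constant, still agrees with $g_0$ on its domain (since $g_0$ already takes values in $K$), and now satisfies $g(\R^d)\subset K$; hence $\bigcup_{y\in Y}y\delta\subset\Lambda_g$ as required.

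I expect the only real subtlety to be the quantitative Lipschitz bound on $g_0$ "between pieces" — verifying that points in different translates $y\delta$, $y'\delta$ can never be $\pi$-close without their $\pi^\perp$-values also being close. This follows from the two facts that (i) $\pi(Y)$ is a subset of the integer lattice $\Z^d$, so two distinct base points of translates differ by at least $1$ in the $\pi$-direction while $\diam\pi(\delta)\le\frac12$ keeps each translate's projection within a ball of radius $\frac12$, forcing a definite horizontal gap between distinct pieces; and (ii) $Y$ being a Lipschitz graph over $\pi(Y)$ controls the vertical displacement $f(\pi(y))-f(\pi(y'))$ by $\Lip(f)\,|\pi(y)-\pi(y')|$, which is comparable to the horizontal gap. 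Everything else is bookkeeping with affine maps and invoking the Lipschitz extension theorem and the $1$-Lipschitz convex projection, both standard.
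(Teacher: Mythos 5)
Your proposal is correct and follows essentially the same route as the paper's proof: you use the lattice separation of $\pi(Y)\subset\Z^d$ together with $\diam\delta\le\frac12$ to see that the translates project to disjoint sets, define the graph function on that union (with the Lipschitz bound controlled by $\delta$ and $\Lip(f)$), extend by Kirszbraun, and post-compose with the $1$-Lipschitz closest-point projection onto $\conv(\pi^\perp(\bigcup_{y\in Y}y\delta))$. Your explicit ``between pieces'' estimate just fills in a step the paper asserts without detail, so there is nothing to change.
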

\begin{proof}
  Since $Y$ is contained in a graph, $\pi$ is injective on $Y$, and the projections $\pi(y\delta)$ are all disjoint. Indeed, since $\diam \delta \le \frac{1}{2}$, the distance between two such projections is at least $\frac{1}{2}$. Let $U = \bigcup_{y\in Y} y \delta$. Then $\pi$ is injective on $U$, so there is a unique function $\tilde g\from \pi(U) \to \R^{n-d}$ such that $\Lambda_{\tilde g}=U$. This function $\tilde g$ is Lipschitz with constant depending only on $\delta$ and $\Lip(f)$, so by Kirszbraun's theorem, we can extend $\tilde g$ to a function $g \from \R^d\to \R^{n-d}$ with the same Lipschitz constant such that $\bigcup_{y\in Y} y\delta \subset \Lambda_{g}$.  Let $q\from \R^{n-d} \to \conv(\pi^\perp(\bigcup_{y\in Y} y\delta))$ be the closest-point projection; this projection is distance-decreasing, so $q\circ g$ satisfies the desired properties.
\end{proof}

Now we prove the proposition.

  Let $P_0$ be the coordinate $d$--plane $\R^d\subset \R^n$, and let $P_1,\dots, P_k$ be positively oriented planes with rational slope. Let $m_1,\dots, m_k>0$ be coefficients such that $\sum^{k}_{i=1} m_i \omega_{P_i} = \omega_{P_0}$. Let $\mu = \sum^{k}_{i=1} m_i \delta_{P_i}$.

  Since many of the constants in this proof depend on $d$, $n$, and the $P_i$'s, we will write $f \lesssim g$ to denote $f\lesssim_{d,n,P_1,\dots, P_k}g$.
  
  As in Proposition~\ref{prop:cycles-real-chains}, let 
  $$S = -T_{P_0} + \sum_{i=1}^k m_i T_{P_i}\in \Cpoly_d(\R^n/\Z^n;\R)$$
  and let $\tilde{S}$ be the lift of $S$ to $\R^n$, so that $\tilde{S}$ is a sum of planes parallel to the $P_i$'s. We can take $\tilde{T}_{P_0} = \Z^{n-d} P_0$.
  
  We say that two planes $V,W\subset \R^n$ are transverse if $\dim(V\cap W) = \dim(V) + \dim(W) - n$. Note that if $\dim(V) + \dim(W) \ge n$ and $V$ and $W$ are transverse, then so are any two planes parallel to $V$ and $W$. If $V$ and $W$ are transverse and $V'$ and $W'$ are generic planes containing $V$ and $W$ or generic subspaces of $V$ and $W$, then $V'$ and $W'$ are transverse. 

  As before, $S$ is null-homologous in $\R^n/\Z^n$, so there is a $(d+1)$--chain $Q_0$ such that $\partial Q_0 = S$. By Lemma~\ref{lem:polyhedral-to-simplicial}, there is a triangulation $\tau$ of $\R^n/\Z^n$ such that $Q_0$ is a simplicial chain in $\tau$ and thus $\supp S=\supp \partial Q_0$ is a subcomplex of $\tau$. Let $\tau'$ be the barycentric subdivision of $\tau$. Perturbing the vertices of $(\tau')^{(0)}\setminus \supp S$ fixes $\partial Q_0$ and perturbs the tangent planes of $Q_0$, so we can assume that there is a generic perturbation $Q$ of $Q_0$ such that the tangent planes of $Q$ are all transverse to $P_0^\perp$. Let $\tilde{Q}$ be the lift of $Q$ to $\R^n$.

  We proceed similarly to the proof of Proposition~\ref{prop:cycles-real-chains}, with modifications to the choice of the fundamental domain $F$ and to the set $\Gamma\subset \Z^n$.
  Let $F\subset (-1,1)^n$ be a polyhedral fundamental domain for the action of $\Z^n$. After a perturbation, we may suppose that $F$ is transverse to $\tilde{Q}$ and the intersection of any tangent plane of $\partial F$ and any tangent plane of $\tilde{Q}$ is transverse to $P_0^\perp$.
  
  For $r>0$, let $I_r := [-r,r]$. We view $\R^n$ as $\R^d\times \R^{n-d}$, so that $I_r^d\times I_s^{n-d}\subset \R^n$. For $r\in \mathbb{N}$, let $J_r:=\{-r,\dots,r\}\subset \Z$. Let $I_{r,s} := I_r^d\times I_s^{n-d}$ and $J_{r,s} := J_r^d\times J_s^{n-d}\subset \Z^n$.

  Let $K\from \R^n\to \R^n$ be the linear transformation 
  $$K(x_1,\dots, x_n) = (x_1 + x_n,\dots, x_d+x_n, x_{d+1},\dots, x_n).$$ For $M,N>0$, let $\Xi_{N,M} := K(J_{N,M})\subset \Z^n$. Note that since $F$ is a closed subset of $(-1,1)^n$,
  $$K^{-1}(F)\subset (-2,2)^{d}\times (-1,1)^{n-d},$$
  so
  $$\Xi_{N,M} F = K(K^{-1}(\Xi_{N,M} F)) = K(J_{N,M} K^{-1}(F)) \subset \inter(K (I_{N+2,M+1})).$$

  Conversely, if $v\in \Z^n$ and $v K^{-1}(F)$ intersects $I_{N-2,M-1}$, then $v\in J_{N,M}$. Since $K^{-1} (F)$ is a fundamental domain,
  $$I_{N-2,M-1} \subset  J_{N,M}K^{-1}(F)$$
  and thus
  $$K (I_{N-2, M-1}) \subset \Xi_{N,M} F.$$
  Putting these together,
  \begin{equation}\label{eq:E-bounds}
    K (I_{N-3, M-3}) \Subset \Xi_{N,M} F \Subset K (I_{N+3,M+3})
  \end{equation}
  for all $M,N \in \N$, where $S\Subset T$ denotes that $\clos(S)$ is compact and $\clos(S)\subset \inter(T)$.

  Let $M,N \in \N$ such that $M\ge \max\{\epsilon^{-1},50\}$ and $N\ge \max\{\epsilon^{-1}M,50\}$. Let $\Xi := \Xi_{N,M}$ and let $E := \Xi F$.   
  Let $A_0 := \partial(\tilde{Q}\rstr E).$
  Then, by \eqref{eq:intersection-formula} and our choice of $\tilde{S}$
  \begin{equation}\label{eq:a0-break}
    A_0 = \tilde{S}\cap [E] + \tilde{Q}\cap \partial [E]  = \left( \sum_{i=1}^k m_i \tilde{T}_{P_i} \rstr E\right) - \tilde{T}_{P_0} \rstr E + \tilde{Q}\cap \partial [E].
  \end{equation}
  We will see that $\gamma_{A_0}$ is close to a multiple of $\mu$, but $A_0$ is not a multigraph because $- \tilde{T}_{P_0}$ is negatively oriented and $\tilde{Q}\cap \partial [E]$ contains negatively oriented simplices. We will cancel the negatively oriented simplices by covering $\supp(\tilde{T}_{P_0} \cap E)$ and $\supp(\tilde{Q}\cap \partial [E])$ by positively oriented Lipschitz graphs.

  We start with $\supp(\tilde{T}_{P_0}\rstr E) = \supp(\tilde{T}_{P_0})\cap E$. Since $\supp(\tilde{T}_{P_0}) = \Z^{n-d} P_0$,  \eqref{eq:E-bounds} implies that 
  $$\supp(\tilde{T}_{P_0}\rstr E) \subset (\{0\}^d\times J_{M+3}^{n-d}) P_0 \cap I_{N+2M,M+3}.$$
  For each $v\in J^{n-d}_{M+3}$, let $f_v\from I_{N+3M}^{d}\to I_{2M}^{n-d}$ be a $2$--Lipschitz function such that $f_v(x) = v$ for $x\in I_{N+2M}^{d}$ and $f_v(x) = 0$ for $x\in \partial I_{N+3M}^{d}$. Then the graph $\Lambda_{f_v}$ contains $v P_0\cap E$ and $\partial \Lambda_{f_v}=\partial I_{N+3M}^{d}$ lies on $P_0$.

  Next, we cover $\supp \tilde{Q}\cap \partial [E]$ by positively oriented Lipschitz graphs. We first describe $\tilde{Q}\cap \partial [E]$. 
  Let $W := \tilde{Q}\cap \partial [F]$ so that
  $$\tilde{Q}\cap \partial [E] = \sum_{v\in \Xi}\tilde{Q}\cap \partial [vF] = \sum_{v\in \Xi} v W.$$
  By \eqref{eq:E-bounds}, for $R=6$, we have
  $$\Xi_{N-R,M-R}F\Subset \partial E \Subset \Xi_{N+R,M+R}F.$$
  Consequently, if $(vF)\cap \partial E \ne \emptyset$, then $v\in \Xi_{N+R,M+R}\setminus \Xi_{N-R,M-R}$. Hence
  $$\supp(\tilde{Q}\cap \partial[E]) \subset \bigcup_{x\in X_R} \supp(x W),$$
  where $X_R := \Xi_{N+R,M+R}\setminus \Xi_{N-R,M-R}$. Note that $\#X_R\approx N^{d}M^{n-d-1}$.
  
  This set can be covered by Lipschitz graphs, as proved below:
  \begin{lemma}\label{lem:cover-XR}
    There is an $L\lesssim M^{n-d-1}$ and a collection of Lipschitz functions $g_1,\dots, g_L\from \R^d\to [-2M,2M]^{n-d}$ with $\Lip(g_i)\lesssim 1$ such that $\supp g_i\subset I_{N+3M}^d$ for all $i$ and $X_R\subset \bigcup_{i=1}^L\Lambda_{g_i}$. 
  \end{lemma}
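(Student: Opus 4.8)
The plan is to cover $X_R$ one ``boundary slab'' at a time. Since $K$ is a unimodular integer matrix it is a bijection of $\Z^n$, so $X_R=K\big(J_{N+R,M+R}\setminus J_{N-R,M-R}\big)$, and it suffices to cover the $K$-image of the integer shell $J_{N+R,M+R}\setminus J_{N-R,M-R}$. That shell is contained in the union of the $d$ slabs $\{(a,b)\in J_{N+R,M+R}:|a_i|\ge N-R\}$ (on which $a_i$ is pinned to at most $2(2R+1)=26$ values) and the $n-d$ slabs $\{(a,b)\in J_{N+R,M+R}:|b_j|\ge M-R\}$ (on which $b_j$ is pinned). Writing a point of $X_R$ as $(x,z)\in\Z^d\times\Z^{n-d}$, we have $z=b$ and $a_i=x_i-z_{n-d}$, since $K$ adds the last coordinate to each of the first $d$ coordinates and fixes the last $n-d$. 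Hence, after applying $K$, a slab of the first kind becomes a set on which the affine functional $(x,z)\mapsto x_i-z_{n-d}$ takes at most $26$ values, and one of the second kind a set on which $z_j$ takes at most $26$ values. As $n$ is a constant, it is enough to cover each of these $n$ slabs by $\lesssim M^{n-d-1}$ admissible graphs. I will also use the elementary fact that every $(x,z)\in X_R$ satisfies $x\in I_{N+2M}^d$ (because $x_i=a_i+z_{n-d}$ with $|a_i|\le N+R$, $|z_{n-d}|\le M+R$ and $2R\le M$) and $z\in[-2M,2M]^{n-d}$.

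The ``$z_j$-pinned'' slabs are handled by constant maps: each of their points has $\R^{n-d}$-coordinate $z$ lying in a set of size $\le 26\,(2M+13)^{n-d-1}\lesssim M^{n-d-1}$ (one coordinate pinned, the other $n-d-1$ bounded by $M+R$), so the constant maps $g\equiv z$, as $z$ runs over that set, pass through every point of the slab. For a ``$(x_i-z_{n-d})$-pinned'' slab the key point is algebraic: if $t:=x_i-z_{n-d}$ is one of its $\le 26$ pinned values and $\zeta:=(z_1,\dots,z_{n-d-1})$ is fixed (there are $\le (2M+13)^{n-d-1}\lesssim M^{n-d-1}$ choices), then $z_{n-d}=x_i-t$ is forced, so the single slope-one affine map $x\mapsto(\zeta,\,x_i-t)$ passes through every point of the slab with those data. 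Letting $\zeta$ and $t$ vary covers that slab with $\lesssim M^{n-d-1}$ graphs; summing over the $n$ slabs gives $L\lesssim M^{n-d-1}$.

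It remains to convert these globally defined $1$--Lipschitz (or constant) maps into maps $\R^d\to[-2M,2M]^{n-d}$ supported in $I_{N+3M}^d$ without losing the lattice points they must contain. Fix a cutoff $\varphi\from\R^d\to[0,1]$ equal to $1$ on $I_{N+2M}^d$ and to $0$ off $I_{N+3M}^d$, with $\Lip(\varphi)\le 1/M$ (possible since these sets lie at distance $\ge M$). In the ``$(x_i-z_{n-d})$-pinned'' case first clamp the last component of the map to $[-(M+R),M+R]$ --- which changes nothing on $\{x:|x_i-t|\le M+R\}$, and this set contains all the relevant lattice points --- and in every case multiply by $\varphi$. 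Since the maps being cut off take values in $[-(M+R),M+R]^{n-d}$ and are $1$--Lipschitz (or constant), the resulting $g_i$ satisfy $g_i(\R^d)\subset[-2M,2M]^{n-d}$ (as $R\le M$), $\supp g_i\subset I_{N+3M}^d$, and $\Lip(g_i)\le (M+R)\Lip(\varphi)+1\lesssim 1$; and since every point of $X_R$ projects into $I_{N+2M}^d$, where $\varphi\equiv 1$, it still lies on the corresponding graph.

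The only genuinely substantive step is the first one: recognizing that $K$ turns the thin directions of the shell into the functionals $z_j$ and $x_i-z_{n-d}$, that a slab thin in a ``horizontal'' functional $z_j$ is a union of $O(1)$ honest horizontal slices, and that a slab thin in a ``sheared'' functional $x_i-z_{n-d}$ is a union of $O(1)$ slices, each of which is the graph of a slope-one linear map because the remaining coordinate $z_{n-d}=x_i-t$ is then determined by $x$. Everything after that --- the parameter counting, the choice of cutoff, and the Lipschitz bookkeeping --- is routine; the only place needing mild care is checking that the cutoffs do not delete points of $X_R$, i.e., the inclusion $\pi(X_R)\subset I_{N+2M}^d$, for which one uses only $N,M\ge 50$ and $R=6$.
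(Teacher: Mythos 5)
Your proof is correct and follows essentially the same route as the paper's: you split the shell $J_{N+R,M+R}\setminus J_{N-R,M-R}$ into coordinate-pinned slabs, cover the vertically pinned part by constant graphs and the horizontally pinned part (where $K$ turns the pin into the functional $x_i-z_{n-d}$) by slope-one affine graphs indexed by the pinned value and the remaining vertical coordinates, exactly as the paper does with its $k_z$ and $h_{i,x,w}$, and the count $\lesssim M^{n-d-1}$ matches. The explicit clamping and cutoff bookkeeping you add is the same device the paper uses implicitly via the projection $q$ and the requirement $\supp \subset I_{N+3M}^d$.
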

  \begin{proof}
    We write $X_R = K(J_{N+R,M+R}\setminus J_{N-R,M-R})$. For every $v=(v_1,\dots, v_n) \in J_{N+R,M+R}\setminus J_{N-R,M-R}$, either $|v_i|\in (N-R,N+R]$ for some $i\le d$ or $|v_i|\in (M-R,M+R]$ for some $i>d$.
    For $i=1,\dots, d$, let 
    $$Y_i := \{(v_1,\dots, v_n) \in J_{N+R,M+R}\mid |v_i| \in (N-R,N+R]\},$$
    and let 
    $$Z := J_{M+R}^{n-d}\setminus J_{M-R}^{n-d}.$$
    Then 
    $$X_R = K(J^d_{N+R}\times Z) \cup \bigcup_{i=1}^d K(Y_i), \quad \mbox{ and } \quad K(J^d_{N+R}\times Z) \subset \bigcup_{z\in Z} zP_0.$$

    Since $X_R\subset I_{N+M+2R,M+R}\subset I_{N+2M,2M}$, for every $z\in Z$, we can define a Lipschitz function $k_z$ such that $k_z(x)=z$ for $x\in I_{N+2M}^d$ and $\supp k_z\subset I_{N+3M}^d$. There are $\#Z\approx M^{n-d-1}$ such functions and their graphs cover $K(J^d_{N+R}\times Z)$. We can take these functions to have Lipschitz constant at most $2\sqrt{n}$.

    Likewise, we can cover $K(Y_i)$. Let $q\from \R^{n-d}\to [-2M,2M]^{n-d}$ be the closest-point projection, and for $i=1,\dots, d$, $|x|\in \{N-R+1,\dots, N+R\}$, and $w\in J^{n-d-1}_{M+R}$, let $h_{i,x,w}$ be a Lipschitz function such that 
    $$h_{i,x,w}(v_1,\dots,v_d) = q(w,v_i-x), \quad \forall (v_1,\dots,v_d)\in I_{N+2M}^d,$$
     and $\supp h_{i,x,w}\subset I_{N+3M}^d$. We can take these functions to have Lipschitz constant at most $2\sqrt{n}$.

    Suppose that $v=(v_1,\dots,v_n)\in K (Y_i)$. Then 
    $$K^{-1}(v) = (v_1-v_n,\dots,v_d-v_n,v_{d+1},\dots,v_n)\mbox{ and }|v_i-v_n|\in (N-R,N+R].$$ 
    Therefore, if $x=v_i-v_n$ and $w=(v_{d+1},\dots,v_{n-1})$, then $h_{i,x,w}(v_1,\dots, v_d) = v$. That is,
    $$K(Y_i)\subset \bigcup_{|x|\in \{N-R+1,\dots, N+R\}} \bigcup_{w\in J^{n-d-1}_{M+R}} \Lambda_{h_{i,x,w}}.$$
    Therefore, the graphs of the $h_{i,x,w}$'s cover $\bigcup_{i=1}^d K(Y_i)$.
    There are $d\cdot 2R\cdot (2M+2R+1)^{n-d-1}\lesssim M^{n-d-1}$ such functions. Thus, if we define $g_1,\dots, g_L$ to be a relabeling of the $k_z$'s and the $h_{i,x,w}$'s, we have $L\lesssim M^{n-d-1}$ and $X_R\subset \bigcup_{i=1}^L \Lambda_{g_i}$, as desired.
  \end{proof}

  To cover $\supp(\tilde{Q}\cap \partial[E])$ by Lipschitz graphs, we aim below to decompose $\supp W$ into simplices and apply Lemma~\ref{lem:extend-graphs}. Let $\sigma$ be a $\Z^n$--invariant triangulation of $\R^n$ such that $F$ is a subcomplex of $\sigma$, each simplex of $\sigma$ has diameter at most $\frac{1}{2}$, and $W$ is a simplicial chain in $\sigma$. We enumerate the $d$--simplices of $\supp W$ as $\delta_1,\dots,\delta_\ell$. The tangent planes to the $\delta_j$'s are intersections of tangent planes of $\partial F$ and tangent planes of $\tilde{Q}$, so the $\delta_j$'s are transverse to $P_0^\perp$; we orient each $\delta_j$ positively. Then we can write
  \begin{equation}\label{eq:tildeQ-decomp}
    \tilde{Q}\cap \partial [E] = \sum_{v \in X_R} \sum_{j=1}^\ell a_{v,j} [v\delta_j]
  \end{equation}
  for some coefficients $a_{v,j}\in \R$, where each coefficient $a_{v,j}$ is a sum of coefficients of $W$. In particular,
  $$\supp(\tilde{Q}\cap \partial [E])\subset \bigcup_{v\in X_R}\bigcup_{j=1}^\ell v\delta_j$$
  and
  \begin{equation}\label{eq:bdry-size}
    \mass(\tilde{Q}\cap \partial [E]) \lesssim \#X_R \sum_{j=1}^\ell\mathcal H^d(\delta_j) \lesssim N^dM^{n-d-1}.
  \end{equation}
  By applying Lemma~\ref{lem:extend-graphs} to the Lipschitz functions constructed in Lemma~\ref{lem:cover-XR}, we obtain Lipschitz functions $g_{i,j}\from [-N-3M,N+3M]^d\to [-2M,2M]^{n-d}$ such that 
  $$\bigcup_{v\in X_R}\bigcup_{j=1}^\ell v\delta_j \subset \bigcup_{i=1}^L\bigcup_{j=1}^\ell\Lambda_{g_{i,j}}$$
  and $\Lip(g_{i,j}) \lesssim 1$. 
  
  Let $c$ be the largest coefficient of $\tilde{Q}\cap \partial [E]$, and let 
  $$\Upsilon = \sum_{v\in J^{n-d}_{M+3}} [\Lambda_{f_v}] + c \sum_{i,j} [\Lambda_{g_{i,j}}].$$
  Then $\supp \Upsilon$ contains the support of $\tilde{T}_{P_0} \cap E$ and the support of $\tilde{Q}\cap \partial [E]$. Let $B_0:=A_0+\Upsilon$. The simplices of $\Upsilon$ cancel out any negatively oriented simplices of $\tilde{T}_{P_0} \cap E$ and $\tilde{Q}\cap \partial [E]$, so $A_0+\Upsilon$ is positively oriented. Furthermore, 
  $$\partial B_0 = \partial(A_0 + \Upsilon) = \partial \Upsilon = (\#J^{n-d}_{M+3} + c L \ell) [I_{N+3M}^d]$$
  is a multiple of $[I_{N+3M}^d]$. Let 
  \begin{equation}\label{eq:def-K}
    \alpha=(\#J^{n-d}_{M+3} + c L \ell)= (2M)^{n-d} + O(\epsilon M^{n-d}).
  \end{equation}

  It remains to compute the weighted Gaussian image of $B_0$. 
  By \eqref{eq:a0-break}, $B_0 = B_1 + B_2 + B_3$ where
  $$B_1 = \sum_{i=1}^k m_i \tilde{T}_{P_i}\rstr E,$$
  $$B_2 = -\tilde{T}_{P_0}\rstr E + \sum_{v\in J^{n-d}_{M+3}} [\Lambda_{f_v}],$$
  $$B_3 = \tilde{Q}\cap \partial[E] + \sum_{i,j} [\Lambda_{g_{i,j}}].$$
  Let 
  \begin{equation}\label{eq:def-V}
    V = \#\Xi=(2N+1)^d(2M+1)^{n-d} = 2^n N^d M^{n-d} + O(\epsilon N^dM^{n-d}).
  \end{equation}
  Then $\gamma_{B_1} = V\mu$. We claim that $\mass(B_2) + \mass(B_3)\lesssim \epsilon V$.

  To bound $\mass(B_2)$, note that the functions $f_v\from I_{N+3M}^d\to \R^{n-d}$ are constructed so that $\sum_v [\Lambda_{f_v}]\rstr E = \tilde{T}_{P_0}\rstr E$. Therefore, $B_2=\sum_v [\Lambda_{f_v}]\rstr(\R^n\setminus E)$, and
  \begin{equation}\label{eq:massB2}
    \mass(B_2) = \sum_{v\in J^{n-d}_{M+3}} \mass [\Lambda_{f_v}\setminus E].
  \end{equation}
  We have $\alpha I_{N-3, M-3} \Subset E$, so when $v\in J_{M-3}^{n-d}$, the intersection $\Lambda_{f_v}\cap E$ is a translate of $I_{N-3}^d$. Therefore, $\Lambda_{f_v}\setminus E$ is the graph of a Lipschitz function over a domain of volume $(2N+6M)^d - (2N-6)^d\approx N^{d-1} M$, and $\vol^d(\Lambda_{f_v}\setminus E)\approx N^{d-1}M$. Therefore,
  $$\mass(B_2)\lesssim \#J_{M-3}^{n-d} \cdot N^{d-1} M + \#(J_{M+3}^{n-d}\setminus J_{M-3}^{n-d})N^d \approx N^{d-1}M^{n-d+1} + N^d M^{n-d-1},$$
  i.e., $\mass(B_2)\lesssim V (\frac{M}{N}+\frac{1}{M})\lesssim \epsilon V.$
  
  Finally, we bound $B_3$. The functions $g_{i,j}$ are uniformly Lipschitz and defined on $I^d_{N+3M}$, so 
  $$\mass \sum_{i=1}^L\sum_{j=1}^\ell [\Lambda_{g_{i,j}}]\lesssim N^d L \ell \lesssim N^d M^{n-d-1}.$$
  Thus, by \eqref{eq:bdry-size}, 
  $$\mass(B_3) \lesssim N^dM^{n-d-1} + N^d M^{n-d-1}\lesssim \frac{V}{M}\lesssim \epsilon V.$$

  Therefore, by Lemma~\ref{lem:dist-decrease},
  $$\|\gamma_{B_0} - V\mu\|_{\TV} = \|\gamma_{B_0} - \gamma_{B_1}\|_{\TV} \le \mass(B_2) + \mass(B_3) \lesssim V \Big(\frac{M}{N}+\frac{1}{M}\Big).$$
  When $N$ is sufficiently larger than $M$ and both are large enough, $\|\gamma_{B_0} - V\mu\|_{\TV}\lesssim V \epsilon$.
  Let $B:=(\rho_{2N+6M})_\sharp(\alpha^{-1} B_0)$. Then $\partial B$ is a unit cube in $P_0$ and
  $$\Big\|\gamma_{B} - \frac{V}{\alpha (2N+6M)^{d}}\mu\Big\|_{\TV} \lesssim \frac{V\epsilon}{\alpha (2N+6M)^{d}}.$$
  By \eqref{eq:def-K} and \eqref{eq:def-V}, 
  \begin{equation}
      \begin{split}
          \alpha (2N+6M)^{d} &= 2^n(M^{n-d}+O(\epsilon M^{n-d}))(N^d+O(\epsilon N^d)) \\
          &= 2^n M^{n-d} N^d + O(\epsilon M^{n-d}N^d)= V + O(\epsilon V),
      \end{split}
  \end{equation}
  so $\|\gamma_{B} - \mu\|_{\TV} \lesssim \epsilon,$
  as desired.
\subsection{Proof of Lemma~\ref{lem:rational-approx}} 
We first prove the lemma without the orientation conditions.
Without loss of generality, suppose $\mu$ is a probability measure. 
  The set of planes with rational slope is dense in $\OGr(d,n)$, so for any $\epsilon'>0$, there is another probability measure $\mu_0$ which is a sum of $\delta_P$'s such that $d_{\mathsf{W}}(\mu_0 ,\mu)< \epsilon'$. Then
  $$\left|\int \omega_P \ud \mu(P) - \int \omega_P \ud \mu_0(P)\right| \lesssim \epsilon'.$$

  Let $\{Q_i\}_{i=1}^m\subset \OGr(d,n)$ be such that the $\omega_{Q_i}$'s span $\wedge^d\R^n$. Hence there exists $\{b_i\}_{i=1}^m\subset \R$ such that
  $$ \sum_{i=1}^m b_i \omega_{Q_i} = \int \omega_P \ud \mu(P) - \int \omega_P \ud \mu_0(P).$$
  In particular we construct the positive Radon measure 
  $$\nu = \sum_{i=1}^m |b_i| \delta_{\text{sign}(b_i) Q_i}\in \mathcal{M}(\OGr(d,n))$$ such that $\nu(\OGr(d,n))\lesssim \epsilon'$ and
  $$\int \omega_P \ud \nu(P) = \int \omega_P \ud \mu(P) - \int \omega_P \ud \mu_0(P).$$
  When $\epsilon'$ is sufficiently small, $\mu' := \frac{\mu_0 + \nu}{(\mu_0 + \nu)(\OGr(d,n))}$ satisfies the desired conditions.
  
  Now suppose that $\int \omega_P \ud \mu(P) = v \omega_{P_0}$ and the support of $\mu$ is positively oriented with respect to $P_0$. In this case, the method above does not work because $\text{sign}(b_i) Q_i$ need not be positively oriented. Instead, we first need to show that $\omega_{P_0}$ is in the interior of the convex hull of the positively oriented $d$-vectors with respect to $\omega_{P_0}$.

  To this aim, let $e_1,\dots, e_n$ be the coordinate basis of $\R^n$; without loss of generality, we suppose that $P_0 = \langle e_1,\dots, e_d\rangle$. Recalling Definition \ref{def:matrix-polycon}, any positively oriented plane has a basis of the form $w_1(X),\dots, w_d(X)$ for some matrix $X\in M_{n-d,d}(\R)$. Denoting $W(X) = \bigwedge M(X)$, as defined in Definition \ref{def:matrix-polycon},
  let $C$ be the convex hull of $W(M_{n-d,d}(\R))$. We claim that $\omega_{P_0}$ lies in the interior of $C$.
  
  The coordinates of $W(X)$ with respect to the standard basis of $\wedge^d\R^n$ are given by the minors of $X$. For each multi-index $I=(i_1,\dots, i_d)$ with $1\le i_1<\dots<i_d\le n$, let $v_I=e_{i_1}\wedge\dots \wedge e_{i_d}$ and let $\delta(I)$ be the number of $i_j$'s such that $i_j>d$. Then the $v_{(1,\dots, d)}$--coordinate of $W(X)$ is $1$ and the $v_I$--coordinate of $W(X)$ is given by a $\delta(I)\times \delta(I)$ minor of $X$. If the coefficients of $X$ are zero except in that minor, then the only nonzero coordinates of $W(X)$ are the $v_{(1,\dots, d)}$--coordinate, the $v_I$--coordinate and coordinates corresponding to smaller minors of that minor.

  It follows that $v_{(1,\dots, d)} \pm v_I\in C$ whenever $\delta(I)=1$. One can show by induction on $\delta(I)$ that there is an $r>0$ such that $v_{(1,\dots, d)} \pm r v_I \in C$ for all $I\ne (1,\dots,d)$. Thus $v_{(1,\dots, d)}=\omega_{P_0}$ lies in the interior of $C$.
  
  Now we conclude the proof of the lemma. Let $\eta>0$, let $\mu_0$ be a sum of point measures supported on positively-oriented planes such that $\mu_0(\OGr(d,n)) = \mu(\OGr(d,n))$ and $d_{\mathsf{W}}(\mu_0 ,\mu)< \eta^2$. Let $\omega = \int \omega_P \ud \mu_0(P)$. Then
  $|v\omega_{P_0} - \omega| \lesssim \eta^2$. Let
  $$\omega' = (1+\eta) v \omega_{P_0} - \omega.$$
  Then 
  $$|\omega' - \eta v \omega_{P_0}| = |v\omega_{P_0} - \omega|\lesssim \eta^2$$
  and thus $|v^{-1} \eta^{-1} \omega' - \omega_{P_0}|\lesssim v^{-1}\eta$.
  Since $\omega_{P_0}$ is in the interior of $C$, we can choose $\eta$ small enough that $v^{-1} \eta^{-1} \omega'\in C$ as well. Then there are positively-oriented $\{Q_i\}_{i=1}^m\subset \OGr(d,n)$ and coefficients $\{b_i\}_{i=1}^m\subset [0,1]$ such that $v^{-1} \eta^{-1} \omega' = \sum_i b_i\omega_{Q_i}$. We let $\nu = \sum_i b_i \eta v \delta_{Q_i}$ and we observe that
  $$\int \omega_P \ud(\mu_0+\nu)(P) = \omega + \omega' = (1+\eta) v \omega_{P_0},$$
  so, when $\eta$ is sufficiently small,
  $$\mu' := \frac{\mu_0 + \nu}{(\mu_0 + \nu)(\OGr(d,n))}$$
  satisfies the desired conditions.

\subsection{Proof of 
Theorem \ref{thm:poly-multi}}\label{sec:poly-multi} 

Given $\psi\in C^0(\R^{(n-d)\times d},[0,\infty))$, we prove that the energy $F_\psi$ is elliptic on Lipschitz multivalued functions if and only if $\psi$ is polyconvex.

Let $\bigwedge M \from \R^{(n-d)\times d}\to \bigwedge^d\R^n$ be as in Definition~\ref{def:matrix-polycon}. Let $B^d$ be the unit ball in $\R^d$. Let $\alpha\from B^d\to \R^{n-d}$ be a linear function whose graph is the plane $P_\alpha$ and let $h\from B^d\to \mathcal{A}_Q(\mathbb{R}^{n-d})$ be a $Q$--valued Lipschitz function such that $h(x)=Q\delta_{\alpha(x)}$ for every $x\in \partial B^d$. Then the multigraph $\Lambda_h$ provides the integral current $[\Lambda_h]$, see \cite[Definition 1.10]{DLSp2}, such that $\partial [\Lambda_h]=Q\partial [\alpha(B^d)]$. So by Stokes' Theorem, 
$$\int \omega_{P} \ud \gamma_{[\Lambda_h]}(P) = Q \cH^d(\alpha(B^d)) \omega_{P_\alpha}.$$

If $\psi\from \R^{(n-d)\times d}\to [0,\infty)$ is polyconvex, then there is a 1--homogeneous convex function $\Psi\from (\bigwedge^d\R^n)^+ \to [0,\infty)$ such that $\psi=\Psi\circ \bigwedge M$. Then, by Jensen's inequality,
$$F_\psi(h) = F_\Psi([\Lambda_h]) = \int \Psi(\omega_P) \ud \gamma_{[\Lambda_h]}(P)\ge Q \cH^d(\alpha(B^d)) \Psi(\omega_{P_\alpha}) = Q F_\psi(\alpha),$$
i.e., $\psi$ is elliptic for multivalued Lipschitz functions.

Conversely, suppose that $\psi\from \R^{(n-d)\times d}\to [0,\infty)$
is continuous but not polyconvex. Then there exist $X_0, X_1,\dots X_k\in \R^{(n-d)\times d}$ and coefficients $m_1,\dots, m_k$ such that 
  $$\sum^{k}_{i=1} m_i \bigwedge M(X_i) = \bigwedge M(X_0)$$
  and 
  $$\sum^{k}_{i=1} m_i \psi(X_i) < \psi(X_0).$$
  We assume without loss of generality that $X_0=0$.
  Let $P_i$ be the span of the column vectors of $M(X_i)$ and let  
  $$\mu:=\sum^{k}_{i=1} m_i \big \|\bigwedge M(X_i)\big \| \delta_{P_i}\in \cM(\OGr(d,n)).$$

For every $\epsilon>0$, the third part of Theorem~\ref{thm:construct} then provides a $Q$--valued function $u_\epsilon\from B^d\to \mathcal{A}_Q(\mathbb{R}^{n-d})$ such that
 \begin{equation}\label{closeness}
    \gamma_{[\Lambda_{u_\epsilon}]} \rightharpoonup^* \mu \; \mbox{ as } \; \epsilon \to 0\qquad \text{and} \qquad u|_{ \partial B^d}=0.  
  \end{equation}
  The continuity of $\psi$ implies that $F(u_\epsilon)< F(0)$, provided $\epsilon$ is chosen small enough. 
    Thus $\psi$ is not elliptic for Lipschitz multigraphs.

\section{The atomic condition implies strict polyconvexity}\label{sec:atomic}
Finally, we use Theorem \ref{thm:construct} to study the atomic condition and the equivalent condition (BC) defined in Definition \ref{def:BC}. Throughout this section we focus on even integrands, that is, we assume $\Psi\in C^1(\Gr(d,n),(0,\infty))$. We will prove Theorem~\ref{thm:main-atomic2}, which states that if $\Psi\in C^1(\Gr(d,n),(0,\infty))$ satisfies the atomic condition, then the even extension $\tilde \Psi$ of $\Psi$ to $\OGr(d,n)$ is strictly polyconvex. With a slight abuse of notation, we will denote $\Psi\in C^1(\Gr(d,n),(0,\infty))$ and its even extension by the same letter $\Psi$. The domain of the integrand will be clear from the context. 

The main tool in the proof is the following proposition.
\begin{prop}\label{stationarylemma}
  Let $S\in \bI^{d-1}(\R^n)$ and $\{A^j\}_{j\in \N}\subset \bR^d(\R^n)$ be a minimizing sequence for $\FE_F(S;\R)$, i.e. 
  $$\partial A^j=S \quad \forall j\in \N \qquad \mbox{ and } \qquad \lim_{j\to \infty}F(A^j)= \FE_F(S;\R).$$
  Let $V^j:=\cV(A^j)$ be the varifold canonically associated with $A^j$. Suppose that the $V^j$'s converge to a varifold $W$. Then $W$ is stationary away from $\supp(S)$, i.e., $\delta_\Psi[W](g)=0$ for every smooth vector field $g$ compactly supported in $\R^{n}\setminus \supp(S)$.
\end{prop}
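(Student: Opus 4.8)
The plan is to argue by contradiction: if $W$ were not stationary away from $\supp(S)$, there would be a compactly supported smooth vector field $g$ on $\R^n\setminus\supp(S)$ with $\delta_\Psi[W](g)<0$ (replacing $g$ by $-g$ if needed). The strategy is then to push the minimizing currents $A^j$ by the time-$\varepsilon$ flow $\phi_\varepsilon$ of $g$ and show that for suitable small $\varepsilon>0$ and large $j$, the energy $F(\phi_{\varepsilon\sharp}A^j)$ drops strictly below $\FE_F(S;\R)$, contradicting minimality. The point is that $\phi_\varepsilon$ is the identity near $\supp(S)$, so $\partial(\phi_{\varepsilon\sharp}A^j)=\phi_{\varepsilon\sharp}S=S$, hence $\phi_{\varepsilon\sharp}A^j$ is a legitimate competitor.

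First I would record the relation between the anisotropic energy of a rectifiable current and the energy of its associated varifold: since $\Psi$ is even, $F_\Psi(A) = F_\Psi(\cV(A),\R^n)$ for every $A\in\bR^d(\R^n)$ with $\Psi$ on the right interpreted as in \eqref{envar}, because the mass measure and tangent-plane distribution of $A$ and $\cV(A)$ agree (this is exactly how $\cV(A)$ was defined). The same identity holds after pushing forward by any diffeomorphism $\phi$, using that $\phi_\sharp\cV(A)=\cV(\phi_\sharp A)$ for the varifold pushforward and that the $d$-Jacobian factor in the varifold pushforward matches the mass computation for currents. Consequently $F(\phi_{\varepsilon\sharp}A^j)=F_\Psi\bigl(\phi_{\varepsilon\sharp}V^j\bigr)$.

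Next I would exploit the first-variation structure. Since $\Psi\in C^1$ and $g$ has compact support in $\R^n\setminus\supp(S)$, the map $\varepsilon\mapsto F_\Psi\bigl(\phi_{\varepsilon\sharp}V\bigr)$ is $C^1$ for every varifold $V$ of locally bounded mass, with derivative at $0$ equal to $\delta_\Psi[V](g)$; moreover one has a uniform first-order Taylor estimate
$$\bigl|F_\Psi(\phi_{\varepsilon\sharp}V) - F_\Psi(V) - \varepsilon\,\delta_\Psi[V](g)\bigr| \le C\,\varepsilon^2\, \|V\|(\supp g),$$
where $C$ depends only on $g$ and the $C^1$ norm of $\Psi$ on the relevant compact set (this is the standard second-order control on a smooth flow, e.g.\ via \cite[Chapter 8]{SIM}). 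Apply this with $V=V^j$: since $\|V^j\|(\R^n)=\mass(A^j)$ is bounded along a minimizing sequence, the masses $\|V^j\|(\supp g)$ are uniformly bounded, so
$$F(\phi_{\varepsilon\sharp}A^j)\le F(A^j) + \varepsilon\,\delta_\Psi[V^j](g) + C\varepsilon^2.$$
Now let $j\to\infty$: $V^j\to W$ as varifolds, $\delta_\Psi[V^j](g)\to\delta_\Psi[W](g)=:-\kappa<0$ (the linear functional $V\mapsto\delta_\Psi[V](g)$ is weak-$*$ continuous because $g$ is fixed and smooth), and $F(A^j)\to \FE_F(S;\R)$. Fix $\varepsilon>0$ small enough that $\varepsilon\kappa > 2C\varepsilon^2$, i.e.\ $\varepsilon<\kappa/(2C)$, and then $j$ large enough that $\delta_\Psi[V^j](g)\le -\kappa/2$ and $F(A^j)\le \FE_F(S;\R)+\varepsilon\kappa/8$. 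Then
$$F(\phi_{\varepsilon\sharp}A^j)\le \FE_F(S;\R) + \tfrac{\varepsilon\kappa}{8} - \tfrac{\varepsilon\kappa}{2} + C\varepsilon^2 < \FE_F(S;\R),$$
contradicting the definition of $\FE_F(S;\R)$ since $\phi_{\varepsilon\sharp}A^j$ is a competitor with boundary $S$. Hence $\delta_\Psi[W](g)=0$ for all such $g$, which is the assertion.

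The main obstacle I anticipate is making the Taylor estimate genuinely uniform in $j$: one must check that the constant $C$ in the second-order remainder does not depend on the varifold $V^j$ but only on $g$, on $\Psi|_{\supp g\times\Gr(d,n)}$, and on the total mass $\sup_j\|V^j\|(\supp g)$. This is where one uses that $\phi_\varepsilon$ is generated by a fixed smooth compactly supported $g$, so the Jacobian factor $J\phi_\varepsilon(x,T)$ and the Grassmannian motion $d_x\phi_\varepsilon(T)$ depend smoothly on $(\varepsilon,x,T)$ with all derivatives bounded on the compact set $\supp g\times\Gr(d,n)$; combined with $\Psi\in C^1$, the integrand $(\varepsilon,x,T)\mapsto \Psi(d_x\phi_\varepsilon(T))J\phi_\varepsilon(x,T)$ is $C^1$ in $\varepsilon$ with a modulus-of-continuity control that is independent of the measure $V^j$ against which it is integrated. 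Everything else is a routine compactness-plus-minimality argument.
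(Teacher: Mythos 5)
Your proposal is correct in substance and follows the same core strategy as the paper's proof: argue by contradiction, flow the minimizing sequence by the time-$t$ flow $\varphi_t$ of a compactly supported field $g$ with $\delta_\Psi[W](g)<0$, use that $\varphi_t$ is the identity near $\supp(S)$ so that $(\varphi_t)_\sharp A^j$ remains a competitor with boundary $S$ and $F((\varphi_t)_\sharp A^j)=F((\varphi_t)_\sharp V^j)$, and conclude that for small $t$ and large $j$ the energy drops below $\FE_F(S;\R)$. Where you differ is in how the energy decrease is transferred from $W$ to the sequence: the paper never expands $F((\varphi_t)_\sharp V^j)$ in $t$ uniformly in $j$; it fixes a ball $B\supset\supp g$ with $\|W\|(\partial B)=0$, notes $\|W_t\|(\partial B)=0$, and uses the varifold convergences $V^j\to W$ and $(\varphi_t)_\sharp V^j\to W_t$ to pass the localized energies $F(\cdot,B)$ to the limit at each fixed $t$, after integrating the first-variation inequality to get $F(W_t)\le F(W)-Ct$. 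Your route instead combines weak-$*$ continuity of $V\mapsto\delta_\Psi[V](g)$ (valid: the first variation is integration against $V$ of a fixed, compactly supported, continuous function of $(x,T)$) with a remainder estimate uniform in the measure. Both work; the paper's localization avoids any quantitative expansion, while yours makes the mechanism more explicit.

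One quantitative point needs fixing: the displayed bound $\bigl|F_\Psi(\phi_{\varepsilon\sharp}V)-F_\Psi(V)-\varepsilon\,\delta_\Psi[V](g)\bigr|\le C\varepsilon^2\|V\|(\supp g)$ with $C$ depending only on $g$ and the $C^1$ norm of $\Psi$ is not available for merely $C^1$ integrands: the $\varepsilon$-derivative of the integrand $\Psi(d_x\phi_\varepsilon(T))\,J\phi_\varepsilon(x,T)$ involves $D\Psi$, which is only continuous, so $\varepsilon\mapsto F_\Psi(\phi_{\varepsilon\sharp}V)$ is $C^1$ but its derivative need not be Lipschitz in $\varepsilon$. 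What you do get, uniformly in $V$, is a remainder of the form $\varepsilon\,\omega(\varepsilon)\,\|V\|(K)$, where $\omega$ is the modulus of continuity of that derivative on the compact set swept by the flow --- exactly the control you describe in your final paragraph --- and this suffices: choose $\varepsilon$ so that $\omega(\varepsilon)\sup_j\|V^j\|(K)$ is small compared with $\kappa$, then run the same chain of inequalities. So the issue is only the stated power of $\varepsilon$ (which would require $\Psi\in C^{1,1}$), not the architecture of the argument.
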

\begin{proof}
  Suppose by way of contradiction that $g$ is a smooth vector field compactly supported in $\R^{n}\setminus \supp(S)$ such that
  $$\delta_\Psi[W](g)\leq -2C <0.$$
  If $\epsilon>0$ is small enough, then we can define $\varphi_t\from \R^n\to \R^n, t\in (-\epsilon,\epsilon)$ to be the flow of $g$, so that $\varphi_0=\id$, $\frac{\partial \varphi_t(x)}{\partial t}=g(\varphi_t(x))$ for all $|t|<\epsilon$ and all $x\in \R^n$, and $\varphi_t$ is a diffeomorphism for all $|t|<\epsilon$.

  Let $W_t:=(\varphi_t)_\sharp W$ and let $V^j_t=(\varphi_t)_\sharp V^j$, so that $$\left.\frac{d}{d t}\right|_{t = 0}F(W_t) = \delta_\Psi[W](g)\leq -2C.$$
  Since the functional $Z\mapsto\delta_\Psi[Z](g)$ is continuous with respect to $Z$, we may shrink $\epsilon$ so that 
  \begin{equation}
    \label{integro1}F(W_t)\leq F(W) - Ct \text{\qquad for }t\in [0,\epsilon).
  \end{equation}

  Let $B$ be a ball such that $g$ is compactly supported in $B$ and $\|W\|(\partial B)=0$. Hence $\lim_{j\to \infty}F(V^j,B) = F(W,B)$. Since $\varphi_t$ is the identity on a neighborhood of $\R^n\setminus B$, we have $\|W_t\|(\partial B) = 0$ for all $|t|<\epsilon$, 
  $$F(V^j) - F(V^j_t) = F(V^j,B) - F(V^j_t,B),$$
  and
  $$F(W) - F(W_t) = F(W,B) - F(W_t,B).$$

  Therefore, $\lim_{j\to \infty}F(V^j_t,B) = F(W_t,B)$ for all $|t|<\epsilon$ and
  $$\lim_{j\to \infty} F(V^j) - F(V^j_t) = \lim_{j\to \infty} F(V^j, B) - F(V^j_t,B) = F(W,B) - F(W_t,B) \stackrel{\eqref{integro1}}{\ge} Ct.$$ 
  Since $A^j$ is a minimizing sequence for $\FE_F(S;\R)$ and $F(A^j)=F(V^j)$, this implies that for all $|t|<\epsilon$
  \begin{equation}\label{contr}
       \lim_{j\to \infty} F(V^j_t) \le \lim_{j\to \infty} F(V^j) - Ct = \FE_F(S;\R) - Ct.
  \end{equation}
  Since $\varphi_s$ is a diffeomorphism of $\R^n\setminus \supp(S)$ into itself, the rectifiable current $A^j_t:=(\varphi_t)_\sharp A^j$ satisfies $\partial A^j_t=S$ and $F(A^j_t)=F(V^j_t)$. Thus, there are $j$ and $t$ such that $F(A^j_t)<\FE_F(S;\R)$, which is a contradiction. Therefore, $W$ is stationary away from $\supp(S)$.
\end{proof}

Thus, one can prove that $\Psi$ does not satisfy the atomic condition by finding a minimizing sequence for $\FE_F$ that converges to a varifold $W$ of the form $(\mathcal H^d\rstr\supp(D))\otimes \mu$, where $\supp \mu$ consists of more than one point, and by blowing up $W$ at a point in the relative interior of $D$.

We give two such constructions. First, when $\Psi$ is not strictly elliptic, we can construct a minimizing sequence by tiling. 
\begin{prop}\label{atco2}
  If $\Psi\in C^1(\Gr(d,n),(0,\infty))$ satisfies the atomic condition, then $\Psi$ is strictly elliptic for rectifiable currents over $\R$.
\end{prop}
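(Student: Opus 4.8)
The plan is to prove the contrapositive: if $\Psi$ is not strictly elliptic, then $\Psi$ violates condition (BC) and hence the atomic condition (by \cite[Lemma 7.2]{DeRosaKolasinski}). Starting from a failure of strict ellipticity, I will build, by a tiling construction in the spirit of \cite[Theorem 8.8]{DeRosaKolasinski}, a minimizing sequence for some $\FE_F(\partial[D];\R)$ whose associated varifolds converge to a varifold $W=(\cH^d\rstr D)\otimes\nu$ with $\supp\nu$ containing more than one plane, and then blow $W$ up at a point in the relative interior of $D$, exactly along the lines indicated after Proposition~\ref{stationarylemma}.

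\emph{Step 1: a minimizing measure different from $\delta_{P_0}$.} A failure of strict ellipticity provides an oriented $d$-plane $P_0$, a unit cube $D\subset P_0$, and a current $A_0\neq[D]$ with $\partial A_0=\partial[D]$ and $F(A_0)\le F([D])=\Psi(P_0)$. Pick a minimizing sequence $\{\hat A^j\}\subset\bR^d(\R^n)$ for $\FE_F(\partial[D];\R)$ (one may take $\hat A^j\equiv A_0$ if $A_0$ is already a minimizer); since $\gamma_{\hat A^j}(\OGr(d,n))=\mass\hat A^j$ stays bounded and $\OGr(d,n)$ is compact, pass to a subsequence with $\gamma_{\hat A^j}\rightharpoonup\mu_0\in\cM(\OGr(d,n))$. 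Applying Stokes' theorem to constant forms gives $\int\omega_P\ud\gamma_{\hat A^j}(P)=\omega_{P_0}$ for all $j$, hence $\int\omega_P\ud\mu_0(P)=\omega_{P_0}$, while weak-$*$ convergence and continuity of $\Psi$ on the compact $\OGr(d,n)$ give $\int\Psi\ud\mu_0=\FE_F(\partial[D];\R)\le\Psi(P_0)$. Moreover $\mu_0\neq\delta_{P_0}$: if $A_0$ is not a minimizer then $\int\Psi\ud\mu_0<\Psi(P_0)$, and if $A_0$ is a minimizer one uses $\hat A^j\equiv A_0$ together with the observation that the only $A\in\bR^d(\R^n)$ with $\partial A=\partial[D]$ and $\gamma_A=\delta_{P_0}$ is $[D]$ itself (by the constancy theorem such an $A$ is carried by planes parallel to $P_0$, positively oriented, with total multiplicity one, hence equals $[D]$). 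Finally $p_\sharp\mu_0$ is not a point mass at $p(P_0)$: otherwise $\mu_0=a\delta_{P_0}+b\delta_{-P_0}$ with $a,b\ge0$, and $\int\omega_P\ud\mu_0=\omega_{P_0}$ forces $a-b=1$ while $\int\Psi\ud\mu_0=(a+b)\Psi(P_0)\le\Psi(P_0)$ (using that $\Psi$ is even) forces $a+b\le1$, so $b=0$, $a=1$, $\mu_0=\delta_{P_0}$, a contradiction.

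\emph{Step 2 (tiling) and Step 3 (blow-up).} For $N\in\N$, subdivide $D$ into $N^d$ congruent subcubes $D_{N,\ell}$ of side $1/N$ and in each place the image of $\hat A^j$ under the orientation-preserving similarity carrying $D$ onto $D_{N,\ell}$; the resulting chains have pairwise disjoint interiors, so their sum $B^{j,N}$ satisfies $\partial B^{j,N}=\partial[D]$ (interior faces cancel), $F(B^{j,N})=F(\hat A^j)$, $\gamma_{B^{j,N}}=\gamma_{\hat A^j}$, and $\supp B^{j,N}$ lies within $N^{-1}\sqrt n\,\diam(\supp\hat A^j)$ of $D$. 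A Riemann-sum computation against $C^0_c(\R^n\times\Gr(d,n))$ shows $\cV(B^{j,N})\to(\cH^d\rstr D)\otimes p_\sharp\gamma_{\hat A^j}$ as $N\to\infty$, so choosing $N=N_j$ large enough and $B^j:=B^{j,N_j}$ produces a minimizing sequence for $\FE_F(\partial[D];\R)$ with $\supp B^j\to D$ in the Hausdorff metric and $\cV(B^j)\to W:=(\cH^d\rstr D)\otimes p_\sharp\mu_0$. By Proposition~\ref{stationarylemma}, $W$ is stationary for $F$ in $\R^n\setminus\partial D$. Fixing $x_0$ in the relative interior of $D$ and dilating about $x_0$, the product structure of $W$ and the flatness of $D$ near $x_0$ give convergence of the dilates to $W_\infty:=(\cH^d\rstr P_0)\otimes p_\sharp\mu_0$; since $\Psi$ depends only on the tangent plane, $F$ is translation- and dilation-invariant, so stationarity scales correctly, and since $x_0\notin\partial D$ the boundary set escapes to infinity. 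Using continuity of $Z\mapsto\delta_\Psi[Z](g)$ under varifold convergence (as in the proof of Proposition~\ref{stationarylemma}), we get $\delta_\Psi[W_\infty](g)=0$ for all $g\in C^1_c(\R^n,\R^n)$. As $\Psi$ satisfies the atomic condition it satisfies (BC), which applied to the nonzero measure $p_\sharp\mu_0$ forces $\supp(p_\sharp\mu_0)=\{p(P_0)\}$, contradicting Step 1. Hence $\Psi$ is strictly elliptic.

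The main obstacle is Step 2: the tiling must be arranged so that simultaneously (i) $\{B^j\}$ is a minimizing sequence, so that Proposition~\ref{stationarylemma} applies; (ii) $\supp B^j$ collapses onto $D$, so the limiting varifold is concentrated on $D$; and (iii) the Grassmannian factor of the limiting varifold is the minimizing measure $\mu_0$ rather than $\delta_{P_0}$. The last point, which is exactly where \emph{strict} ellipticity (rather than mere ellipticity) is used, is what makes the blow-up in Step 3 incompatible with (BC); everything else is either the machinery of Proposition~\ref{stationarylemma} or a routine scaling argument.
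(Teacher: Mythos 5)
Your overall strategy is the same as the paper's (tiling in the spirit of \cite[Theorem 8.8]{DeRosaKolasinski}, then Proposition~\ref{stationarylemma}, blow-up, and (BC)), and your Step 1 and Step 3 are fine --- indeed your barycenter-plus-evenness argument that $p_\sharp\mu_0$ is not a point mass at $p(P_0)$ is a nice self-contained justification of a point the paper asserts more briefly. The genuine gap is in Step 2, and it sits exactly at the point you yourself flag as the main obstacle. The claims $F(B^{j,N})=F(\hat A^j)$, $\gamma_{B^{j,N}}=\gamma_{\hat A^j}$, and $\cV(B^{j,N})\to(\cH^d\rstr D)\otimes p_\sharp\gamma_{\hat A^j}$ are false in general: the rescaled copies of $\hat A^j$ are \emph{not} supported in the cylinders over their subcubes (the support of a competitor can stick out arbitrarily far over $\partial D$ and sideways), so neighboring copies overlap, and where they overlap with opposite orientations the sum of currents cancels. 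By Lemma~\ref{subadditivity} one only gets $\gamma_{B^{j,N}}\le\gamma_{\hat A^j}$ and hence $F(B^{j,N})\le F(\hat A^j)$. The cancellation does not disappear as $N\to\infty$, because each copy still overlaps a bounded-below number of its neighbors at every scale. Concretely, for $d=1$, $n=2$, take $\hat A$ to be the polygonal path $(0,0)\to(2,0)\to(2,1)\to(1,1)\to(1,0)$, which has boundary $\partial[D]$ for $D=[0,1]\times\{0\}$: in the $N$-fold tiling every vertical segment of one copy cancels a vertical segment of a neighboring copy, and $\cV(B^{N})$ converges to $3(\cH^1\rstr D)\otimes\delta_{p(P_0)}$ rather than to $(\cH^1\rstr D)\otimes p_\sharp\gamma_{\hat A}$. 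This is precisely the dangerous scenario: cancellation can push the limiting Grassmannian factor onto $\delta_{p(P_0)}$, in which case the blow-up is perfectly compatible with (BC) and no contradiction results. So as written, point (iii) of your plan is not established.

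What rescues (iii) is not disjointness but minimality, and this is the step your proposal is missing. The paper iterates the tiling ($\tau_i$, rescaling by $2^{-i}$), uses Lemma~\ref{subadditivity} to get that $\gamma_{\tau_i(B)}$ is monotone nonincreasing in $i$, hence converges to some $\mu_B\le\gamma_B$, with $\cV(\tau_i(B))\to(\cH^d\rstr D)\otimes p_\sharp\mu_B$. Then, since each $\tau_i(A^j)$ is itself a competitor for $\FE_F(S;\R)$, one has $\int\Psi\ud\mu_{A^j}\ge\FE_F(S;\R)$, so $\int\Psi\ud(\gamma_{A^j}-\mu_{A^j})\le F(A^j)-\FE_F(S;\R)$; because $\Psi$ is continuous and positive on the compact $\OGr(d,n)$, this yields $\|\gamma_{A^j}-\mu_{A^j}\|_{\TV}\le c\,(F(A^j)-\FE_F(S;\R))\to 0$ along the minimizing sequence, so $\mu_{A^j}\to\mu_0$ and a diagonal choice $i_j\to\infty$ gives a minimizing sequence whose varifolds converge to $(\cH^d\rstr D)\otimes p_\sharp\mu_0$. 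You need this energy-gap argument (or an equivalent one) to control the cancellation; without it the proof does not go through. Two smaller points: your Hausdorff-convergence claim implicitly assumes $\supp\hat A^j$ is compact, which a finite-mass rectifiable current need not satisfy (the paper handles this by approximating with compactly supported currents), and even in the case $\hat A^j\equiv A_0$ with $A_0$ a minimizer, the conclusion $\mu_{A_0}=\gamma_{A_0}$ is true but only \emph{because} of the same energy argument, not because of disjointness.
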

\begin{proof}
  Suppose that $\Psi$ is not strictly elliptic for rectifiable currents. Then there exists $P_0 \in \OGr(d,n)$ such that if $[D]\in \bI^d(P_0)$ is the fundamental class of a unit cube $D\subset P_0$ and $S=\partial [D]$, then either $\FE_F(S;\R)<F([D])$ or $\FE_F(S;\R)=F([D])$ and there is a current $A\in \bR^d(\R^n)$ such that $\partial A=S$ and $F(A) = F([D])$, but $A\ne [D]$.

  If $\FE_F(S;\R)<F([D])$, let $A^j$ be a minimizing sequence for $\FE_F(S;\R)$ such that $F(A^j)<F([D])$. Otherwise, let $A^j=A$ for all $j$. 
  
  Since $\gamma_{A^j}(\OGr(d,n)) = \mass A^j \geq \mass [D]>0$ is bounded as $j\to \infty$, we may pass to a subsequence so that $\gamma_{A^j}$ converges weakly to a measure $\mu \neq 0$ on $\OGr(d,n)$. If $\FE_F(S;\R)<F([D])$, then 
  $$\int \Psi(P)\ud \mu(P) = \lim_{j\to \infty} \int \Psi(P)\ud \gamma_{A^j}(P) =\FE_F(S;\R) < F([D]) = \Psi(P_0),$$
  so, we have that $\supp p_*\mu\ne \{p(P_0)\}$. Otherwise, $\supp p_*\mu = \supp p_*\gamma_A \ne \{p(P_0)\}$, since $A\ne [D]$. In either case, $\supp p_*\mu \ne \{p(P_0)\}$.

  We will use the tiling argument in the proof of \cite[Theorem 8.8]{DeRosaKolasinski} to construct a minimizing sequence whose corresponding varifolds converge to $W:=(\cH^d\rstr D )\otimes p_*\mu$. 

  For a $d$--current $B$ and for $i\in \N$, let $\tau_i(B)$ be the current obtained by first rescaling $B$ by a factor of $\frac{1}{2^i}$, then tiling $2^{id}$ copies of the rescaled $B$ in an $2^i\times \dots \times 2^i$ grid. We arrange the grid so that if $\partial B = S$, then $\partial \tau_i(B)=S$ for all $i$. Then, for $i\ge 0$, we can write
  $$\tau_{i+1}(B) = \sum_{k=1}^{2^{d}} B_k,$$
  where each $B_k$ is a rescaling of $\tau_{i}(B)$ with $\gamma_{B_k} = 2^{-d}\gamma_{\tau_i(B)}$. It follows from Lemma \ref{subadditivity} that
  $$\gamma_{\tau_{i+1}(B)}\le \sum_{k=1}^{2^d} \gamma_{B_k} = \gamma_{\tau_i(B)}.$$
  Therefore, $F(B)\ge F(\tau_i(B))$ for all $i$ and $F(\tau_i(B))$ is nonincreasing in $i$. Likewise, $\mass(B)\ge \mass(\tau_i(B))$ and $\mass(\tau_i(B))$ is nonincreasing in $i$.
  
  Since $\gamma_{\tau_{i}(B)}$ is decreasing, it converges weakly to a measure $\mu_B$ on $\OGr(d,n)$. Furthermore, the varifold $\cV(\tau_{i}(B))$ associated to $\tau_i(B)$ satisfies
  $$\lim_{i\to \infty} \cV(\tau_{i}(B)) = (\cH^d\rstr D) \otimes p_*\mu_B.$$
  This is straightforward to show in the case that $B$ has compact support, and in the general case, it follows from approximating $B$ by compactly supported currents.
  
  We claim that $\mu_{A^j}$ converges to $\mu$. We have $\mu_{A^j}\le \gamma_{A^j}$ and
  $$\int \Psi(P) \ud \mu_{A^j}(P) = \lim_{i\to \infty} \int \Psi(P) \ud \gamma_{\tau_i(A^j)}(P) = \lim_{i\to \infty} F(\tau_i(A^j)) \ge \FE_F(S),$$
  so
  $$\int \Psi(P) \ud (\gamma_{A^j}-\mu_{A^j})(P) = F(A^j) - \int \Psi(P) \ud \mu_{A^j}(P) \le F(A^j) - \FE_F(S).$$
  Since $\OGr(d,n)$ is compact and $\Psi>0$, there is a $c>0$ such that
  $$\|\gamma_{A^j}-\mu_{A^j}\| \le c(F(A^j) - \FE_F(S)),$$
  and since $A^j$ is a minimizing sequence, 
  $$\lim_{j\to\infty} \gamma_{A^j}-\mu_{A^j} = 0.$$
  Therefore, $\lim_{j\to \infty} \mu_{A^j} = \lim_{j\to\infty} \gamma_{A^j} = \mu$.

  Thus, in the sense of varifolds
  $$\lim_{j\to \infty} \lim_{i\to \infty} \cV(\tau_{i}(A^j)) = \lim_{j\to\infty} (\cH^d\rstr D )\otimes p_*\mu_{A^j} = (\cH^d\rstr D )\otimes p_*\mu=W.$$
  By diagonalizing, there is a sequence $i_j\to \infty$ such that $\lim_{j\to \infty}\cV(\tau_{i_j}(A^j)) = W$, and since $F(\tau_{i_j}(A^j))\le F(A^j)$, then $\tau_{i_j}(A^j)$ is a minimizing sequence. By Proposition~\ref{stationarylemma}, $W=(\cH^d\rstr D) \otimes p_*\mu$ is a stationary varifold away from $\supp(S)$, hence the blowup $\tilde W:=(\cH^d\rstr p(P_0)) \otimes p_*\mu$ satisfies $\delta_\Psi[\tilde W]=0$, but $\supp p_*\mu \ne \{p(P_0)\}$, so $F$ does not satisfy the atomic condition.
\end{proof}

\begin{remark}
    We observe that Proposition \ref{atco2} should be compared with \cite[Theorem A and Theorem 8.8]{DeRosaKolasinski}. In \cite[Theorem A and Theorem 8.8]{DeRosaKolasinski} the atomic condition is shown to imply the strict ellipticity for rectifiable sets, while in Proposition \ref{atco2} we consider the strict ellipticity for rectifiable currents over $\R$. We have also  slightly simplified the proof; in \cite{DeRosaKolasinski}, one needed to solve the anisotropic Plateau problem, but in fact, Proposition \ref{stationarylemma} allows us to work directly on a minimizing sequence, avoiding this step. A similar simplification can be applied in \cite[Theorem A and Theorem 8.8]{DeRosaKolasinski}.
\end{remark}

Second, when $\Psi$ is not strictly polyconvex, we can use Theorem \ref{thm:construct} to construct a minimizing sequence.
\begin{proof}[Proof of Theorem \ref{thm:main-atomic2}]
  Suppose that $\Psi$ is not strictly polyconvex. Then there are an oriented $d$--plane $P_0\subset \R^n$ and a measure $0\neq \mu\in \mathcal{M}(\OGr(d,n))$ such that $\int \omega_P \ud \mu(P) = \omega_{P_0}$, $\mu\neq\delta_{P_0}$, and
  $$\int \Psi(P)\ud \mu(P)\leq \Psi(P_0).$$
  First we observe that $\supp p_*\mu \ne \{p(P_0)\}$, otherwise we would have $\supp \mu = \{P_0,-P_0\}$, which by $\int \omega_P \ud \mu(P) = \omega_{P_0}$ implies that $\mass \mu >1$, which in turn implies the following contradiction
  $$\int \Psi(P)\ud \mu(P)=\Psi (P_0) \mass \mu > \Psi(P_0).$$

  Let $D$ be a unit $d$--cube in $P_0$ and let $S=\partial [D]$. By the compactness of $\OGr(d,n)$, we may suppose that $\mu$ minimizes $\int \Psi(P)\ud \mu(P)$, i.e., 
  $$\int \Psi(P)\ud \mu(P) = \inf \left\{\int \Psi(P)\ud \nu(P) : \nu\in \mathcal{M}(\OGr(d,n)), \int \omega_P \ud \nu(P) = \omega_{P_0}\right\},$$
  and by Theorem~\ref{thm:inf-formula}, we have $\FE_F(S;\R) = \int \Psi(P)\ud \mu(P)$.

  By Theorem \ref{thm:construct}, there is a sequence $\{A^j\}_{j\in \N}\subset \bR^d(\R^n)$ such that
  $$\partial A^j=S \quad \forall j\in \N \qquad \mbox{ and } \qquad \gamma_{A^j} \rightharpoonup^* \mu \; \mbox{ as } \; j \to \infty,$$
  so that $\{A^j\}_{j\in \N}$ is a minimizing sequence for $\FE_F(S;\R)$. 
  Furthermore, the rectifiable varifolds $\cV(A^j)$ associated to the $A^j$'s converge to $W:= (\mathcal H^d\rstr D)\otimes p_*\mu$ as $j\to \infty$. 
  
  By Proposition~\ref{stationarylemma}, $W$ is stationary away from $\supp(S)$. As in the proof of Proposition~\ref{atco2}, the blowup $\tilde W:=(\mathcal H^d \rstr p(P_0)) \otimes p_*\mu$ satisfies $\delta_\Psi[\tilde W]=0$, so, since $\supp p_*\mu \ne \{p(P_0)\}$, $\Psi$ violates (BC).
\end{proof}

{\bf Data Availability Statement.} No data was used for the research described in the
article.

\bibliographystyle{plain}

\end{document}